%% file: main.tex
\documentclass[preprint,12pt]{elsarticle}

\usepackage[a4paper,margin=3cm]{geometry}
\usepackage{amsmath,amssymb,amsthm,mathtools}
\usepackage{hyperref}
\usepackage{enumitem}
\usepackage{graphicx}
\usepackage{tikz} 
\usepackage[nameinlink,noabbrev]{cleveref}
\usepackage{ dsfont }
\usepackage{subcaption}
\usepackage{tikz}
\usetikzlibrary{calc}
\usetikzlibrary{arrows.meta}
\usepackage{float}

\theoremstyle{plain}
\newtheorem{theorem}{Theorem}[section]
\newtheorem{proposition}[theorem]{Proposition}
\newtheorem{lemma}[theorem]{Lemma}
\newtheorem{corollary}[theorem]{Corollary}

\theoremstyle{definition}
\newtheorem{definition}[theorem]{Definition}

\theoremstyle{remark}

\begin{document}

\begin{frontmatter}

\title{Generating Functions and the Entropy Hierarchy of Strongly Connected Digraphs}

\author{Rostislav Klech}
\ead{Rostislav.Klech@math.slu.cz}
\affiliation{organization={Mathematical Institute in Opava, Silesian University in Opava},
            addressline={Na Rybníčku 626},
            city={Opava},
            postcode={746 01},
            country={Czech Republic}}

\begin{abstract}
We present a generating-function approach to the topological entropy of finite strongly connected digraphs, working throughout with digraphs in which loops are allowed but multiple edges are excluded. Using dominant singularities of path-generating functions, we recover by new analytic--combinatorial arguments the known first and second positive-entropy minimizers from the previously studied loopless spectral setting, and extend the corresponding extremal statements to the present framework.

For the class $\mathcal{SC}_{m+1}(m)$ of strongly connected digraphs with $m$ vertices and $m+1$ edges, we introduce a unified $(t,k_1,k_2)$-butterfly parametrization. The entropy of $\mathcal{B}^{\,t}_{k_1,k_2}$ depends only on $(k_1,k_2)$ and is determined by the unique root $R\in(0,1)$ of $1-z^{k_1}-z^{k_2}=0$ via $h=-\ln R$.

This parametrization yields a detailed entropy hierarchy within $\mathcal{SC}_{m+1}(m)$. We introduce the Pyramidal Entropy Diagram, determine the entropy order completely for $m\leq 7$, and identify its first structural bifurcation at $m=8$. We further establish maximal stable initial and terminal segments consisting of ten entropy minima and two entropy maxima, respectively, and derive an explicit formula for the minimum order required to realize a positive entropy not exceeding a prescribed threshold.
\end{abstract}

\begin{keyword}
Topological entropy \sep Strongly connected digraphs \sep Generating functions \sep Bicyclic digraphs \sep Topological polynomial

\MSC[2020] 05C20 \sep 37B40 \sep 05A15 \sep 05C35 
\end{keyword}

\end{frontmatter}

\section{Introduction}

For a finite directed graph (digraph) $G$, one may associate the
(one-sided or two-sided) edge shift of finite type, whose admissible
words are precisely the directed paths in $G$; see Lind and
Marcus~\cite{LindMarcus1995} for background. In this setting, the topological
entropy is determined by the asymptotic growth of the number $p(n)$
of directed paths of length $n$, namely
\[
h(G)=\limsup_{n\to\infty}\frac{1}{n}\ln p(n).
\]

A classical route to computing $h(G)$ proceeds via the adjacency
matrix and Perron--Frobenius theory. In particular, if $A$ is the
adjacency matrix of a strongly connected digraph $G$, then
\[
h(G)=\ln \rho(A),
\]
where $\rho(A)$ denotes the spectral radius of $A$. In this paper,
we develop an alternative analytic--combinatorial framework based on
generating functions and path decompositions. We work throughout
with finite digraphs in which loops are allowed, while multiple edges
are excluded, and carry out the analysis from the foundational order
$m=2$.

Our starting point is the ordinary generating function
\[
P(z)=\sum_{n\geq 0}p(n)z^n,
\]
together with vertex-based generating functions $P_i(z)$ obtained
from the natural recurrence ``a path continues by choosing a
successor.'' This yields a system of functional equations that can
be solved in terms of rational functions. Using Pringsheim's theorem
for power series with nonnegative coefficients, we connect the
entropy to the analytic singularity structure of $P(z)$. More
precisely, for a strongly connected digraph $G$,
\[
h(G)=-\ln R,
\]
where $R$ is the radius of convergence of $P(z)$. Consequently,
$R=\rho(A)^{-1}$. Thus, determining the entropy reduces to identifying
the dominant positive singularity of the relevant rational generating
function, which we encode by a suitable topological polynomial and
the corresponding topological equation. For background on generating
functions and their analytic use in asymptotic enumeration, we refer
to Wilf~\cite{Wilf1994Gfology} and Flajolet and Sedgewick~\cite{FlajoletSedgewick2009}. For general
digraph terminology and standard facts, see Bang-Jensen and
Gutin~\cite{BangJensenGutin2009}.

The first extremal problem considered in this paper is the
minimization of positive topological entropy among strongly connected
digraphs of a fixed order $m$. Since the directed cycle is the unique
strongly connected digraph with zero entropy, this problem corresponds,
in the adjacency-matrix setting, to determining the second smallest
spectral radius among strongly connected digraphs of order $m$. Lin
and Shu~\cite{LinShu2012} studied the corresponding extremal problem within the
class of strongly connected bicyclic digraphs. Li and Zhou~\cite{LiZhou2015}
subsequently identified the digraphs attaining the second, third, and
fourth smallest spectral radii among all strongly connected digraphs
of order $m\geq 4$, in the setting without loops and multiple edges.
For further results on adjacency spectra of strongly connected
digraphs, see also~\cite{HONG201493, LIN20123663}.

In the terminology introduced below, the digraphs attaining the
smallest and second smallest positive topological entropy are
\[
\mathcal{B}^{\,m-1}_{m-1,m}
\qquad\text{and}\qquad
\mathcal{B}^{\,m-2}_{m-1,m-1},
\]
respectively. For $m\geq 4$, these correspond to the previously known
second and third spectral-radius minimizers. We recover both extremal
results, together with their uniqueness, by a generating-function
argument that does not use adjacency matrices or Perron--Frobenius
theory. Thus, in the previously studied loopless setting, the novelty
of these results lies in the proof rather than in the identification
of the extremal digraphs. Our formulation additionally allows loops
and covers the small orders beginning with $m=2$.

To describe the near-minimal edge regime, we introduce the
$(t,k_1,k_2)$-butterfly notation. The underlying structures are closely
related to the classical $\theta$- and $\infty$-digraphs used in the
spectral literature. More precisely,
\[
\mathcal{B}^{\,1}_{k_1,k_2}\cong \infty(k_1,k_2),
\]
whereas, for $t\geq 2$,
\[
\mathcal{B}^{\,t}_{k_1,k_2}
\cong
\theta(k_1-t,k_2-t,t-2).
\]
The butterfly parametrization is particularly convenient for the
generating-function approach: the parameters $k_1$ and $k_2$ directly
record the lengths of the two distinguished first-return paths, while
$t$ records the size of their common vertex set.

We show that every digraph in the class $\mathcal{SC}_{m+1}(m)$ of
strongly connected digraphs with $m$ vertices and $m+1$ edges admits
such a butterfly representation. For a butterfly digraph
$\mathcal{B}^{\,t}_{k_1,k_2}$, the entropy is determined by the unique root
$R\in(0,1)$ of
\[
1-z^{k_1}-z^{k_2}=0
\]
through the relation
\[
h\left(\mathcal{B}^{\,t}_{k_1,k_2}\right)=-\ln R.
\]
Consequently, the entropy depends only on the pair $(k_1,k_2)$ and is
independent of the intersection parameter $t$. The generating-function
viewpoint therefore provides additional structural information: the
topological polynomial can be read directly from the butterfly
parameters, and entropy comparisons reduce to comparisons within the
elementary polynomial family
\[
1-z^{k_1}-z^{k_2}.
\]

This reduction allows us to study the internal entropy hierarchy of
$\mathcal{SC}_{m+1}(m)$ beyond the first few spectral-radius minimizers
previously identified in the literature. We introduce the Pyramidal
Entropy Diagram, whose vertices represent butterfly types
$(k_1,k_2)$ and whose directed edges encode strict entropy growth.
The induced reachability relation defines a partial order compatible
with entropy. We show that the entropy ordering is total for
$m\leq 7$, whereas at $m=8$ the relative order of certain butterfly
types changes. We further determine stable initial and terminal
segments consisting of ten entropy minima and two entropy maxima.
These segments are maximal in the sense that their relative ordering
is independent of $m$, while the ordering of further butterfly types
may vary with the number of vertices.

Finally, we consider the inverse problem of determining how many
vertices are required to realize a prescribed small positive entropy.
Using the extremal butterfly family, we derive the explicit formula
\[
m_{\min}(\varepsilon)
=
\max\left\{
2,
\left\lceil
\frac{\ln\left(e^\varepsilon+1\right)}{\varepsilon}
\right\rceil
\right\}
\]
for the smallest order of a strongly connected digraph whose positive
topological entropy does not exceed $\varepsilon$.

\medskip
\noindent\textbf{Organization of the paper.}
Section~2 fixes notation and basic graph-theoretic terminology. In Section~3 we present the
generating-function method and establish the entropy--radius-of-convergence principle for
strongly connected digraphs, together with tools based on first-return decompositions.
Section~4 proves the global minimum results and develops the butterfly description and its
entropy consequences. Section~5 constructs the Pyramidal Entropy Diagram and derives the
resulting ordering and extremal statements within $\mathcal{SC}_{m+1}(m)$.

\section{Preliminaries and notation}
We work with finite directed graphs $G=(V,E)$ with $|V|=m$.
Loops are allowed, while multiple edges are not.
A \emph{directed path} or \emph{path} of length $n$ from $u$ to $v$ is a series of vertices
$P_{uv}=v_0\dots v_n$ with $v_0=u$ and $v_n=v$ such that $(v_{i-1},v_i)\in E$
for all $i=1,\dots,n$.
The path is \emph{closed} if $u=v$ and \emph{simple} if its vertices are pairwise distinct
(except that in the closed case we allow $v_0=v_n$). A closed path $P_{v_0v_n}=v_0v_1\dots v_n$ is called a \emph{first-return path to $v$}
if $v_0=v_n=v$ and $v_i\neq v$ for all $i=1,\dots,n-1$. The \emph{length of the path} $P_{uv}$ is defined as the number of its vertices minus one, i.e.:
\[ 
    |P_{uv}|=|v_0\dots v_n|:=n+1-1=n.
 \]
If $P_{u_0u_n}=u_0\dots u_n$ and  $P_{v_0v_m}=v_0\dots v_m$ with $u_n=v_0$, then we define their concatenation as
\[ 
    P_{u_0u_n}\circ P_{v_0v_n}:=u_0\dots u_nv_1\dots v_m, 
 \]
and the length as $|P_{u_0u_n}\circ P_{v_0v_m}|:= |P_{u_0u_n}|+|P_{v_0v_m}|$. Vertex $u$ is \emph{reachable} from vertex $v$ if there is a directed path from $v$ to $u$. The digraph $G$ is \emph{strongly connected} if for any $u,v\in V$ there exists a directed path from $u$ to $v$. We define the \emph{out-degree} of $v\in V$ by
\[
\deg^+(v)\;=\;\bigl|\{\,u\in V : (v,u)\in E\}\bigr|,
\]
and the \emph{in-degree} of $v$ by
\[
\deg^-(v)\;=\;\bigl|\{\,u\in V : (u,v)\in E\}\bigr|.
\]
\emph{The distance between vertices} $u$ and $v$ (precisely in this order) , $\operatorname{dist}(u,v)$, is the length of the shortest path $P_{uv}$. 
If $G = (V(G), E(G))$ and $H = (V(H), E(H))$ are two digraphs, we say that $G$ and $H$ are \emph{isomorphic}, written $G \cong H$, if there exists a bijection
\[
  \phi \colon V(G) \to V(H)
\]
such that for all $u, v \in V(G)$,
\[
  (u, v) \in E(G)
  \quad\Longleftrightarrow\quad
  \bigl(\phi(u), \phi(v)\bigr) \in E(H).
\]
If $V(H)\subseteq V(G)$ and $E(H)\subseteq E(G)$, then $H$ is a \emph{subgraph} of $G$. We also denote by $\mathcal{C}_m$ the digraph $G=(V,E)$, for which
\begin{align*}
    V &= \{v_1, v_2, \dots, v_m\}, \\
    E &= \big\{(v_i, v_{i+1}) : i \in \{1, 2, \dots, m-1\} \big\} \cup \big\{(v_m, v_1)\big\}.
\end{align*}

Let $p(n)$ denote the total number of paths of length $n$ in $G$, i.e.
\[
p(n):=\bigl|\{(v_0,\dots,v_n)\in V^{n+1} : (v_{i-1},v_i)\in E \text{ for } i=1,\dots,n\}\bigr|.
\]
We define the corresponding generating function by
\[
P(z):=\sum_{n\ge 0} p(n)\,z^n.
\]
The topological entropy of $G$ is defined by
\[
h(G):=\limsup_{n\to\infty}\frac{1}{n}\ln\big(p(n)\big).
\]
Two digraphs $G$ and $H$ are called \textit{isentropic} if $h(G) = h(H)$.

\section{The generating-function method}
In this section, we establish the analytic framework for computing topological entropy by identifying it as the negative logarithm of the radius of convergence of path-generating functions. By applying Pringsheim’s theorem, we reduce the study of dynamical complexity to the identification of the dominant singularity of rational functions derived from vertex-based recurrence relations. This methodology utilizes first-return decompositions to provide a purely combinatorial alternative to traditional spectral arguments.

\begin{theorem}[Pringsheim’s theorem]\label{pring}
Let
\[
f(z)=\sum_{n=0}^{\infty} a_n z^n
\]
be a power series with $a_n\ge 0$ for all $n$ and radius of convergence $R\in(0,\infty)$.
Then $z=R$ is a singular point of $f$.
\end{theorem}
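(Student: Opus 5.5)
We argue by contradiction: assume $z=R$ is a regular point of $f$. Then $f$, which is analytic in the open disc $|z|<R$, extends analytically to some open disc $D(R,\delta)$ centred at $R$. The idea is to re-expand $f$ around a point $r\in(0,R)$ close to $R$. Nonnegativity of the coefficients will let us swap summations freely, and this will produce convergence of the original series at a point beyond $R$, contradicting the definition of $R$.

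\textbf{Step 1 (choice of centre and radius of the Taylor expansion).} Fix $r\in(0,R)$ with $R-r<\delta/2$. The union of the disc $|z|<R$ and $D(R,\delta)$ contains the closed disc $|z-r|\le \rho$ for some $\rho>R-r$; concretely, any $\rho$ with $R-r<\rho<\delta/2$ works, after shrinking $\delta$ if necessary. Then the Taylor series of the analytic continuation around $r$,
\[
\sum_{k\ge 0}\frac{f^{(k)}(r)}{k!}(z-r)^k,
\]
has radius of convergence at least $\rho$. This is the one step needing care, since we must check that the closed disc lies inside the domain of analyticity. I expect this geometric check, together with the standard fact that the Taylor radius is at least the distance to the boundary of the domain, to be the main technical point.

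\textbf{Step 2 (evaluation of the derivatives at $r$).} Because $r$ lies inside the original disc of convergence, we may differentiate term by term:
\[
\frac{f^{(k)}(r)}{k!}=\sum_{n\ge k}\binom{n}{k}a_n r^{n-k}.
\]
All these quantities are nonnegative.

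\textbf{Step 3 (the contradiction).} Take the real point $x=r+\rho'$ with $R-r<\rho'<\rho$, so that $x>R$. By Step 1 the re-expanded series converges at $x$, which gives
\[
\sum_{k\ge0}\sum_{n\ge k}\binom{n}{k}a_n r^{n-k}(\rho')^k<\infty.
\]
All terms are nonnegative, so Tonelli's theorem allows us to swap the order of summation. Summing over $k$ first, the binomial theorem yields
\[
\sum_{n\ge0}a_n (r+\rho')^n=\sum_{n}a_n x^n<\infty.
\]
Hence the original power series converges at the point $x>R$. Since convergence at $x$ forces absolute convergence on the open disc of radius $x$, the radius of convergence is at least $x>R$. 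This contradicts the assumption that the radius of convergence equals $R$, so $z=R$ must be a singular point of $f$.

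The positivity hypothesis $a_n\ge 0$ is used exactly once, in Step 3, where it justifies the rearrangement. Without it, the rearranged double series need not converge absolutely and the argument would fail.
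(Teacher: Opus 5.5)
Your proof is correct. It is the standard textbook proof of Pringsheim's theorem: assume $f$ continues analytically to a disc around $R$, re-expand at a point $r<R$ using the nonnegative coefficients $\sum_{n\ge k}\binom{n}{k}a_n r^{n-k}$, and rearrange the nonnegative double series to get convergence of $\sum_n a_n x^n$ at some $x>R$. The paper gives no proof of this statement; it quotes Pringsheim's theorem as a classical tool and refers to Flajolet--Sedgewick for background. So there is no argument in the paper to compare yours with.

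One simplification: Step 1, which you flag as the main technical point, needs no care and no shrinking of $\delta$. For $|z-r|\le\rho$, the triangle inequality gives $|z-R|\le|z-r|+(R-r)<\delta/2+\delta/2=\delta$. Hence the closed disc lies entirely inside $D(R,\delta)$. The Taylor series of the continuation at $r$ therefore has radius at least $\delta-(R-r)>\rho$.

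The point you should state explicitly is a different one. Since $R-r<\delta$, the point $r$ lies in the open set $D(0,R)\cap D(R,\delta)$, where the continuation agrees with $f$. This is what guarantees that the continuation's Taylor coefficients at $r$ equal $f^{(k)}(r)/k!$, which you then compute by term-by-term differentiation in Step 2.
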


\begin{proposition}\label{prop:entropy-radius}
Let $G$ be a strongly connected digraph. Then
\[
h(G)=-\ln R,
\]
where $R$ is the radius of convergence of the generating function $P(z)$.
\end{proposition}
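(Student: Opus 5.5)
The plan is to combine the Cauchy--Hadamard formula with the definition of $h(G)$ as a $\limsup$. Since $p(n)\ge 0$, the radius of convergence of $P(z)=\sum p(n)z^n$ is given by
\[
\frac{1}{R}=\limsup_{n\to\infty} p(n)^{1/n},
\]
provided we interpret $1/0=\infty$ and $1/\infty=0$. Taking logarithms (the logarithm is continuous and increasing on $(0,\infty)$, so it commutes with $\limsup$) gives
\[
-\ln R=\limsup_{n\to\infty}\frac{1}{n}\ln p(n)=h(G).
\]
This would already be the whole argument, except for the degenerate cases. We have to check that $R\in(0,\infty)$, and that $p(n)>0$ for all $n$, so that $\ln p(n)$ is defined.

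The first step therefore uses strong connectivity to show $p(n)\ge 1$ for every $n$. In a strongly connected digraph on $m\ge 1$ vertices, either there is a loop or $m\ge 2$. In both cases every vertex has out-degree at least $1$: a vertex $v$ has a path to another vertex, or, if $m=1$, the single vertex carries a loop (we assume $E\neq\emptyset$, as the single isolated vertex is excluded or treated as trivial). Starting at any vertex and always choosing a successor, we obtain a path of every length $n$, so $p(n)\ge m\ge 1$. Hence $\limsup p(n)^{1/n}\ge 1$, which gives $R\le 1<\infty$.

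The second step is the upper bound $p(n)\le m\cdot \Delta^n\le m\cdot m^n$, where $\Delta=\max_v\deg^+(v)\le m$ because multiple edges are excluded. A path of length $n$ is determined by its start vertex and by $n$ successive choices, each among at most $m$ successors. Then $\limsup p(n)^{1/n}\le m$, so $R\ge 1/m>0$. With $R\in(0,1]$ established, the displayed identity applies directly, and $h(G)=-\ln R\in[0,\ln m]$.

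No step is a serious obstacle. The only point requiring care is passing the logarithm through the $\limsup$, which is justified because $t\mapsto\ln t$ is continuous and nondecreasing on $[1,m]$, where all values $p(n)^{1/n}$ lie. The remaining delicate point is handling the trivial one-vertex digraph without edges, which I would exclude by convention. Pringsheim's theorem (Theorem~\ref{pring}) is not needed for this proposition itself. It enters afterwards, to identify $R$ as the smallest positive singularity of the rational function $P(z)$.
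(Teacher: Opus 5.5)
Your argument is correct and is the one the paper implicitly relies on: the proposition is stated there without a separate proof, as a direct consequence of the Cauchy--Hadamard formula. The bounds $m\le p(n)\le m\cdot m^{n}$, which give $R\in[1/m,1]$, appear in Lemma~\ref{lem:R_bounds} exactly as you derive them.

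One minor slip: the individual terms $p(n)^{1/n}$ need not lie in $[1,m]$; for the complete digraph with all loops, $p(n)^{1/n}=m^{1+1/n}$. Only their $\limsup$ lies in $[1,m]$. This does not affect the argument, since continuity and monotonicity of $\ln$ on $(0,\infty)$, together with $0<\limsup_{n} p(n)^{1/n}<\infty$, are all you need to pass $\ln$ through the $\limsup$.
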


Now we describe how to construct the system of recurrences associated with a digraph 
$G$.
Let $G$ be a strongly connected digraph with $m$ vertices. We define $P(z)$ as the generating function:
\[
P(z) = \sum_{n=0}^{\infty} p(n)\,z^n.
\] 

Every path of length $n$ starts at one of the vertices of the graph. Let $p_i(n)$ denote the number of paths of length $n$ that start at vertex $v_i$. We define the generating function for the number of paths in the digraph that start at vertex $v_i$ as:
\begin{equation}\label{rekurence}
    P_i(z) = \sum_{n=0}^{\infty} p_i(n)\,z^n, \quad p_i(0)=1.
\end{equation}

Let $S(v_i)$ be the \emph{set of indices of successors of vertex} $v_i$. In other words, 
\[ 
    S(v_i)=\Big\{j\in \{1,2,...,m\}:(v_i,v_j)\in E\Big\}.
 \]
The number of paths of length $n$ starting at vertex $v_i$ can then be decomposed as the sum of the number of paths of length $n-1$ starting at its successors:
\begin{equation}\label{rozklad_gen}
    p_i(n) = \sum_{j \in S(v_i)} p_j(n-1), \quad n\geq1.
\end{equation}

Multiplying (\ref{rozklad_gen}) by $z^n$ and summing over all $n \geq 1$, we obtain:

\begin{align*}
    \sum_{n=1}^{\infty} p_i(n) z^n &=  \sum_{n=1}^{\infty}\sum_{j \in S(v_i)} p_j(n-1)z^n \\
    &=\sum_{j \in S(v_i)}z\sum_{n=0}^{\infty} p_j(n)z^{n} \\
    &=\sum_{j \in S(v_i)} z P_j(z).
\end{align*}
Thus, by substituting back into \eqref{rekurence}, for each $i$ we obtain
\[
P_i(z) = 1 + \sum_{j \in S(v_i)} z\,P_j(z).
\]

 The overall generating function $P(z)$ can then be expressed as the sum of all the partial generating functions $P_i(z)$:
 \[ \label{sum:gen_func}
     P(z) = \sum_{i=1}^{m} P_i(z).
  \]

While the following Lemma~\ref{gen_func} and its Corollary~\ref{cor:one_vertex_radius} are equivalent to a well-known result \cite[Theorem V.7]{FlajoletSedgewick2009}, we provide an alternative, purely combinatorial proof. This ensures our framework remains self-contained and strictly avoids matrix methods, relying exclusively on path structures in digraphs.

\begin{lemma}\label{gen_func}
    Let $G$ be a strongly connected digraph with $m$ vertices. For each vertex $v_i$, let $P_i(z)$ be the generating function for the number of paths in $G$ that starts at $v_i$. Then all $P_i(z)$ have the same radius of convergence. 
\end{lemma}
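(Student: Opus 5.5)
The plan is to compare the coefficient sequences $p_i(n)$ and $p_j(n)$ directly through path concatenation, so that no matrix methods are needed. Denote by $R_i$ the radius of convergence of $P_i(z)$. The aim is to show $R_i\le R_j$ for every ordered pair of vertices $(v_i,v_j)$. Exchanging the roles of $i$ and $j$ then gives equality.

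\emph{Step 1 (an injection of paths).} Fix $v_i,v_j$. Since $G$ is strongly connected, there is a path $Q$ from $v_i$ to $v_j$; put $d:=|Q|=\operatorname{dist}(v_i,v_j)$, so $0\le d\le m-1$. Every path $\gamma$ of length $n$ starting at $v_j$ yields the concatenation $Q\circ\gamma$, a path of length $n+d$ starting at $v_i$. The map $\gamma\mapsto Q\circ\gamma$ is injective, because $\gamma$ is recovered by deleting the first $d$ vertices. Hence
\[
p_j(n)\le p_i(n+d)\qquad\text{for all } n\ge 0 .
\]

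\emph{Step 2 (transfer to radii).} Take $0<r<R_i$. Then $\sum_n p_i(n) r^n<\infty$, and
\[
\sum_{n\ge0} p_j(n) r^n\le \sum_{n\ge 0}p_i(n+d) r^n = r^{-d}\sum_{k\ge d} p_i(k) r^k<\infty .
\]
So $P_j$ converges at every $r\in(0,R_i)$, and since the coefficients are nonnegative this gives $R_j\ge R_i$. The argument also covers $R_i=\infty$ and the degenerate issue at $r=0$ needs no treatment. Alternatively one can argue via $\limsup p_j(n)^{1/n}\le\limsup p_i(n+d)^{1/n}=\limsup p_i(k)^{1/k}$, using $(n+d)/n\to1$ together with the Cauchy--Hadamard formula. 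Some care is needed if $p_i(k)^{1/k}$ is unbounded, but that cannot happen, since $p_i(n)\le m^n$.

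\emph{Step 3 (symmetry).} Applying Steps 1--2 with $i$ and $j$ swapped, using a path from $v_j$ to $v_i$, gives $R_i\ge R_j$. Hence all $R_i$ are equal. Additionally, $1\le p_i(n)\le m^n$ shows the common radius lies in $[1/m,1]$. Here $p_i(n)\ge 1$ because in a strongly connected digraph every vertex has a successor (for $m\ge1$ with at least one edge). This bound is convenient for later use with Pringsheim's theorem.

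No step should be a real obstacle. The only point needing care is Step 2, passing from the shifted coefficient inequality to the inequality of radii. I would handle it by the direct comparison of series at a fixed $0<r<R_i$, as above, since this avoids any $\limsup$ manipulations.
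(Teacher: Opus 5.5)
Your proposal is correct and follows essentially the same route as the paper. The paper iterates the recurrence $P_{i_r}(z)\ge 1+zP_{i_{r+1}}(z)$ along a shortest path from $v_i$ to $v_j$ to get $P_i(z)\ge\sum_{l=0}^{k-1}z^l+z^kP_j(z)$ for $z>0$, which is the generating-function form of your injection $p_j(n)\le p_i(n+d)$; it then compares at a positive real point (by contradiction, via divergence at some $r_0\in(R_j,R_i)$, where you argue direct convergence at $r<R_i$) and swaps the roles of $i$ and $j$, exactly as you do.
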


\begin{proof}
Let $G$ be a strongly connected digraph with $m$ vertices. Since the graph is strongly connected, for every pair of vertices $v_i$ and $v_j$ there exists a shortest directed path connecting them. Denote such a path by
\[
v_i = v_{i_0} v_{i_1} \dots v_{i_k} = v_j.
\]

Recall that the generating functions satisfy the recurrence
\[
P_i(z) = 1 + \sum_{l \in S(v_i)} z\,P_l(z).
\]
In general, for every $r \in \{0,1,\dots, k-1\}$ we deduce that
\[
P_{i_r}(z)=1+\sum_{l\in S(v_{i_r})}z\,P_l(z) \geq 1 + z\,P_{i_{r+1}}(z),
\]
for $z>0$. By successive substitution, it follows that
\[
P_{i}(z) = P_{i_0}(z) \geq \sum_{l=0}^{k-1}z^l + z^k\,P_{i_k}(z) = \sum_{l=0}^{k-1}z^l + z^k\,P_j(z).
\]
Denote the radii of convergence of $P_i(z)$ and $P_j(z)$ by $R_i$ and $R_j$, respectively. 

Now, assume for the sake of contradiction that $R_i>R_j$ and choose $r_0\in(R_j,R_i)$.
Then the power series $P_j(r_0)$ diverges, since $r_0$ lies outside the disc of convergence of $P_j$.
Evaluating the inequality
\[
P_i(z)\ge \sum_{l=0}^{k-1} z^l + z^k P_j(z)
\]
at $z=r_0>0$, we obtain that the divergence of $P_j(r_0)$ forces $P_i(r_0)$ to diverge as well.
This contradicts the fact that $P_i$ converges for all $|z|<R_i$.

A similar argument applies if $R_j > R_i$ by considering a shortest directed path from $v_j$ to $v_i$.
Hence, we conclude that
\[
R_i = R_j, \quad \forall i,j\in \{1,2,...,m\}.
\]
\end{proof}

\begin{corollary}\label{cor:one_vertex_radius}
Let $G$ be a strongly connected digraph with $m$ vertices. Then the generating functions
$P_1(z),\dots,P_m(z)$ all have the same radius of convergence, say $R$.
Moreover, the total generating function
\[
P(z)=\sum_{i=1}^m P_i(z)
\]
has radius of convergence $R$ as well. In particular, to determine $R$ (and hence $h(G)$)
it suffices to compute $P_i(z)$ for any fixed vertex $v_i$.
\end{corollary}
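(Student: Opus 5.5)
The first claim, that $P_1(z),\dots,P_m(z)$ share a common radius of convergence $R$, is exactly Lemma~\ref{gen_func}, so I will simply invoke it. The substantive part is showing that $P(z)=\sum_{i=1}^m P_i(z)$ also has radius of convergence $R$. Since $p(n)=\sum_{i=1}^m p_i(n)$ for every $n$, with all terms nonnegative, I will argue coefficientwise.

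For the lower bound on the radius of $P$, take any $0<r<R$. Each $P_i(r)$ converges as a series of nonnegative terms, and a finite sum of convergent nonnegative series converges. Therefore $P(r)$ converges. A power series with nonnegative coefficients that converges at a positive point $r$ converges absolutely on $|z|\le r$, so the radius of $P$ is at least $R$.

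For the upper bound, take $r>R$. Then $P_1(r)$ diverges. Since $p(n)\ge p_1(n)\ge 0$, we have $\sum_n p(n)r^n\ge \sum_n p_1(n)r^n=\infty$, so $P(r)$ diverges and the radius of $P$ is at most $R$. Combining the two bounds, the radius of $P$ equals $R$.

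The final sentence of the corollary then follows. Computing any single $P_i(z)$ and reading off its radius of convergence yields $R$, the radius of $P$. By Proposition~\ref{prop:entropy-radius}, this gives $h(G)=-\ln R$. If one additionally wants to locate $R$ as a singularity of the rational function $P_i$, Theorem~\ref{pring} guarantees that $z=R$ is singular, since the coefficients $p_i(n)$ are nonnegative.

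There is no real obstacle here. The only point requiring care is that the comparison arguments use only nonnegativity of coefficients and evaluation at positive reals. The case $R=\infty$ cannot occur, because $p_i(n)\ge 1$ for all $n$ in a strongly connected digraph (every vertex has a successor), so $R\le 1$. This also puts us within the hypotheses of Theorem~\ref{pring}.
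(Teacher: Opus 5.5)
Your proposal is correct and takes essentially the same route as the paper. The paper also invokes Lemma~\ref{gen_func} and observes that a finite sum of power series with nonnegative coefficients has radius of convergence $\min_i R_i$; your two comparison bounds simply prove that fact explicitly.
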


\begin{proof}
Since $P(z)=\sum_{i=1}^m P_i(z)$ is a finite sum of power series with nonnegative coefficients,
its radius of convergence equals $\min_{1\le i\le m} R_i$, where $R_i$ is the radius of convergence of $P_i(z)$.
By Lemma~\ref{gen_func} we have $R_1=\cdots=R_m=:R$, and therefore $P(z)$ has radius of convergence $R$.
\end{proof}

\begin{lemma}\label{compare}
Let $G$ be a strongly connected digraph and let $v_i\in V$. For $n\ge 0$, let $p_i(n)$ denote the number of paths of length $n$ that start at the vertex $v_i$, and define
\[
P_i(z)=\sum_{n=0}^{\infty} p_i(n)\,z^n.
\]
Let $FR_i$ be a set of pairwise distinct first-return paths at the vertex $v_i$, and for each $fr_j\in FR_i$ let $l_j$ denote the length of $fr_j$. Define
\[
G(z):=\sum_{fr_j\in FR_i} z^{\,l_j}
\]
and
\[
Q(z):=\frac{1}{1-G(z)}.
\]
Let $R_{i}$ and $R_Q$ be the radii of convergence of $P_i(z)$ and $Q(z)$, respectively. Then
\[
R_{i}\le R_Q.
\]
\end{lemma}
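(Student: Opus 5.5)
The plan is a coefficientwise comparison: I will show that $Q(z)$ is dominated, coefficient by coefficient, by a series that converges wherever $P_i(z)$ does. Since $G(z)$ has nonnegative coefficients and $G(0)=0$, the formal expansion
\[
Q(z)=\sum_{k\ge 0} G(z)^k=\sum_{n\ge 0} q(n)z^n
\]
is valid. Here $q(n)$ counts finite sequences $(fr_{j_1},\dots,fr_{j_k})$ of elements of $FR_i$ with $l_{j_1}+\dots+l_{j_k}=n$. So $Q$ is a power series with nonnegative coefficients, and its radius of convergence $R_Q$ is well defined.

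The key step is an injection from these sequences into paths starting at $v_i$. I map $(fr_{j_1},\dots,fr_{j_k})$ to the concatenation $fr_{j_1}\circ\cdots\circ fr_{j_k}$. This is a closed path of length $n$ starting at $v_i$ (the empty sequence gives the trivial path $v_i$), so it is counted by $p_i(n)$.

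The main point to check is injectivity. Each $fr_j$ is a first-return path, so it visits $v_i$ only at its two endpoints. Hence the times at which the concatenated path visits $v_i$ are exactly the partial sums $0, l_{j_1}, l_{j_1}+l_{j_2},\dots$. Given the resulting path, these visits cut it uniquely back into the consecutive blocks. Each block is then a specific path that equals exactly one element of $FR_i$, because the elements of $FR_i$ are pairwise distinct. Therefore $q(n)\le p_i(n)$ for all $n$.

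From $0\le q(n)\le p_i(n)$, for every $0<r<R_i$ we get $\sum_n q(n)r^n\le P_i(r)<\infty$. Thus $Q$ converges on $(0,R_i)$, and since its coefficients are nonnegative it converges on the disc $|z|<R_i$. This gives $R_Q\ge R_i$. (If $R_i=\infty$ the same argument yields $R_Q=\infty$.) I expect the decomposition and uniqueness argument to be the only delicate part; it relies on first-return paths not passing through $v_i$ internally.
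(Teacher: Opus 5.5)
Your proof is correct and follows essentially the same route as the paper. Both arguments let $q(n)$ count sequences of elements of $FR_i$ of total length $n$, identify $Q(z)=\sum_{n\ge 0} q(n)z^n$, and conclude from $q(n)\le p_i(n)$. The differences are minor: you obtain the identity from the geometric series $\sum_{k\ge 0} G(z)^k$ rather than the paper's last-block recurrence, and you explicitly justify the injectivity (cutting the concatenation uniquely at its visits to $v_i$), which the paper only asserts with ``certainly''.
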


\begin{proof}
Define a sequence $q(n)$ as the number of sequences $(fr_{j_1},fr_{j_2},\dots,fr_{j_t})$ such that
\[
l_{j_1}+l_{j_2}+\dots+l_{j_t}=n.
\]
Thus, $q(n)$ counts the number of closed paths of length $n$ with initial vertex $v_i$ that are composed only of elements of $FR_i$. This number is finite: for each fixed $n\in\mathds{N}$ there are only finitely many closed paths of length $n$ in a finite digraph, and hence $q(n)$ is finite.

Moreover, $q(n)$ satisfies the recurrence
\[
q(n)=\sum_{fr_j\in FR_i} q(n-l_j), \qquad q(0)=1,\qquad q(n<0)=0,
\]
analogously to the standard decomposition by the last block. In addition, for a fixed $n\in\mathds{N}$ only those elements $fr_j\in FR_i$ with $l_j\le n$ contribute to the sum, and there are only finitely many such elements. Hence all sums below are, in fact, finite.

Let
\[
Q(z)=\sum_{n=0}^{\infty} q(n)\,z^n.
\]
We show that this $Q(z)$ agrees with the function defined above. Indeed,
\begin{align*}
\sum_{n=1}^{\infty} q(n)\,z^n
&=\sum_{n=1}^{\infty}\sum_{fr_j\in FR_i} q(n-l_j)\,z^n \\
&=\sum_{fr_j\in FR_i}\sum_{n=1-l_j}^{\infty} q(n)\,z^{n+l_j} \\
&=\sum_{fr_j\in FR_i}\sum_{n=0}^{\infty} q(n)\,z^{n+l_j} \\
&=\sum_{fr_j\in FR_i} z^{l_j}\sum_{n=0}^{\infty} q(n)\,z^n \\
&=\sum_{fr_j\in FR_i} z^{l_j} Q(z).
\end{align*}
Therefore,
\[
Q(z)-q(0)=\sum_{fr_j\in FR_i} z^{l_j} Q(z)
\quad\Longrightarrow\quad
Q(z)=\frac{1}{1-\sum_{fr_j\in FR_i} z^{l_j}}.
\]

Finally, we certainly have $p_i(n)\ge q(n)$ for all $n$, and hence
\[
R_{i}\le R_Q.
\]
\end{proof}

\begin{lemma}\label{lem:R_bounds}
Let $G$ be a strongly connected digraph on $m$ vertices and let
\[
P(z)=\sum_{n=0}^{\infty} p(n)\,z^n
\]
be the generating function for the total number of paths of length $n$ in $G$.
Let $R$ be the radius of convergence of $P(z)$. Then
\[
R\in(0,1].
\]
Moreover,
\[
R=1 \Longleftrightarrow G\cong \mathcal{C}_m.
\]
\end{lemma}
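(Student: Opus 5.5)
We prove $R>0$, then $R\le 1$, and finally characterize $R=1$.

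\emph{Step 1: $R>0$.} Each vertex has out-degree at most $m$, since multiple edges are excluded. Hence $p_i(n)\le m^n$ by induction from the recurrence \eqref{rozklad_gen}, so $p(n)\le m^{n+1}$. The series $P(z)$ therefore converges for $|z|<1/m$, giving $R\ge 1/m>0$.

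\emph{Step 2: $R\le 1$.} Fix a vertex $v_i$. By strong connectivity there is at least one first-return path at $v_i$. When $m=1$ this is the loop, which must exist because $G$ is strongly connected with $p(n)$ nontrivial; otherwise it is a path to a successor followed by a shortest path back. Say it has length $l\ge 1$. Apply Lemma~\ref{compare} with $FR_i$ consisting of this single path. Then $Q(z)=1/(1-z^{l})$ has radius of convergence $1$, so $R_i\le 1$. By Corollary~\ref{cor:one_vertex_radius} we get $R=R_i\le 1$.

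\emph{Step 3: characterization of $R=1$.} If $G\cong\mathcal{C}_m$, then every vertex has exactly one successor, so $p_i(n)=1$ for all $n$. Thus $P(z)=m/(1-z)$ and $R=1$.

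For the converse, suppose $G\not\cong\mathcal{C}_m$. We claim some vertex $v_i$ has two distinct first-return paths. Since $G$ is strongly connected, every vertex has out-degree at least $1$. If every vertex had out-degree exactly $1$, the successor map would be a function whose functional graph is strongly connected, hence a single cycle through all $m$ vertices, i.e.\ $G\cong\mathcal{C}_m$. So some vertex $v_i$ has two distinct successors $v_a\neq v_b$ (one of them possibly $v_i$ itself, via a loop). Build two first-return paths: take the edge $v_iv_a$ followed by a shortest path from $v_a$ back to $v_i$, and likewise for $v_b$. A shortest path to $v_i$ visits $v_i$ only at its end, so both are genuine first-return paths, and they are distinct because their second vertices differ. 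Let their lengths be $l_1,l_2\ge1$.

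Lemma~\ref{compare} then gives $R_i\le R_Q$, where $Q(z)=1/(1-z^{l_1}-z^{l_2})$. The coefficients of $Q$ are nonnegative, and $g(x)=1-x^{l_1}-x^{l_2}$ is continuous and strictly decreasing on $[0,1]$ with $g(0)=1$ and $g(1)=-1$. So $g$ has a root $r\in(0,1)$, and $Q$ has a pole there, giving $R_Q\le r<1$. To justify this rigorously without invoking more than we need: for real $0<x<r$ the partial sums of $Q(x)=\sum_k (x^{l_1}+x^{l_2})^k$ blow up as $x\to r^-$. Hence $Q$ cannot converge at any $z$ with $|z|>r$, since convergence at such a $z$ would give absolute convergence, and thus boundedness, on $[0,r]$. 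Therefore $R=R_i\le r<1$.

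The main obstacle is the combinatorial claim that a non-cycle strongly connected digraph has a vertex with two distinct first-return paths, together with the care needed when loops are present. Handling loops as first-return paths of length $1$ resolves the latter, and the out-degree argument above settles the former.
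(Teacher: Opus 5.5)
Your proof is correct and follows the paper's argument. Both use the bound $p(n)\le m^{n+1}$ for $R>0$. For $G\not\cong\mathcal{C}_m$, both take two first-return paths at a vertex of out-degree $2$ and feed them into Lemma~\ref{compare}, using the root of $1-z^{l_1}-z^{l_2}$ in $(0,1)$.

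The only deviation is in the step $R\le 1$: you obtain it from Lemma~\ref{compare} with a single first-return path, whereas the paper uses the direct bound $p(n)\ge m$. You also usefully supply details the paper leaves implicit: the functional-graph argument producing a vertex of out-degree $2$, the distinctness of the two first-return paths, and why the pole at $r$ forces $R_Q\le r$.
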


\begin{proof}
Since $G$ is strongly connected, every vertex has out-degree at least $1$. Hence, for each $n\ge 0$ there exists at least one directed path of length $n$ starting at every vertex, and therefore $p(n)\ge m$.
On the other hand, from any starting vertex there are at most $m$ choices at each step, so $p(n)\le m\cdot m^n$.
Taking $n$th roots and $\limsup$ yields
\[
1\le \limsup_{n\to\infty} p(n)^{1/n}\le m,
\]
and thus
\[
R\in\left[\frac1m,\,1\right]\subset(0,1].
\]

Assume that $G\cong \mathcal{C}_m$. Let $R_i$ denote the radius of convergence of $P_i(z)$. Then for any vertex $v_i$ and any $n\in\mathds{N}$ we have $p_i(n)=1$, since $\deg^+(v_i)=1$ for all $i$. Hence
\[
P_i(z)=\sum_{n=0}^{\infty} z^n=\frac{1}{1-z} \quad\Longrightarrow\quad R_i=1,
\]
and, by Corollary~\ref{cor:one_vertex_radius}, we have
\[ 
    R_i=R=1.
 \]

Now assume that $G$ is a strongly connected digraph on $m$ vertices such that $G\ncong \mathcal{C}_m$. Then there exists a vertex $v_i$ with $\deg^+(v_i)\ge 2$, which implies the existence of at least two first-return paths $fr_1$ and $fr_2$ of lengths $l_1$ and $l_2$, respectively. Define
\[
Q(z)=\frac{1}{1-z^{l_1}-z^{l_2}}.
\]
Let $R_Q$ be the radius of convergence of $Q(z)$. Since $1-z^{l_1}-z^{l_2}$ is strictly decreasing on $(0,1)$, satisfies $(1-z^{l_1}-z^{l_2})\big|_{z=0}=1$, and equals $-1$ at $z=1$, the equation
\[
1-z^{l_1}-z^{l_2}=0
\]
has a unique solution in $(0,1)$, and hence
\[
R_Q\in(0,1).
\]
Using Lemma~\ref{compare}, we obtain
\[
R_i\in(0,1).
\]
\end{proof}

\begin{definition}\label{def:topological_polynomial}
Let $G$ be a strongly connected digraph. Let
\[
P(z)=\sum_{n=0}^{\infty} p(n)\,z^n
\]
be the generating function, where $p(n)$ denotes the number of paths of length $n$, and let $R$ be the radius of convergence of $P(z)$.
Assume that $h(G)>0$ ($G\ncong \mathcal{C}_m$), so that $R\in(0,1)$.

A polynomial $T_G(z)\in\mathbb{Z}[z]$ is called a \emph{topological polynomial of $G$}
if it satisfies the following two conditions:
\begin{enumerate}
\item $T_G(R)=0$,
\item $T_G(z)\neq 0$ for every $z\in(0,R)$.
\end{enumerate}

The equation
\[
T_G(z)=0
\]
is called the \emph{topological equation} of $G$.
\end{definition}

\begin{lemma}\label{retezec}
Let $G$ be a strongly connected digraph and suppose that there exists a sequence of vertices
\[
(v_1,v_2,\dots,v_i)
\]
such that $\deg^{+}(v_j)=1$ for every $j\in\{1,2,\dots,i-1\}$ and 
\[
S(v_j)=\{j+1\}\qquad\text{for every } j\in\{1,2,\dots,i-1\}.
\]
Then
\[
P_1(z)=\frac{1-z^{\,i-1}}{1-z}+z^{\,i-1}P_i(z).
\]
\end{lemma}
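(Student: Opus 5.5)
The plan is to iterate the vertex recurrence established at the start of this section,
\[
P_j(z)=1+\sum_{l\in S(v_j)} z\,P_l(z),
\]
along the chain $(v_1,\dots,v_i)$. Since $S(v_j)=\{j+1\}$ for every $j\in\{1,\dots,i-1\}$, the sum on the right has exactly one term. This gives the exact identities
\[
P_j(z)=1+z\,P_{j+1}(z),\qquad j=1,\dots,i-1.
\]
These are genuine equalities, in contrast to the inequalities used in the proof of Lemma~\ref{gen_func}.

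I would then prove by induction on $k\in\{0,1,\dots,i-1\}$ that
\[
P_1(z)=\sum_{l=0}^{k-1} z^l + z^k P_{k+1}(z).
\]
The base case $k=0$ is trivial, since the sum is empty. For the inductive step, assume the identity holds for some $k<i-1$. Substitute $P_{k+1}(z)=1+zP_{k+2}(z)$, which is valid because $k+1\le i-1$. This yields $\sum_{l=0}^{k} z^l + z^{k+1}P_{k+2}(z)$, the identity for $k+1$.

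Taking $k=i-1$ gives $P_1(z)=\sum_{l=0}^{i-2} z^l + z^{i-1}P_i(z)$. The geometric sum equals $\frac{1-z^{i-1}}{1-z}$, which is the claimed formula.

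The only point needing care is the sense in which the identity holds. The cleanest reading is as an identity of formal power series. In that reading, $\frac{1-z^{i-1}}{1-z}$ denotes the polynomial $1+z+\dots+z^{i-2}$, so no convergence issue arises. The identity then also holds as an identity of analytic functions on $|z|<R$, by Corollary~\ref{cor:one_vertex_radius}.

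Alternatively, one can argue combinatorially at the level of coefficients. For $n\ge i-1$, every path of length $n$ from $v_1$ is forced to begin $v_1v_2\dots v_i$, so $p_1(n)=p_i(n-i+1)$. For $n<i-1$, there is exactly one path of length $n$ from $v_1$, so $p_1(n)=1$. Comparing coefficients gives the same result. No real obstacle is expected; the degenerate case $i=1$ gives $P_1=P_1$, which is consistent.
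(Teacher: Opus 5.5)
Your proposal is correct and follows the paper's proof. Both use the single-successor recurrences $P_j(z)=1+zP_{j+1}(z)$ for $j=1,\dots,i-1$ and iterate them to obtain $P_1(z)=\sum_{l=0}^{i-2}z^l+z^{i-1}P_i(z)$; your induction simply makes the paper's ``iterating the relations'' explicit, and your coefficient-level check ($p_1(n)=p_i(n-i+1)$ for $n\ge i-1$, $p_1(n)=1$ otherwise) is a valid alternative that is not needed.
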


\begin{proof}
The corresponding system of recurrences is
\begin{align*}
P_1(z)&=1+zP_2(z), \\
&\vdots \\
P_j(z)&=1+zP_{j+1}(z), \\
&\vdots \\
P_{i-1}(z)&=1+zP_i(z).
\end{align*}
Solving this system by iterating the relations yields
\[
P_1(z)=\sum_{j=0}^{i-2} z^j+z^{\,i-1}P_i(z)
=\frac{1-z^{\,i-1}}{1-z}+z^{\,i-1}P_i(z).
\]
\end{proof}

\section{Global minimum on $m$ vertices}
This chapter addresses the identification of the unique strongly connected digraph of order $m$ that minimizes positive topological entropy. To systematically analyze the $m+1$ edge regime, we introduce the $(t, k_1, k_2)$-butterfly digraph family and derive its corresponding topological equations. We prove that entropy within this class is independent of the intersection parameter $t$, allowing for the constructive realization of the global minimizer.

\begin{definition}
Let $G$ be a strongly connected digraph on $m$ vertices and $k_1,k_2 \in \{1,2,\dots\}$.  We say that
$G$ is a \emph{$(t,k_1,k_2)$-butterfly digraph} $\mathcal{B}^{\,t}_{k_1,k_2}$ if
\[
  |E| = m + 1,\qquad
  k_1 + k_2 - t = m,\qquad
  1 \le t \le \min\{k_1,k_2\},
\]
and there exist two directed simple cycles $\mathcal{C}_{k_1},\mathcal{C}_{k_2}$ as a subgraphs of $G$ satisfying
\[
  \left|V(\mathcal{C}_{k_1})\right| = k_1,\quad
  \left|V(\mathcal{C}_{k_2})\right| = k_2,\quad
  \bigl|V(\mathcal{C}_{k_1})\cap V(\mathcal{C}_{k_2})\bigr| = t.
\]
\end{definition}

\begin{figure}[H]
\centering
\begin{tikzpicture}[scale=1.5,
    every node/.style={
      circle,
      draw,
      minimum size=8mm,
      inner sep=0.5pt,           
      label distance=0.1pt       
    }
]
  \node[label=left:{$u_1=v_1$}] (A) at (0,0) {};
  \node[label=above:{$u_2$},label=below:{$v_2$}] (B) at (1,0) {};
  \node[label=right:{$u_t=v_t$}] (D) at (3,0) {};
  \node[label=above:{$u_{k_1}$}] (E) at (-1,1.5) {};
  \node[label=above:{$u_{k_1-1}$}] (F) at (0,1.5) {};
  \node[label=above:{$u_{k_1-2}$}] (G) at (1,1.5) {};
  \node[label=above:{$u_{t+2}$}] (H) at (3,1.5) {};
  \node[label=above:{$u_{t+1}$}] (I) at (4,1.5) {};
  \node[label=below:{$v_{k_2}$}] (J) at (-1,-1.5) {};
  \node[label=below:{$v_{k_2-1}$}] (K) at (0,-1.5) {};
  \node[label=below:{$v_{k_2-2}$}] (L) at (1,-1.5) {};
  \node[label=below:{$v_{t+2}$}] (M) at (3,-1.5) {};
  \node[label=below:{$v_{t+1}$}] (N) at (4,-1.5) {};
  \node[
    shape=circle,       
    minimum size=8mm,  
    draw=none,          
    inner sep=0pt      
  ] (dots) (dots1) at ($(G)!0.5!(H)$) {$\cdots$};
  \node[
    shape=circle,      
    minimum size=8mm,   
    draw=none,          
    inner sep=0pt       
  ] (dots) (dots2) at ($(L)!0.5!(M)$) {$\cdots$};

  \node[
    shape=circle,      
    minimum size=8mm,   
    draw=none,          
    inner sep=0pt       
  ] (dots) (dots3) at ($(B)!0.5!(D)$) {$\cdots$};

  \draw[->] (A)--(E);
  \draw[->] (E)--(F);
  \draw[->] (F)--(G);
  \draw[->] (G)--(dots1);
  \draw[->] (dots1)--(H);
  \draw[->] (H)--(I);
  
  \draw[->] (I)--(D);
  \draw[->] (D)--(dots3);
  \draw[->] (dots3)--(B);
  \draw[->] (B)--(A);

  \draw[->] (A)--(J);
  \draw[->] (J)--(K);
  \draw[->] (K)--(L);
  \draw[->] (L)--(dots2);
  \draw[->] (dots2)--(M);
  \draw[->] (M)--(N);
  \draw[->] (N)--(D);
  
\end{tikzpicture}
\caption{Arbitrary $\mathcal{B}^{\,t}_{k_1,k_2}$}
\label{motyl}
\end{figure}

\begin{lemma}\label{monotonicity}
Let $n_1,n_2,m_1,m_2\in\mathds{N}$ be such that $n_1\le m_1$ and $n_2\le m_2$.
Let $p_1(z)=1-z^{n_1}-z^{n_2}$ and $p_2(z)=1-z^{m_1}-z^{m_2}$.
Then $p_1$ and $p_2$ each have exactly one root in the interval $(0,1)$; denote these roots by
$z_1$ and $z_2$, respectively. Moreover, $z_1\le z_2$, with equality only if $n_1=m_1$ and $n_2=m_2$.
\end{lemma}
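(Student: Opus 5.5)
The argument is elementary real analysis on $[0,1]$ and uses only the monotonicity of $z\mapsto z^k$ on $(0,1)$.

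\emph{Existence and uniqueness.} Fix $n_1,n_2\in\mathds{N}$ with $n_1,n_2\ge 1$. Then $p_1$ is continuous on $[0,1]$ and its derivative
\[
p_1'(z)=-n_1z^{n_1-1}-n_2z^{n_2-1}
\]
is strictly negative on $(0,1)$, so $p_1$ is strictly decreasing there. Since $p_1(0)=1>0$ and $p_1(1)=-1<0$, the intermediate value theorem together with strict monotonicity gives exactly one root $z_1\in(0,1)$. This is the same argument already used in the proof of Lemma~\ref{lem:R_bounds}, and it applies verbatim to $p_2$, giving its unique root $z_2$.

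\emph{Comparison.} For $z\in(0,1)$ the map $k\mapsto z^k$ is strictly decreasing in $k$. Hence $n_1\le m_1$ and $n_2\le m_2$ give $z^{m_1}\le z^{n_1}$ and $z^{m_2}\le z^{n_2}$, and therefore
\[
p_2(z)\ge p_1(z)\qquad\text{for all } z\in(0,1).
\]
Evaluating at $z=z_1$ gives $p_2(z_1)\ge p_1(z_1)=0$. Now $p_2$ is strictly decreasing with unique zero $z_2$, so $p_2>0$ on $(0,z_2)$ and $p_2<0$ on $(z_2,1)$. Since $p_2(z_1)\ge 0$, the point $z_1$ cannot lie in $(z_2,1)$, and so $z_1\le z_2$.

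\emph{Equality case.} This is the only step needing care. If $(n_1,n_2)\neq(m_1,m_2)$, then at least one of the inequalities $n_1\le m_1$, $n_2\le m_2$ is strict. Since $z_1\in(0,1)$, that strict inequality gives a strict inequality of the corresponding powers at $z_1$, so $p_2(z_1)>p_1(z_1)=0$. Because $p_2(z)\le 0$ for all $z\ge z_2$, this forces $z_1<z_2$. Therefore $z_1=z_2$ can occur only when $n_1=m_1$ and $n_2=m_2$, as claimed.

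We read $\mathds{N}$ as the positive integers here; otherwise exponent $0$ would make $p_i(0)=0$ and the root count would fail.
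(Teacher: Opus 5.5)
Your proof is correct and complete. The paper states this lemma without any proof of its own, so there is nothing to compare against directly. Your argument is, however, exactly the reasoning the paper relies on elsewhere:
\begin{itemize}
\item The existence and uniqueness step (strict decrease on $(0,1)$, with $p(0)=1$ and $p(1)=-1$) is the argument from the proof of Lemma~\ref{lem:R_bounds}.
\item Deducing the order of the roots from the sign of $p_2$ at $z_1$ is the device used for Lemma~\ref{polynomy_koren} and throughout the Appendix. You get that sign from the pointwise bound $p_2\ge p_1$, which is strict when $(n_1,n_2)\neq(m_1,m_2)$.
\end{itemize}
Your reading of $\mathds{N}$ as the positive integers is the right one, and it matches the butterfly definition, where $k_1,k_2\in\{1,2,\dots\}$.
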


\begin{lemma}\label{lem:m+1_is_butterfly}
Let $G=(V,E)$ be a strongly connected digraph with $|V|=m$ and $|E|=m+1$.
Then there exist integers $k_1,k_2\in\mathds{N}$ and $t\in\mathds{N}$ such that
\[
G\cong \mathcal{B}^{\,t}_{k_1,k_2}.
\]
\end{lemma}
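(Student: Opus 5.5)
Our plan is to build the two cycles demanded by the butterfly definition from an ear decomposition of $G$. Strong connectivity gives $\deg^+(v)\ge 1$ and $\deg^-(v)\ge 1$ for every $v$, so $\sum_v \deg^+(v)=m+1$ forces exactly one vertex $a$ with $\deg^+(a)=2$; every other vertex has out-degree $1$. Symmetrically, there is exactly one vertex $b$ with $\deg^-(b)=2$ and all other in-degrees equal $1$. Let the two out-edges of $a$ be $(a,x_1)$ and $(a,y_1)$, with $x_1\neq y_1$ since multiple edges are excluded.

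Next we follow the forced trajectory from $x_1$. Every vertex other than $a$ has a unique successor, and $G$ is strongly connected, so iterating successors from $x_1$ must eventually reach $a$. Otherwise the trajectory would stay inside a set of out-degree-$1$ vertices that is closed under successors and avoids $a$, and nothing outside that set would be reachable from $x_1$. Stopping at the first hit of $a$ yields a first-return path $fr_1=a\,x_1\dots a$. Since the trajectory before reaching $a$ is deterministic and cannot repeat a vertex without cycling forever away from $a$, $fr_1$ is a simple closed path, i.e. a directed simple cycle; call its length $k_1$. Doing the same from $y_1$ gives a simple cycle $fr_2$ of length $k_2$ through $a$. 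The two cycles are distinct because $x_1\ne y_1$. Loops fit this framework as cycles of length $1$, for instance when $x_1=a$.

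We then count. Every vertex lies on $fr_1\cup fr_2$: given any $v$, take a path from $a$ to $v$. After leaving $a$ it follows a forced trajectory, so it stays on $fr_1$ or $fr_2$ until its first return to $a$, and repeating this argument shows every vertex it visits lies on one of the two cycles. The union $H=fr_1\cup fr_2$ therefore spans $V$. Its edge set has size $k_1+k_2-s$, where $s$ is the number of shared edges. Because $a$ is the only branching vertex and $b$ the only merging vertex, the two cycles share exactly the segment from $b$ to $a$. Concretely, after diverging at $a$ they cannot meet again before $b$, and from $b$ onward they coincide because successors are unique. That segment has $t$ vertices and $t-1$ edges, where $t=|V(fr_1)\cap V(fr_2)|\ge 1$, since $a$ is always shared. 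Hence $H$ has $k_1+k_2-t$ vertices and $k_1+k_2-t+1$ edges. Because $H$ spans $V$, we get $m=k_1+k_2-t$, and the edges of $H$ number $m+1=|E|$, so $H=G$. The bound $t\le\min\{k_1,k_2\}$ is automatic. This verifies every condition in the definition of $\mathcal{B}^{\,t}_{k_1,k_2}$.

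The main obstacle is the step showing that the intersection of the two cycles is exactly one contiguous segment ending at $a$, giving the vertex count $k_1+k_2-t$. We intend to handle it by recording that each vertex on either cycle other than $a$ has in-degree $1$ unless it is $b$. A vertex shared by both cycles, other than $a$, must then either be $b$ or have its predecessor shared as well. Iterating backward from any shared vertex therefore reaches $b$, which makes the shared set the path from $b$ to $a$. If $b=a$, the intersection is just $\{a\}$ and $t=1$, which is the $\infty(k_1,k_2)$ case.
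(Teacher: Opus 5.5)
Your proof is correct and follows essentially the same route as the paper: the unique out-degree-$2$ vertex, the two forced first-return cycles through it (the paper's shortest paths are exactly your forced trajectories), the observation that every vertex lies on one of the two cycles, and $m=k_1+k_2-t$ by inclusion--exclusion. The paper stops there because the definition of $\mathcal{B}^{\,t}_{k_1,k_2}$ imposes only these numerical conditions; your extra in-degree and edge-count argument (showing that $G=fr_1\cup fr_2$ and that the intersection is the contiguous segment from $b$ to $a$) is not required by that definition, but it is what justifies the shape in Figure~\ref{motyl} that Lemma~\ref{gen_fun_motyl} relies on.
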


\begin{proof}
Since $|E|=m+1$, we have $\sum_{u\in V}\deg^{+}(u)=m+1$.
Strong connectivity implies $\deg^{+}(u)\ge 1$ for all $u\in V$, hence there is a unique vertex
$v\in V$ with $\deg^{+}(v)=2$ and $\deg^{+}(u)=1$ for all $u\neq v$.
Let the two outgoing edges of $v$ be $(v,v_1)$ and $(v,v_2)$ (possibly $v_j=v$ if there is a loop at $v$).

For $j\in\{1,2\}$, choose a shortest directed path $P_{v_jv}$ from $v_j$ to $v$ and let its length be $k_j-1$.
Then $vv_j\circ P_{v_jv}$ is a directed simple cycle $\mathcal C_{k_j}$ of length $k_j$.

Moreover, every vertex lies on at least one of these two cycles: starting from $v$, the first step uses either the edge
$v\to v_1$ or $v\to v_2$, and after leaving $v$ the path is forced (all out-degrees are $1$) until it returns to $v$.
Thus $V=V(\mathcal C_{k_1})\cup V(\mathcal C_{k_2})$.
Let
\[
t=\bigl|V(\mathcal C_{k_1})\cap V(\mathcal C_{k_2})\bigr|.
\]
The number of common vertices $t$ is at least $1$ (since the vertex $v$ is shared) and at most $\min\{k_1,k_2\}$. Then $m=k_1+k_2-t$, and $G$ is isomorphic to a butterfly digraph $\mathcal{B}^{\,t}_{k_1,k_2}$.

\end{proof}

\begin{lemma}\label{t_omez}
    Let $m\geq2$ and $G\cong \mathcal{B}^{\,t}_{k_1,k_2}$ for some $t,k_1,$ and $k_2$. Then
    \[ 
        t\in\{1,2,\dots,m-1\}.
     \]
\end{lemma}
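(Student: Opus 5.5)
The plan is to derive both bounds on $t$ directly from the defining constraints of $\mathcal{B}^{\,t}_{k_1,k_2}$, namely
\[
k_1+k_2-t=m,\qquad 1\le t\le \min\{k_1,k_2\},\qquad |E|=m+1 .
\]
The lower bound $t\ge 1$ is part of the definition, so only $t\le m-1$ needs work. I would argue by contradiction and assume $t\ge m$.

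From $t\le\min\{k_1,k_2\}$ we get $t\le k_1$ and $t\le k_2$. Substituting into $m=k_1+k_2-t$ gives
\[
m=k_1+(k_2-t)\ge k_1\ge t,
\]
so $t\le m$ always holds. Hence the assumption $t\ge m$ forces $t=m$, and then $k_1=m$ and $k_2-t=0$, i.e.\ $k_2=m$ as well. So $\mathcal{C}_{k_1}$ and $\mathcal{C}_{k_2}$ are both Hamiltonian directed simple cycles on the same vertex set $V$, since each has $m$ vertices.

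The main step, and the only genuine obstacle, is to show that two such cycles cannot coexist in a digraph with exactly $m+1$ edges. I would use the out-degree count from the proof of Lemma~\ref{lem:m+1_is_butterfly}. Since $\sum_u \deg^+(u)=m+1$ and every $\deg^+(u)\ge 1$, exactly one vertex $v$ has out-degree $2$ and every other vertex has out-degree $1$.

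On a Hamiltonian cycle every vertex has exactly one cycle-successor. At each vertex $u\ne v$ there is only one outgoing edge, so both cycles use the same successor of $u$. Now follow $\mathcal{C}_{k_1}$ from the successor of $v$ on it. Each subsequent step leaves a vertex $u\neq v$ and is therefore forced, and this forced walk visits all $m-1$ vertices other than $v$ before returning to $v$. The same forced walk must then appear in $\mathcal{C}_{k_2}$. Consequently the predecessor-successor structure of both cycles agrees off $v$, and so the successor of $v$ is also determined: it is the unique vertex not entered by the forced chain. The two Hamiltonian cycles therefore coincide as subgraphs, so $\mathcal{C}_{k_1}$ together with $\mathcal{C}_{k_2}$ uses only $m$ edges, while $v$ carries a second out-edge $(v,w)$ lying on neither cycle.

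This contradicts the requirement in the definition that $\mathcal{C}_{k_1}$ and $\mathcal{C}_{k_2}$ are the two distinct cycles through the two out-edges of $v$. Equivalently, following the proof of Lemma~\ref{lem:m+1_is_butterfly}, the cycles arise as $vv_j\circ P_{v_jv}$ for the two distinct successors $v_1\ne v_2$ of $v$. These start with different edges, so they cannot both be the same Hamiltonian cycle.

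If there is concern that the definition does not force distinct cycles, I would instead make this explicit via Lemma~\ref{lem:m+1_is_butterfly}'s construction, where the cycles begin with distinct edges. Alternatively, the count can be made direct: two distinct Hamiltonian cycles on $m$ vertices would have to differ at some vertex's out-edge, and since only $v$ has two out-edges, they differ exactly at $v$. But then the two cycles enter different successors of $v$, say $w_1\neq w_2$, after which both continue along the unique forced chain of out-degree-one vertices back to $v$. The chain from $w_1$ must traverse all $m-1$ non-$v$ vertices, including $w_2$, so the chain from $w_2$ is a proper suffix of it. Hence the second cycle has fewer than $m$ vertices, a contradiction. Therefore $t\le m-1$, and together with $t\ge 1$ this gives $t\in\{1,\dots,m-1\}$ for all $m\ge 2$.
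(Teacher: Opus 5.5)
Your proposal is correct and follows the paper's route: you derive $t\le m$ from $t\le\min\{k_1,k_2\}$ and $k_1+k_2-t=m$, then exclude the case $t=k_1=k_2=m$. Your forced-successor argument (in a digraph whose only out-degree-$2$ vertex is $v$, any two Hamiltonian cycles coincide) supplies the justification the paper omits when it simply calls this case an ``impossible butterfly digraph''; like the paper, it implicitly reads the definition as requiring the two cycles $\mathcal{C}_{k_1},\mathcal{C}_{k_2}$ to be distinct.
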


\begin{proof}
Without loss of generality assume $k_1\leq k_2$. We know that $t\le k_1$ and $k_1\le k_2$, hence $k_1+k_2\ge 2t$. On the other hand,
equation $k_1+k_2-t=m$ gives $k_1+k_2=m+t$, and therefore $m+t\ge 2t$, which implies
$t\le m$. If $t=m$, then necessarily $t=k_1=k_2=m$, which would yield an impossible
butterfly digraph. Consequently, for fixed $m$ we always have $t\le m-1$.
\end{proof}

\begin{lemma}\label{gen_fun_motyl}
Let $G\cong \mathcal{B}^{\,t}_{k_1,k_2}$ for some $t,k_1,$ and $k_2$. Let $w_1$ be the unique vertex of $G$ such that
$\deg^{+}(w_1)=2$, and let $P_1(z)$ denote the generating function for the numbers of directed paths of length $n$
starting at $w_1$. Then
\[
P_1(z)=\frac{N_1(z)}{(1-z)\bigl(1-z^{k_1}-z^{k_2}\bigr)},
\]
where $N_1(z)$ is a polynomial in the variable $z$.
\end{lemma}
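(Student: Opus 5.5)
Label the vertices as in Figure~\ref{motyl}. Take $w_1=u_1=v_1$, the unique vertex of out-degree $2$. Its two successors are $u_{k_1}$ and $v_{k_2}$, which start the two first-return paths of lengths $k_1$ and $k_2$. Every other vertex has out-degree $1$. I would therefore apply Lemma~\ref{retezec} along each branch to express $P_1$ in terms of itself. The branches are the chains $u_{k_1}\to\dots\to u_{t+1}\to u_t=v_t\to\dots\to u_2\to u_1$ and $v_{k_2}\to\dots\to v_{t+1}\to v_t$. Beyond $v_t$ the second branch merges with the shared segment leading back to $w_1$.

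First step: apply Lemma~\ref{retezec} to the forced chain from the successor on the first cycle back to $w_1$. This chain has $k_1$ vertices including the final $w_1$, all but the last of out-degree $1$. It gives
\[
P_{u_{k_1}}(z)=\frac{1-z^{k_1-1}}{1-z}+z^{k_1-1}P_1(z),
\]
and analogously
\[
P_{v_{k_2}}(z)=\frac{1-z^{k_2-1}}{1-z}+z^{k_2-1}P_1(z).
\]
This works because the chain from $v_{k_2}$ through $v_{t+1}$, $v_t$, and on to $v_1=w_1$ is also forced, of length $k_2-1$. Degenerate cases need a check: when $k_j=1$ the edge is a loop, the successor is $w_1$ itself, and the formula is trivially $P=P_1$. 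When $t=1$ the shared segment is just $w_1$. Lemma~\ref{retezec} with $i=1$ is consistent here, since it reads $P_1=0+P_1$.

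Second step: substitute into $P_1(z)=1+zP_{u_{k_1}}(z)+zP_{v_{k_2}}(z)$. This yields
\[
P_1(z)\bigl(1-z^{k_1}-z^{k_2}\bigr)=1+\frac{z(1-z^{k_1-1})+z(1-z^{k_2-1})}{1-z}.
\]
Multiplying through by $1-z$ gives
\[
N_1(z)=1-z+2z-z^{k_1}-z^{k_2}=1+z-z^{k_1}-z^{k_2},
\]
which is clearly a polynomial. The identity holds as formal power series, so dividing by $(1-z)(1-z^{k_1}-z^{k_2})$ is legitimate in $\mathbb{Z}[[z]]$, since the denominator has constant term $1$.

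The main obstacle is the bookkeeping: checking that the two branches indeed reach $w_1$ after exactly $k_1-1$ and $k_2-1$ further steps, including the shared segment of $t-1$ edges. This follows from the structure established in Lemma~\ref{lem:m+1_is_butterfly}: each $\mathcal C_{k_j}$ is the edge $w_1\to\text{successor}$ followed by a forced simple path of length $k_j-1$ back to $w_1$. So I would phrase the argument directly via those two cycles, and avoid the figure's labels, to cover all $t$ uniformly.
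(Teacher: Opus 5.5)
Your proposal is correct and is essentially the paper's proof: you apply Lemma~\ref{retezec} along each forced chain from a successor of $w_1$ back to $w_1$, then substitute into $P_1(z)=1+zP_{u_{k_1}}(z)+zP_{v_{k_2}}(z)$. Your explicit numerator $N_1(z)=1+z-z^{k_1}-z^{k_2}$ is correct (the paper does not write it out), and because you apply the lemma to each whole chain of $k_j$ vertices, $t$ plays no role, so your separate $t=1$ remark is unnecessary.
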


\begin{proof}
    Label by $w_1$ the unique vertex with $\deg^+(w_1)=2$, and let $\mathcal{C}_{k_1}$ and $\mathcal{C}_{k_2}$ be the two directed simple cycles in $G$, with
\begin{align*}
    V(\mathcal{C}_{k_1})&=\{w_1,u_2,\dots,u_t,u_{t+1},\dots,u_{k_1-1},u_{k_1}\},\\
    V(\mathcal{C}_{k_2})&=\{w_1,v_2,\dots,v_t,v_{t+1},\dots,v_{k_2-1},v_{k_2}\},
\end{align*}
as in Figure~\ref{motyl}.  Hence
\[
    V(\mathcal{C}_{k_1})\cap V(\mathcal{C}_{k_2})
    =\{w_1,u_2,\dots,u_t\}
    =\{w_1,v_2,\dots,v_t\}.
\]

Let $P_i(z)$ denote the generating function for vertices that belong to both cycles $\mathcal{C}_{k_1}$ and $\mathcal{C}_{k_2}$, and let $P_{i,\mathcal{C}_{k_j}}(z)$, for $j=1,2$, denote the generating function for vertices that lie in exactly one of these cycles.  We then decompose the generating function for the entire digraph $G$ as follows:

\[
    P(z)
    = \sum_{i=1}^t P_i(z)
    + \sum_{i=t+1}^{k_1} P_{i,\mathcal{C}_{k_1}}(z)
    + \sum_{i=t+1}^{k_2} P_{i,\mathcal{C}_{k_2}}(z),
\]
and from the structure of $\mathcal{B}^{\,t}_{k_1,k_2}$ (Figure \ref{motyl}) we obtain
\[
    P_1(z)
    = 1 + z\,P_{k_1,\mathcal{C}_{k_1}}(z)
         + z\,P_{k_2,\mathcal{C}_{k_2}}(z),
\]
together with
\[
\begin{alignedat}{2}
  P_{k_1,\mathcal{C}_{k_1}}(z)&=1+z\,P_{k_1-1,\mathcal{C}_{k_1}}(z),\quad&
  P_{k_2,\mathcal{C}_{k_2}}(z)&=1+z\,P_{k_2-1,\mathcal{C}_{k_2}}(z),\\
  P_{k_1-1,\mathcal{C}_{k_1}}(z)&=1+z\,P_{k_1-2,\mathcal{C}_{k_1}}(z),&
  P_{k_2-1,\mathcal{C}_{k_2}}(z)&=1+z\,P_{k_2-2,\mathcal{C}_{k_2}}(z),\\
  &\vdots& &\vdots\\
   P_{t+1,\mathcal C_{k_1}}(z)&=1+z\,P_t(z) &
   P_{t+1,\mathcal C_{k_2}}(z)&=1+z\,P_t(z), \\
\end{alignedat}
\]
and 
\begin{align*}
    & P_t(z)=1+z\,P_{t-1}(z),\\
& \hspace{8mm}\vdots\\
& P_2(z)=1+z\,P_1(z).
\end{align*}
Hence, by Lemma \ref{retezec}
\[
    P_{k_1,\mathcal{C}_{k_1}}(z)
    = \frac{1-z^{k_1-1}}{1-z}+z^{k_1-1}P_1(z),
    \quad
    P_{k_2,\mathcal{C}_{k_2}}(z)
    = \frac{1-z^{k_2-1}}{1-z}+z^{k_2-1}P_1(z),
\]
and consequently
\begin{align*}
    P_1(z)
    &= 1 + \frac{z-z^{k_1}}{1-z}+z^{k_1}P_1(z) 
      + \frac{z-z^{k_2}}{1-z}+z^{k_2}P_1(z),\\
    P_1(z)
    &= \frac{N_1(z)}{(1-z)(1 - z^{k_1} - z^{k_2})}.
\end{align*}
\end{proof}

\begin{corollary}\label{polynom_m+1}
Let $G\cong \mathcal{B}^{\,t}_{k_1,k_2}$ for some $t$, $k_1$, and $k_2$.
Then the equation
\[
1-z^{k_1}-z^{k_2}=0
\]
has a unique solution $z_0\in(0,1)$,
\[
T_G(z)=1-z^{k_1}-z^{k_2},
\]
and
\[
h(G)=-\ln(z_0).
\]
In particular, $1-z^{k_1}-z^{k_2}=0$ is the topological equation of $G$.
\end{corollary}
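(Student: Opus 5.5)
The plan is to read everything off the rational expression from Lemma~\ref{gen_fun_motyl} and combine it with Corollary~\ref{cor:one_vertex_radius}, Pringsheim's theorem (Theorem~\ref{pring}) and Proposition~\ref{prop:entropy-radius}.

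\textbf{Existence and uniqueness of $z_0$.} Put $f(z)=1-z^{k_1}-z^{k_2}$. On $[0,1]$ this function is continuous and strictly decreasing, with $f(0)=1$ and $f(1)=-1$. Hence it has exactly one zero $z_0\in(0,1)$; this is the same argument as in Lemma~\ref{lem:R_bounds}, or Lemma~\ref{monotonicity} with equal exponents.

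\textbf{Identifying the radius of convergence.} Let $w_1$ be the vertex of out-degree $2$. By Lemma~\ref{gen_fun_motyl},
\[
P_1(z)=\frac{N_1(z)}{(1-z)f(z)}.
\]
Since $G\ncong\mathcal C_m$ (it has $m+1$ edges), Lemma~\ref{lem:R_bounds} gives $R\in(0,1)$. By Corollary~\ref{cor:one_vertex_radius}, $R$ is also the radius of convergence of $P_1$. Because $P_1$ has nonnegative coefficients, Theorem~\ref{pring} says $z=R$ is a singularity of the rational function $P_1$, so $R$ must be a zero of the denominator $(1-z)f(z)$. As $R<1$, this forces $f(R)=0$, and by uniqueness $R=z_0$.

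This route avoids the main obstacle, which would be showing that $N_1$ does not vanish at $z_0$ and cancel the pole. Cancellation cannot happen here, because Pringsheim guarantees a genuine singularity at $R$. For safety one could also compute the numerator directly: it works out to $N_1(z)=1+z-z^{k_1}-z^{k_2}$, and $N_1(z_0)=z_0>0$, which confirms the pole independently.

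\textbf{Topological polynomial and entropy.} With $R=z_0$ established, $T_G=f$ satisfies both conditions of Definition~\ref{def:topological_polynomial}:
\begin{enumerate}
\item $f(R)=0$;
\item $f>0$ on $(0,R)$, by strict monotonicity.
\end{enumerate}
Finally, Proposition~\ref{prop:entropy-radius} gives $h(G)=-\ln R=-\ln z_0$.
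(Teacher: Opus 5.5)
Your proof is correct and follows the paper's argument step by step. It uses the rational form from Lemma~\ref{gen_fun_motyl}, gets $R\in(0,1)$ from Lemma~\ref{lem:R_bounds}, and applies Pringsheim to force $R$ to be a zero of $(1-z)(1-z^{k_1}-z^{k_2})$, hence $R=z_0$; non-cancellation comes for free, exactly as the paper notes, and you then conclude with Definition~\ref{def:topological_polynomial} and Proposition~\ref{prop:entropy-radius}. Your explicit numerator $N_1(z)=1+z-z^{k_1}-z^{k_2}$, with $N_1(z_0)=z_0>0$, is a correct extra check that the paper does not carry out.
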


\begin{proof}
Let $w_1$ denote the unique vertex of $G$ such that
$\deg^{+}(w_1)=2$. By Lemma~\ref{gen_fun_motyl}, the generating function $P_1(z)$
has the form
\[
P_1(z)
=
\frac{N_1(z)}
{(1-z)\bigl(1-z^{k_1}-z^{k_2}\bigr)}.
\]
Let $R$ denote the common radius of convergence of $P_1(z)$ and
the total generating function $P(z)$; these radii coincide by
Corollary~\ref{cor:one_vertex_radius}.

Since $G\not\cong \mathcal{C}_m$, Lemma~3.6 gives $R\in(0,1)$. By Theorem~\ref{pring}, $z=R$ is a singularity of $P_1$. Since
$P_1$ is rational, $R$ must be a zero of its displayed
denominator. The factor $1-z$ does not vanish on $(0,1)$, whereas the polynomial
\[
1-z^{k_1}-z^{k_2}
\]
has a unique zero $z_0\in(0,1)$. Hence $R=z_0$. In particular,
the zero $z_0$ cannot be cancelled by the numerator, since
otherwise it would be a removable singularity.

Moreover, since $z_0$ is the unique zero of
$1-z^{k_1}-z^{k_2}$ in $(0,1)$, this polynomial has no zero in
$(0,z_0)$. Therefore,
\[
T_G(z)=1-z^{k_1}-z^{k_2}
\]
is a topological polynomial of $G$.

Finally, by Proposition~\ref{prop:entropy-radius},
\[
h(G)=-\ln R=-\ln(z_0).
\]
Thus,
\[
1-z^{k_1}-z^{k_2}=0
\]
is the topological equation of $G$.
\end{proof}

\begin{corollary}\label{srovnavani_motyl_minimum}
Let $\mathcal{SC}_{m+1}(m)$ denote the class of all strongly connected digraphs on $m$ vertices with $m+1$ edges, considered up to isomorphism and $m\geq2$.
Then
\[
h\!\left(\mathcal{B}_{m-1,m}^{\,m-1}\right) \leq h(G)
\quad \text{for every } G\in \mathcal{SC}_{m+1}(m),
\]
with the equality if and only if $G\cong\mathcal{B}_{m-1,m}^{\,m-1}$.
In other words, the digraph $\mathcal{B}_{m-1,m}^{\,m-1}$ minimizes the topological entropy (up to isomorphism) within the class $\mathcal{SC}_{m+1}(m)$.

Moreover,
\[ 
    h\!\left(\mathcal{B}_{m-1,m}^{\,m-1}\right)<\frac{\ln2}{m-1}.
 \]
\end{corollary}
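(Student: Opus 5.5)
By Lemma~\ref{lem:m+1_is_butterfly}, every $G\in\mathcal{SC}_{m+1}(m)$ is isomorphic to some $\mathcal{B}^{\,t}_{k_1,k_2}$. By Corollary~\ref{polynom_m+1}, $h(G)=-\ln z_0$, where $z_0$ is the unique root in $(0,1)$ of $1-z^{k_1}-z^{k_2}$. Minimizing entropy is therefore the same as maximizing $z_0$, and Lemma~\ref{monotonicity} says $z_0$ increases strictly in each exponent. So the first step is to determine which pairs $(k_1,k_2)$ can occur for a given $m$, and to show that $(m-1,m)$ dominates all of them componentwise.

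Assume without loss of generality $k_1\le k_2$. Each $\mathcal{C}_{k_j}$ is a simple cycle in an $m$-vertex digraph, so $k_j\le m$.
\begin{itemize}
\item If $k_1=k_2=m$, both cycles contain every vertex. Then $t=m$, which Lemma~\ref{t_omez} excludes.
\item Hence $k_1\le m-1$ and $k_2\le m$, so every admissible pair satisfies $(k_1,k_2)\le(m-1,m)$ componentwise.
\end{itemize}
Lemma~\ref{monotonicity} then gives $z_0(k_1,k_2)\le z_0(m-1,m)$, with equality only when $(k_1,k_2)=(m-1,m)$. This yields $h(\mathcal{B}^{\,m-1}_{m-1,m})\le h(G)$. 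Note that $\mathcal{B}^{\,m-1}_{m-1,m}$ is realizable: take an $m$-cycle plus one chord that skips a single vertex. Its parameters are $t=k_1+k_2-m=m-1$, consistent with the definition.

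For the equality case I need the converse direction: $(k_1,k_2)=(m-1,m)$ must force $G\cong\mathcal{B}^{\,m-1}_{m-1,m}$. Here $t=m-1$ is fixed. Using the proof of Lemma~\ref{lem:m+1_is_butterfly}:
\begin{itemize}
\item $G$ consists of the unique out-degree-$2$ vertex $v$ and two first-return cycles through $v$.
\item One cycle is Hamiltonian, and the other omits exactly one vertex.
\item Every vertex other than $v$ has out-degree $1$, so the cycles coincide on their common part. The $(m-1)$-cycle is the Hamiltonian cycle with the single vertex $u$ bypassed by the chord $u^-\to u^+$.
\item Because of the out-degree constraint, $u^-=v$. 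Hence the digraph is unique up to relabeling along the Hamiltonian cycle.
\end{itemize}
Writing this out carefully, including the case $m=2$ with a loop (where $k_1=1$), is the step that needs the most care, but it is routine.

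For the bound, let $R$ be the root of $1-z^{m-1}-z^m$. Set $w=R^{m-1}$; then $w=1-R^m=1-wR$, so $w(1+R)=1$ and $R^{m-1}=1/(1+R)$. Since $R<1$, this gives $R^{m-1}>1/2$. Therefore $(m-1)\ln R>-\ln 2$, which is $h=-\ln R<\ln 2/(m-1)$. The inequality is strict because $R<1$ strictly, by Lemma~\ref{lem:R_bounds}.
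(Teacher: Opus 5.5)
Your proposal is correct and follows the paper's argument: butterfly representation, the bounds $k_1\le m-1$, $k_2\le m$ via Lemma~\ref{t_omez}, then strict monotonicity of the root from Lemma~\ref{monotonicity}. The differences are minor. You derive the bound $h<\ln 2/(m-1)$ directly from the identity $R^{m-1}=1/(1+R)$, whereas the paper compares with the root $2^{-1/(m-1)}$ of $1-2z^{m-1}$. You also check explicitly that $(k_1,k_2)=(m-1,m)$ determines $G$ up to isomorphism, a point the paper leaves implicit.
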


\begin{proof}
Let $G\in\mathcal{SC}_{m+1}(m)$. Without loss of generality, assume that
$k_1\le k_2$. Then, using Lemma~\ref{t_omez}, by Lemma~\ref{lem:m+1_is_butterfly} there exist integers $k_1\le m-1$, $k_2\le m$, and $t\le k_1$ such that $G\cong \mathcal{B}^{t}_{k_1,k_2}$.
A topological polynomial of $G$ is
\[
T_G(z)=1-z^{k_1}-z^{k_2},
\]
and the topological entropy is determined by the positive solution $z(k_1,k_2)$ of the topological equation
\[ 
    1-z^{k_1}-z^{k_2}=0.
 \]
Since $z(k_1,k_2)\in(0,1)$, Lemma~\ref{monotonicity} yields that the root of $1-z^{k_1}-z^{k_2}$ is maximized when $k_1$ and $k_2$ take their largest admissible values. In particular, $z(m-1,m)>z(k_1,k_2)$ for every admissible pair $(k_1,k_2)\neq(m-1,m)$, where $z(m-1,m)$ is the root of the polynomial 
\[ 
    1-z^{m-1}-z^m,
 \]
which is the topological polynomial of $\mathcal{B}_{m-1,m}^{\,m-1}$.
Moreover, since $z(m-1,m-1)=2^{-\frac{1}{m-1}}$, we have
\[
h\!\left(\mathcal{B}_{m-1,m-1}^{\,m-2}\right)=\frac{\ln 2}{m-1},
\]
and consequently
\[
h\!\left(\mathcal{B}_{m-1,m}^{\,m-1}\right)<\frac{\ln 2}{m-1}.
\]
\end{proof}

\begin{corollary}\label{cor:butterfly_t_irrelevant}
Let $k_1,k_2\ge 1$ and let $t\in\{1,2,\dots,m-1\}$.
Let $R=R(k_1,k_2)\in(0,1)$ be the unique positive root of
\[
1-z^{k_1}-z^{k_2}=0.
\]
Then
\[
h(\mathcal{B}^{\,t}_{k_1,k_2})=-\ln R,
\]
in particular the value is independent of $t$.

Moreover, if $k_1=k_2=k$, then $R=2^{-1/k}$ and hence for every $t\in\{1,\dots,k-1\}$,
\[
h(\mathcal{B}^{\,t}_{k,k})=\frac{\ln 2}{k}.
\]
\end{corollary}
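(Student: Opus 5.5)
The statement follows almost directly from Corollary~\ref{polynom_m+1}, so the plan is to reduce to it and then settle the special case $k_1=k_2$ by hand.

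\emph{Step 1 (identifying $R$ and the entropy).} Fix any admissible $t$, so that $\mathcal{B}^{\,t}_{k_1,k_2}$ is a genuine butterfly digraph on $m=k_1+k_2-t$ vertices with $m+1$ edges. Corollary~\ref{polynom_m+1} then says that $1-z^{k_1}-z^{k_2}=0$ has a unique solution $z_0\in(0,1)$ and that $h(\mathcal{B}^{\,t}_{k_1,k_2})=-\ln z_0$.

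The root in $(0,1)$ is also the unique positive root. For $z\ge 1$ we have $1-z^{k_1}-z^{k_2}\le -1<0$. Moreover, the function is strictly decreasing on $(0,\infty)$, with value $1$ at $0$. Hence $z_0=R$, and $h(\mathcal{B}^{\,t}_{k_1,k_2})=-\ln R$.

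\emph{Step 2 (independence of $t$).} The polynomial $1-z^{k_1}-z^{k_2}$, and therefore $R$, involves only $k_1$ and $k_2$. So the entropy is the same for every admissible $t$.

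The only care needed is the hypothesis on $t$. The statement lets $t$ range over $\{1,\dots,m-1\}$, but the butterfly definition also requires $t\le\min\{k_1,k_2\}$ and $m=k_1+k_2-t$. I will read the statement as quantifying over those $t$ for which $\mathcal{B}^{\,t}_{k_1,k_2}$ exists; Lemma~\ref{t_omez} guarantees that such $t$ automatically lie in $\{1,\dots,m-1\}$.

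\emph{Step 3 (the case $k_1=k_2=k$).} Here the topological equation becomes $1-2z^k=0$, whose unique positive root is $R=2^{-1/k}$. Therefore $h=-\ln 2^{-1/k}=\frac{\ln 2}{k}$.

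It remains to justify the range $t\in\{1,\dots,k-1\}$. By the butterfly definition, $m=2k-t$. If $t=k$, then $m=k$ and the two $k$-cycles would have identical vertex sets. Since the vertex of out-degree $2$ is the only branching point, the two cycles would then coincide, which is impossible; this is exactly the argument of Lemma~\ref{t_omez}. So $t\le k-1$, and every such $t$ gives the same entropy.

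There is no real obstacle here, since everything is inherited from Corollary~\ref{polynom_m+1}. The only delicate point is the bookkeeping of the admissible range of $t$ relative to $m$, $k_1$ and $k_2$.
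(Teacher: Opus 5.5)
Your proposal is correct and follows the paper's proof: both reduce the entropy formula and its $t$-independence to Corollary~\ref{polynom_m+1}, and then solve $1-2z^k=0$ in the case $k_1=k_2=k$. Your extra checks, that the root in $(0,1)$ is the unique positive root and that $t=k$ is excluded by the argument of Lemma~\ref{t_omez}, just spell out what the paper states in passing.
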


\begin{proof}
By Corollary~\ref{polynom_m+1} we have
\[
h(\mathcal{B}^{\,t}_{k_1,k_2})=-\ln R,
\]
where $R\in(0,1)$ is the unique positive root of $1-z^{k_1}-z^{k_2}=0$.
In particular, the left-hand side depends only on $(k_1,k_2)$ and not on $t$,
which proves the first claim.

If $k_1=k_2=k$, then the equation becomes $1-2z^k=0$, hence
$R=2^{-1/k}$ and therefore $h(\mathcal{B}^{\,t}_{k,k})=\ln(2)/k$ for every admissible
$t$. Note that by definition of $\mathcal{B}^{\,t}_{k,k}$ we have
$t\in\{1,2,\dots,k-1\}$, i.e.\ the case $t=k$ is not admissible.
\end{proof}

\begin{figure}[h!]
  \centering

  \begin{subfigure}[b]{0.32\textwidth}
    \centering
    \begin{tikzpicture}[>=stealth,
        node/.style={circle,draw,minimum size=6mm,inner sep=1pt}
    ]
      \node[node] (v1) at (0,0) {$v_1$};
      \node[node] (v2) at (1.5,0) {$v_2$};
      \node[node] (v3) at (3,0)   {$v_3$};
      \node[node] (v4) at (0.75,1.5) {$v_5$};
      \node[node] (v5) at (2.25,1.5) {$v_4$};
      \node[node] (v6) at (1.5, -1.5) {$v_6$};
      \draw[->] (v3)--(v2);
      \draw[->] (v2)--(v1);
      \draw[->] (v1)--(v4);
      \draw[->] (v4)--(v5);
      \draw[->] (v5)--(v3);
      \draw[->] (v1)--(v6);
      \draw[->] (v6)--(v3);
    \end{tikzpicture}
    \caption{Digraph $\mathcal{B}^{\,3}_{5,4}$.}
    \label{fig:butterfly34}
  \end{subfigure}\hfill
  %
  \begin{subfigure}[b]{0.32\textwidth}
    \centering
    \begin{tikzpicture}[>=stealth,
        node/.style={circle,draw,minimum size=6mm,inner sep=1pt}
    ]
      \node[node] (v1) at (0,0) {$v_1$};
      \node[node] (v2) at (1.5,0) {$v_2$};
      \node[node] (v5) at (-0.75,1.5)   {$v_5$};
      \node[node] (v4) at (0.75,1.5) {$v_4$};
      \node[node] (v3) at (2.25,1.5) {$v_3$};
      \node[node] (v6) at (2.25,-1.5) {$v_6$};
      \node[node] (v7) at (-0.75, -1.5) {$v_7$};
      \draw[->] (v2) -- (v1);
      \draw[->] (v3) -- (v2);
      \draw[->] (v4) -- (v3);
      \draw[->] (v5) -- (v4);
      \draw[->] (v1) -- (v5);
      \draw[->] (v1) -- (v7);
      \draw[->] (v7) -- (v6);
      \draw[->] (v6) -- (v2);
    \end{tikzpicture}
    \caption{Digraph $\mathcal{B}^{\,2}_{5,4}$.}
    \label{fig:butterfly24}
  \end{subfigure}\hfill
  %
  \begin{subfigure}[b]{0.32\textwidth}
    \centering
    \begin{tikzpicture}[>=stealth,
        node/.style={circle,draw,minimum size=6mm,inner sep=1pt}
    ]
      \node[node] (c) at (0,0) {$v_1$};

      \node[node] (t2) at (-2.25, 1.5) {$v_5$};
      \node[node] (t3) at (-0.75, 1.5) {$v_4$};
      \node[node] (t4) at ( 0.75, 1.5) {$v_3$};
      \node[node] (t5) at ( 2.25, 1.5) {$v_2$};

      \node[node] (b6) at (-2.25,-1.5) {$v_8$};
      \node[node] (b7) at ( 0.0,-1.5) {$v_7$};
      \node[node] (b8) at ( 2.25,-1.5) {$v_6$};

      \draw[->] (c)  -- (t2);
      \draw[->] (t2) -- (t3);
      \draw[->] (t3) -- (t4);
      \draw[->] (t4) -- (t5);
      \draw[->] (t5) -- (c);

      \draw[->] (c)  -- (b6);
      \draw[->] (b6) -- (b7);
      \draw[->] (b7) -- (b8);
      \draw[->] (b8) -- (c);
    \end{tikzpicture}
    \caption{Digraph $\mathcal{B}^{\,1}_{5,4}$.}
    \label{fig:newdigraph}
  \end{subfigure}

  \caption{Comparison of $\mathcal{B}^{\,3}_{5,4}$, $\mathcal{B}^{\,2}_{5,4}$, and $\mathcal{B}^{\,1}_{5,4}$.}
  \label{fig:three_digraphs}
\end{figure}

\noindent\textbf{Remark.} It is worth noting that within the class $\mathcal{SC}_{m+1}(m)$ for a fixed $m$, the topological entropy $h(G)$ acts as a complete invariant. While Corollary~\ref{cor:butterfly_t_irrelevant} establishes that the intersection parameter $t$ does not affect the entropy , the structural constraint $k_1 + k_2 - t = m$ ensures that for a fixed $m$, no two non-isomorphic digraphs can share the same ordered pair $(k_1, k_2)$ with $k_1 \le k_2$. Consequently, isentropic but non-isomorphic butterfly digraphs can only exist across different orders $m$, meaning they must have a different number of vertices and edges. This phenomenon is illustrated in Figure~\ref{fig:three_digraphs}.

\begin{lemma}\label{distance}
Let $G$ be a strongly connected digraph on $m$ vertices, and let $v\in V$ satisfy
$\deg^+(v)=k\le m$. Let $v_i$ be the vertices such that $(v,v_i)\in E$ for
$i=1,2,\dots,k$, and set $S(v)=\{1,2,\dots,k\}$. Then there exists at most one
$i\in S(v)$ such that $\operatorname{dist}(v_i,v)=m-1$. Moreover, if such an index
$i\in S(v)$ exists, then there is a simple path $P_{v_iv}$ with $|P_{v_iv}|=m-1$,
and for every $j\in S(v)\smallsetminus\{i\}$ there exists a simple path $P_{v_jv}$
with $|P_{v_jv}|\le m-2$.
\end{lemma}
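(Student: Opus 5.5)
The plan is to prove the first claim, uniqueness, by counting vertices along a shortest path. The other two claims then follow from the observation that a shortest path is automatically simple.

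\emph{Uniqueness.} Suppose $i\in S(v)$ satisfies $\operatorname{dist}(v_i,v)=m-1$, and fix a shortest path $P_{v_iv}=x_0x_1\dots x_{m-1}$ with $x_0=v_i$ and $x_{m-1}=v$. A shortest path never repeats a vertex: if it did, deleting the closed subpath between the two occurrences would give a strictly shorter path from $v_i$ to $v$. So the $m$ vertices $x_0,\dots,x_{m-1}$ are pairwise distinct, and since $|V|=m$ they exhaust $V$. Now let $j\in S(v)\smallsetminus\{i\}$, so $v_j\neq v_i$.
\begin{itemize}
\item If $v_j=v$ (a loop at $v$), then $\operatorname{dist}(v_j,v)=0\le m-2$.
\item Otherwise $v_j=x_s$ for some $s$ with $1\le s\le m-2$, because $v_j\neq x_0=v_i$ and $v_j\neq x_{m-1}=v$. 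The tail $x_s\dots x_{m-1}$ is then a path from $v_j$ to $v$ of length $m-1-s\le m-2$.
\end{itemize}
In either case $\operatorname{dist}(v_j,v)\le m-2<m-1$, so no second index with distance $m-1$ exists. This proves the first claim.

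\emph{Simple paths.} The simple path required in the second claim is the shortest path $P_{v_iv}$ above, whose length is $m-1$. For each $j\neq i$, the tail $x_s\dots x_{m-1}$ is a subpath of a simple path, hence simple, with length at most $m-2$. In the loop case the trivial path $v$ of length $0$ serves. Alternatively, one can take a shortest path from $v_j$ to $v$ directly, which is simple for the same reason as before.

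\emph{Main obstacle.} The only delicate point is the degenerate case where a successor coincides with $v$ itself, which is possible because loops are allowed. I handle it separately as above. I also note that $\operatorname{dist}(v_i,v)\le m-1$ always holds, since a shortest path has at most $m$ vertices, so $m-1$ is the extremal value. Beyond this, the argument is pigeonhole on the $m$ distinct vertices of a longest shortest path.
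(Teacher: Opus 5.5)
Your proof is correct and follows the paper's own argument. In both, a shortest path of length $m-1$ is simple, so it covers all $m$ vertices; every other successor $v_j$ therefore lies on it, and its tail gives a simple path to $v$ of length at most $m-2$. Your separate treatment of the loop case $v_j=v$, and your check that $s\ge 1$ because $v_j\neq v_i$, only make explicit details that the paper leaves implicit.
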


\begin{proof}
Assume that there exists $i\in S(v)$ such that $\text{dist}(v_i,v)=m-1$.
By definition of dist$(\cdot,\cdot)$, there exists a shortest directed path from $v_i$ to $v$
of length $m-1$; denote it by $P_{v_iv}$.
Since $P_{v_iv}$ is shortest, it is simple, and because $|P_{v_iv}|=m-1$ in a digraph
with $m$ vertices, the path $P_{v_iv}$ visits all vertices of $G$.

In particular, for every $j\in S(v)\smallsetminus\{i\}$ the vertex $v_j$ lies on $P_{v_iv}$.
Hence the subpath of $P_{v_iv}$ from $v_j$ to $v$ is a directed path $P_{v_jv}$
of length strictly smaller than $m-1$. Therefore $|P_{v_jv}|\le m-2$, and consequently
$\text{dist}(v_j,v)\le m-2$ for all $j\neq i$.
\end{proof}

\begin{lemma}\label{lem:three_fr_dv_small}
Let $G$ be a strongly connected digraph on $m$ vertices. Suppose there exist distinct vertices $v$ and $w$
with $\deg^{+}(v)=\deg^{+}(w)=2$.  Denote by $v_1$ and $v_2$ the vertices satisfying $(v,v_1),(v,v_2)\in E$ and assume that
\[
\operatorname{d}(v):=\max_{i=1,2}\{\operatorname{dist}(v_i,v)\}\le m-2.
\]
Then $v$ admits three distinct first-return paths $fr_1,fr_2,fr_3$ such that
\[
|fr_1|\le m-1,\qquad |fr_2|\le m-1,\qquad |fr_3|\ge 1.
\]
\end{lemma}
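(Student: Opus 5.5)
The plan is to build two first-return paths at $v$ from the two out-edges, and a third one that takes a detour through $w$. The hypothesis $\operatorname{d}(v)\le m-2$ bounds the first two lengths. The presence of a second branching vertex $w\neq v$ produces the third path, whose only requirement is $|fr_3|\ge 1$, which holds for every closed path.

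\emph{Two short first-return paths.} For $i=1,2$, choose a shortest directed path $P_{v_iv}$ from $v_i$ to $v$. A shortest path is simple and visits $v$ only at its end, unless $v_i=v$, in which case the path has length $0$. Hence
\[
fr_i:=vv_i\circ P_{v_iv}
\]
is a first-return path to $v$, and $|fr_i|=1+\operatorname{dist}(v_i,v)\le 1+(m-2)=m-1$. The two paths are distinct because their second vertices $v_1\neq v_2$ differ (no multiple edges).

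\emph{A third first-return path through $w$.} Let $w_1,w_2$ be the two successors of $w$. By strong connectivity, $w$ is reachable from $v$. I would split into two cases.

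\textbf{Case 1:} $w$ lies on $fr_1$ or $fr_2$, say on $fr_1$. Then $w\neq v$ is an interior vertex of $fr_1$, and $fr_1$ continues from $w$ along one out-edge, say to $w_1$. Starting again at $v$, I follow $fr_1$ up to $w$, take the edge $(w,w_2)$ instead, and then go from $w_2$ to $v$ along a shortest path. Cutting this walk at its first return to $v$ gives a first-return path $fr_3$.

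The main obstacle is to ensure that $fr_3$ is genuinely new, i.e.\ $fr_3\notin\{fr_1,fr_2\}$. The key fact is that the prefix of $fr_3$ up to $w$ coincides with that of $fr_1$, so $fr_3$ is a first-return path starting with $v v_1$. It leaves $w$ via $w_2$ rather than $w_1$, unless it has already returned to $v$ before reaching $w$. That cannot happen, because the prefix of $fr_1$ before $w$ avoids $v$. Thus $fr_3\neq fr_1$.

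If also $fr_3=fr_2$, then $fr_2$ shares the initial vertex $v_1$ with $fr_1$, which is impossible since $v_1\neq v_2$. So $fr_3$ is distinct from both. (If $w_2=v$, then $fr_3$ simply ends there; it is still distinct.)

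\textbf{Case 2:} $w$ lies on neither $fr_1$ nor $fr_2$. I take a shortest path from $v$ to $w$, going first through $v_j$ for some $j$, then take the edge $(w,w_1)$, and then a shortest path from $w_1$ back to $v$. I truncate at the first return to $v$, after possibly replacing the initial segment by a shortest path from $v$ to $w$ that avoids intermediate returns to $v$; such a path exists because a shortest path from $v$ to $w$ never revisits $v$. This yields a first-return path that contains $w$, hence differs from $fr_1$ and $fr_2$.

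In both cases $|fr_3|\ge 1$ holds trivially. The delicate point throughout is the bookkeeping of first returns, namely checking that the truncation does not remove the deviation at $w$. This is handled by noting that the segment from $v$ to $w$ is chosen inside a simple path that avoids $v$ internally.
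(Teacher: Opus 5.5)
Your proof is correct and follows the paper's argument: you use the same two shortest-path returns $fr_1,fr_2$, the same case split on whether $w$ lies on one of them, and in Case~1 the same switch from $(w,w_1)$ to $(w,w_2)$ followed by a path back to $v$. The only difference is Case~2. There the paper branches off $fr_i$ at a vertex $u\in V(fr_1)\cup V(fr_2)$ closest to $w$, whereas you take a direct round trip $v\to w\to v$ along shortest paths. Your version is simpler and suffices, since only the first-return property and the presence of $w$ matter; the detour through $(w,w_1)$ and the truncation are harmless but unnecessary.
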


\begin{proof}
By Lemma~\ref{distance}, there exist simple paths $P_{v_1v}$ and $P_{v_2v}$ from $v_1$ to $v$ and from $v_2$ to $v$,
respectively, with
\[
|P_{v_1v}|\le m-2,\qquad |P_{v_2v}|\le m-2.
\]
Concatenating with the edges $vv_1$ and $vv_2$ yields two first-return paths to $v$,
\[
fr_1:=vv_1\circ P_{v_1v},\qquad fr_2:=vv_2\circ P_{v_2v},
\]
and hence
\[
|fr_1|\le m-1,\qquad |fr_2|\le m-1.
\]
We proceed to construct a third first-return path $fr_3$, considering two cases.

\medskip
\noindent\textbf{Case 1.}
The vertex $w$ lies on at least one of the paths $P_{v_1v}$ and $P_{v_2v}$.
Without loss of generality, assume that $w$ lies on $P_{v_1v}$.
Since $v\neq w$, the vertex $w$ is not the terminal vertex of
$P_{v_1v}$.
Let $w_1$ be the out-neighbour of $w$ used by $P_{v_1v}$, i.e., $P_{v_1v}$ traverses the edge $(w,w_1)\in E$.
Let $P_{v_1w}$ denote the subpath of $P_{v_1v}$ from $v_1$ to $w$.
Since $\deg^{+}(w)=2$, there exists another out-neighbour $w_2\neq w_1$ with $(w,w_2)\in E$.
Choose a simple directed path $P_{w_2v}$ from $w_2$ to $v$.
Then we obtain a third first-return path
\[
fr_3:=vv_1\circ P_{v_1w}\circ ww_2\circ P_{w_2v},
\]
of length
\[
|fr_3|=\bigl|\,vv_1\circ P_{v_1w}\circ ww_2\circ P_{w_2v}\,\bigr|=:l_3.
\]
The path $fr_3$ is distinct from $fr_1$, since it uses the edge
$(w,w_2)$ instead of $(w,w_1)$, and it is distinct from $fr_2$,
since their first edges are different.

\medskip
\noindent\textbf{Case 2.}
The vertex $w$ lies on neither $P_{v_1v}$ nor $P_{v_2v}$. Consider the two first-return paths $fr_1$ and $fr_2$ defined above. Since $G$ is strongly connected, $w$ is reachable from every vertex, in particular from every vertex in $V(fr_1)\cup V(fr_2)$. Choose a vertex
\[
u\in V(fr_1)\cup V(fr_2)
\]
such that
\[
\operatorname{dist}(u,w)
=
\min\bigl\{
\operatorname{dist}(x,w)
:
x\in V(fr_1)\cup V(fr_2)
\bigr\}.
\]
Choose $i\in\{1,2\}$ such that $u\in V(fr_i)$, and let $u_1$ be the successor of $u$ on $fr_i$. Let $P_{uw}$ be a shortest directed path from $u$ to $w$. If $P_{uw}$ began with the edge $(u,u_1)$, then
\[
\operatorname{dist}(u,w)
=
1+\operatorname{dist}(u_1,w),
\]
and therefore
\[
\operatorname{dist}(u_1,w)
<
\operatorname{dist}(u,w),
\]
contradicting the choice of $u$, since
$u_1\in V(fr_1)\cup V(fr_2)$. Hence $P_{uw}$ begins with an outgoing edge of $u$ different from $(u,u_1)$.

Moreover, no internal vertex of $P_{uw}$ belongs to
$V(fr_1)\cup V(fr_2)$. Indeed, if an internal vertex
$x$ of $P_{uw}$ belonged to $V(fr_1)\cup V(fr_2)$, then the subpath of $P_{uw}$ from $x$ to $w$ would imply
\[
\operatorname{dist}(x,w)
<
\operatorname{dist}(u,w),
\]
again contradicting the choice of $u$. In particular, $v$ is not an internal vertex of $P_{uw}$.

Let $P_{vu}$ denote the subpath of $fr_i$ from $v$ to $u$, and choose a shortest directed path $P_{wv}$ from $w$ to $v$. We define
\[
fr_3
:=
P_{vu}\circ P_{uw}\circ P_{wv}.
\]
The path $P_{vu}$ does not contain $v$ except at its initial vertex, since it is a subpath of the first-return path $fr_i$. As shown above, $P_{uw}$ does not contain $v$, and, since $P_{wv}$ is a shortest path from $w$ to $v$, it does not contain $v$ before its terminal vertex. Consequently, $fr_3$ is a first-return path to $v$.

Finally, $w\in V(fr_3)$, whereas
\[
w\notin V(fr_1)\cup V(fr_2).
\]
Therefore, $fr_3$ is distinct from both $fr_1$ and $fr_2$. Thus, $v$ admits three distinct first-return paths $fr_1$, $fr_2$, and $fr_3$, as required.
\end{proof}

\begin{lemma}\label{srovnani_m+k_hran_motyl}
Let $\mathcal{SC}_{m+k}(m)$ denote the class of all strongly connected digraphs on $m$ vertices and $m+k$ edges (up to isomorphism), where $k\ge 2$ and $m\geq 2$.
Then
\[
h\!\left(\mathcal{B}_{m-1,m}^{\,m-1}\right)<h(G)
\qquad\text{for every }G\in\mathcal{SC}_{m+k}(m)\text{ with }k\ge 2.
\]
Moreover, if $m\geq 3$, then
\[ 
    h(G)>\frac{\ln2}{m-1} \qquad \text{for every } G\in\mathcal{SC}_{m+k}(m)\text{ with }k\ge 2.
 \]
\end{lemma}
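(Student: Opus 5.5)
The plan is to bound the radius of convergence $R$ of $P(z)$ from above using Lemma~\ref{compare}: find a vertex $v$ with three distinct first-return paths of lengths $l_1,l_2,l_3$, and compare with the unique root in $(0,1)$ of $1-z^{l_1}-z^{l_2}-z^{l_3}$. For a fixed $z\in(0,1)$ this polynomial is strictly smaller than $1-z^{l_1}-z^{l_2}$, so its root is strictly smaller. By Lemma~\ref{monotonicity} and Corollary~\ref{cor:butterfly_t_irrelevant}, both claims then reduce to showing $R<z(m-1,m)$ and, for $m\ge3$, $R<2^{-1/(m-1)}=z(m-1,m-1)$. Since $2^{-1/(m-1)}<z(m-1,m)$ by Lemma~\ref{monotonicity}, the second inequality implies the first whenever it is available. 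Proposition~\ref{prop:entropy-radius} and Corollary~\ref{cor:one_vertex_radius} convert statements about $R_i$ into statements about $h(G)$.

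\textbf{Step 1 (degree count).} Since $\sum_u\deg^+(u)=m+k\ge m+2$ and every out-degree is at least $1$, either some vertex has out-degree at least $3$, or there are two distinct vertices $v,w$ with out-degree at least $2$. For $m=2$ the only member of $\mathcal{SC}_{m+k}(m)$ with $k\ge2$ is the complete digraph with loops, or the digraph with exactly one loop. These can be handled directly: for one loop the first returns have lengths $1,2$, and for two loops the first returns at a vertex have lengths $1,2$ together with further ones. Either gives a root below $z(1,2)$.

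\textbf{Step 2 (case $\deg^+(v)\ge3$).} Take three successors and concatenate each edge with a shortest path back to $v$. This gives three distinct first-return paths. By Lemma~\ref{distance}, at most one has length $m$ and the other two have length at most $m-1$. Then $z^{l_1}+z^{l_2}\ge 2z^{m-1}=1$ at $z=2^{-1/(m-1)}$, and adding $z^{l_3}>0$ gives a strict inequality. Hence $R_Q<2^{-1/(m-1)}$, and both claims follow.

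\textbf{Step 3 (two vertices of out-degree $2$).} Restrict to two out-edges at each of $v$ and $w$. If $\operatorname{d}(v)\le m-2$, Lemma~\ref{lem:three_fr_dv_small} gives first-return lengths $l_1,l_2\le m-1$ and some $l_3\ge1$, and the computation of Step~2 finishes the argument. By symmetry the same holds if $\operatorname{d}(w)\le m-2$.

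The remaining case is $\operatorname{d}(v)=\operatorname{d}(w)=m-1$, which I expect to be the main obstacle. Here Lemma~\ref{distance} gives a Hamiltonian cycle through $v$, with first-return length $m$, and a second first-return path of length at most $m-1$. Case~1 of Lemma~\ref{lem:three_fr_dv_small} applies because $w$ lies on the Hamiltonian path, so it yields a third first-return path. This already gives $1-z^{m}-z^{l_2}-z^{l_3}$ with $l_2\le m-1$, which proves the first claim, since its root is below $z(m-1,m)$.

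For the bound $\ln2/(m-1)$ I need more, namely $z^m+z^{l_3}\ge z^{m-1}$ at $z=2^{-1/(m-1)}$, or additional first-return paths. I would work on the Hamiltonian cycle $v=x_0,x_1,\dots,x_{m-1}$. The extra out-edges of $v$ and $w$ are chords of this cycle. A forward chord produces a first-return path of length at most $m-1$. A backward chord $x_a\to x_b$ produces a cycle avoiding $v$, which can be traversed $j$ times to give first-return paths of lengths $m+j\,c$ for every $j\ge0$, where $c$ is the length of that cycle. The plan is to feed this infinite family into Lemma~\ref{compare}. The condition $\operatorname{d}(w)=m-1$ also forces a Hamiltonian cycle through $w$, which strongly restricts where the chords can lie. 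I expect these constraints to leave few configurations, in each of which the first-return lengths at $v$ or at $w$ give $\sum z^{l_j}>1$ at $z=2^{-1/(m-1)}$.
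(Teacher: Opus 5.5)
\textbf{Gap: the ``Moreover'' inequality in the case $\operatorname{d}(v)=m-1$ is not proved.} Your argument does establish the first inequality in every case, including $\operatorname{d}(v)=m-1$: the paths of lengths $m$, $l_2\le m-1$ and $l_3$ push the root below $z(m-1,m)$. Your Steps~2 and~3 with $\operatorname{d}(v)\le m-2$ coincide with the paper's Case~1 and the first half of its Case~2. For $h(G)>\ln 2/(m-1)$ with $\operatorname{d}(v)=m-1$, however, you only outline a plan: chords, a Hamiltonian cycle through $w$, and the hope that ``few configurations'' remain. You never verify $\sum_j z_0^{l_j}>1$ at $z_0=2^{-1/(m-1)}$, and this is the one part of the lemma that needs work. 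The extra condition $\operatorname{d}(w)=m-1$ and any enumeration of configurations are unnecessary; only $\operatorname{d}(v)=m-1$ is used.

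\textbf{The paper's route.} It makes your pumping idea precise. Write the Hamiltonian path as $P_{v_1v}=u_1\dots u_m$ and $w=u_a$. The second out-neighbour $w_2=u_b$ of $w$ cannot satisfy $b>a+1$. Otherwise, replacing $(u_a,u_{a+1})$ by $(w,w_2)$ gives a path from $v_1$ to $v$ shorter than $m-1$. Hence $b\le a$, and $(w,w_2)$ closes a cycle through $w$ that avoids $v$, of length $l_c\le m-1$. Pumping this cycle gives first returns of lengths $m+jl_c$ for $j\ge 0$. Together with the return of length at most $m-1$ through $v_2$, this yields
\[
G(z_0)\ \ge\ \tfrac12+\frac{z_0^{m}}{1-z_0^{l_c}}\ \ge\ \tfrac12+\frac{z_0^{m}}{1-z_0^{m-1}}\ =\ \tfrac12+z_0\ >\ 1 .
\]

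\textbf{A shorter fix using your own three paths.} They already suffice once you bound $l_3$. The Case~1 construction of Lemma~\ref{lem:three_fr_dv_small} does not use the hypothesis $\operatorname{d}(v)\le m-2$, so you may apply it here. In that construction $|P_{v_1w}|\le m-2$, because $w\neq v$ lies on the Hamiltonian path. Also $|P_{w_2v}|\le m-1$, because it is a simple path. Hence $l_3\le 2m-1$, and
\[
z_0^{l_2}+z_0^{m}+z_0^{l_3}\ \ge\ \tfrac12+\tfrac{z_0}{2}+\tfrac{z_0}{4}\ >\ 1,
\]
since $z_0\ge 2^{-1/2}>2/3$ for $m\ge 3$. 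Lemma~\ref{compare} then gives $R_v<z_0$, exactly as in your Step~2. This finite bound avoids both the shortcut analysis and the infinite family.

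\textbf{A small slip at $m=2$.} For $m=2$ and $k\ge 2$, the class contains only the complete digraph with both loops. The one-loop digraph has $m+1=3$ edges and is $\mathcal{B}^{\,1}_{1,2}$ itself, so it does not belong to the class. Moreover, its root equals $z(1,2)$ rather than lying below it.
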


\begin{proof}
Let $m=2$. Then $k\in\{1,2\}$ (no other values of $k$ are possible without multiple edges), and
\[
\mathcal{SC}_{3}(2)=\left\{\mathcal{B}^{\,1}_{1,2}\right\}.
\]
Moreover, $\mathcal{SC}_{4}(2)$ contains only the complete digraph on two vertices; denote it by $K_2$. That is,
\[
V(K_2)=\{v_1,v_2\}, \qquad E(K_2)=V(K_2)\times V(K_2).
\]

By Lemma~\ref{srovnavani_motyl_minimum},
\[
h\!\left(\mathcal{B}^{\,1}_{1,2}\right)=-\ln(z_0)<\ln 2.
\]
On the other hand, Lemma~\ref{lem:R_bounds} yields
\[
h(K_2)=-\ln\!\left(\frac12\right)=\ln 2.
\]

Now, let $m\geq 3$. Fix $k\ge 2$ and let $G\in\mathcal{SC}_{m+k}$. Then one of the following two situations occurs:
\begin{enumerate}
    \item there exists $v\in V$ such that $\deg^+(v)\ge 3$,
    \item there exist distinct vertices $v,w\in V$ such that $\deg^+(v)\ge 2$ and $\deg^+(w)\ge 2$.
\end{enumerate}

\noindent\textbf{Case 1.} Assume that there exists $v\in V$ with $\deg^+(v)\ge 3$.
Then there exist three pairwise distinct vertices $v_1,v_2,v_3\in V$ such that
$(v,v_1),(v,v_2),(v,v_3)\in E$. For each $i\in\{1,2,3\}$, let $P_{v_iv}$ be a shortest directed path from $v_i$ to $v$.
By Lemma~\ref{distance}, at most one of the paths $P_{v_1v},P_{v_2v},P_{v_3v}$ has length $m-1$; hence at least two of them have length strictly smaller than $m-1$.
Without loss of generality, assume that $|P_{v_1v}|<m-1$ and $|P_{v_2v}|<m-1$.
Composing these paths with the edges $(v,v_1)$ and $(v,v_2)$, respectively, we obtain two distinct first-return paths at $v$, say $fr_1$ and $fr_2$, with
\[
|fr_1|=\bigl|\,vv_1\circ P_{v_1v}\bigr|=l_1\le m-1,
\qquad
|fr_2|=\bigl|\,vv_2\circ P_{v_2v}\bigr|=l_2\le m-1.
\]
Moreover, there exists a path $P_{v_3v}$ of length at most $m-1$. Hence there is a third first-return path $fr_3$ satisfying
\[
|fr_3|=\bigl|\,vv_3\circ P_{v_3v}\,\bigr|=l_3\le m.
\]

We now apply Lemma~\ref{compare}. In the present setting,
\[
G(z)=z^{l_1}+z^{l_2}+z^{l_3}
\qquad\text{and}\qquad
Q(z)=\frac{1}{1-G(z)}=\frac{1}{1-z^{l_1}-z^{l_2}-z^{l_3}}.
\]
Let $R_Q$ denote the radius of convergence of $Q(z)$, i.e.\ the unique positive solution of
$1-z^{l_1}-z^{l_2}-z^{l_3}=0$. For every $z\in(0,1)$ (and in particular for $z=R_Q$) we have
\[
1-z^{l_1}-z^{l_2}-z^{l_3}\le 1-2z^{m-1}-z^m< 1-2z^{m-1}.
\]
Consequently,
\[
R_Q< 2^{-\frac{1}{m-1}}.
\]
By Lemma~\ref{compare}, if $R_v$ denotes the radius of convergence of the generating function $P_v(z)$, then
\[
R_v\le R_Q< 2^{-\frac{1}{m-1}},
\]
and therefore
\[
h(G)= -\ln(R_v)> \frac{\ln 2}{m-1}.
\]
Finally, Corollary~\ref{srovnavani_motyl_minimum} yields
$h\!\left(\mathcal{B}_{m-1,m}^{\,m-1}\right)<\frac{\ln 2}{m-1}$, and the claim follows.

\noindent\textbf{Case 2.}
Assume that there exist two vertices $v,w\in V$ such that $\deg^+(v)\ge 2$ and
$\deg^+(w)\ge 2$. Since Case~1 has already been settled, we may restrict ourselves to
the situation where there exist $v,w\in V$ such that $\deg^+(v)=2$ and $\deg^+(w)=2$.
Fix the vertex $v$ and denote by $v_1$ and $v_2$ the vertices satisfying
$(v,v_1),(v,v_2)\in E$. Set
\[
\operatorname{d}(v)=\max_{i=1,2}\{\text{dist}(v_i,v)\}.
\]
If $\operatorname{d}(v)<m-1$, then by Lemma~\ref{lem:three_fr_dv_small} there exist three distinct
first-return paths $fr_1$, $fr_2$, and $fr_3$ at $v$ with lengths
$l_1\leq m-1$, $l_2\leq m-1$, and $l_3\geq 1$, respectively.
Applying Lemma~\ref{compare} exactly as in Case~1, we have
\[
1-G(z)
=
1-z^{l_1}-z^{l_2}-z^{l_3}
\leq
1-2z^{m-1}-z^{l_3}
<
1-2z^{m-1}.
\]
Therefore,
\[
R_v\leq R_Q<2^{-\frac{1}{m-1}},
\]
and hence
\[
h(G)>\frac{\ln 2}{m-1}.
\]

It remains to consider the case $\operatorname{d}(v)=m-1$. Then there exists a vertex
$v_i$, $i=1,2$, such that $\text{dist}(v_i,v)=m-1$. Without loss of generality, assume
that this vertex is $v_1$. Hence, by Lemma~\ref{distance}, there exists a simple path
$P_{v_1v}$ of length $m-1$. Let $w_1$ be the out-neighbour of $w$ such that the path
$P_{v_1v}$ uses the edge $(w,w_1)$. Since $\deg^+(w)=2$, let $w_2$ denote the other
out-neighbour of $w$. Write the path $P_{v_1v}$ as a sequence of vertices
\[
P_{v_1v}=u_1u_2\dots u_{m-1}u_m,
\]
with $u_1=v_1$ and $u_m=v$. Since this path is simple and has length $m-1$, it contains
all remaining vertices of the digraph, in particular $v_2$, $w$, $w_1$, and $w_2$. In
other words, there exist indices $i_1,i_2,i_3\in\{1,2,\dots,m\}$ such that
$u_{i_1}=w$, $u_{i_2}=w_1$, and $u_{i_3}=w_2$, and there exists an index
$i_4\in\{2,3,\dots,m\}$ such that $u_{i_4}=v_2$ (since $v_2\neq v_1$). Since the path
$P_{v_1v}$ uses the edge $(w,w_1)$, we have $i_2=i_1+1$. We distinguish two subcases:
\begin{enumerate}
    \item $i_3>i_2$,
    \item $i_3<i_2$.
\end{enumerate}

\noindent\textbf{Subcase 2.1.}
Assume that $i_3>i_2$. Then, instead of using the edge $(w,w_1)$ in $P_{v_1v}$, we may
use the edge $(w,w_2)$ and thereby shorten the path from $v_1$ to $v$ by at least one
(since we skip at least the vertex $w_1$). This produces a directed path from $v_1$ to
$v$ of length at most $m-2$, contradicting $\text{dist}(v_1,v)=m-1$.

\noindent\textbf{Subcase 2.2.}
Assume that $i_3<i_2$. Let $P_{v_1w}$ denote the subpath of $P_{v_1v}$ that starts at
$v_1$ and ends at $w$, and let $P_{wv}$ denote the subpath that starts at $w$ and ends
at $v$. The fact that $i_3<i_2$ implies that there exists a simple closed path $C_{ww}$ formed
by the edge $(w,w_2)$ together with a subpath of $P_{v_1w}$. Such a path has length
$l_c$, and in particular $l_c\le m-1$, since the vertex $v$ is not contained in it. The
extremal case $l_c=m-1$ may occur, e.g.\ if $w_1=v$ and the cycle visits all vertices
except $v$.

By $(C_{ww})^t$ we mean the concatenation
$C_{ww}\circ C_{ww}\circ\dots\circ C_{ww}$ repeated $t$ times. From these pieces we can
construct infinitely many first-return paths at the vertex $v$ of lengths
$l_t=m+tl_c\le m+t(m-1)$, $t\in\mathds{N}_0$:
\[
|vv_1\circ P_{v_1w}\circ (C_{ww})^t\circ P_{wv}|
=|vv_1|+t|C_{ww}|+|P_{v_1v}|
=1+tl_c+m-1
=m+tl_c\le m+t(m-1).
\]
Moreover, since $P_{v_1w}$ and $P_{wv}$ are subpaths of the simple path $P_{v_1v}$, they
are simple as well, and the vertex $v$ appears only at the end of the subpath $P_{wv}$.
Hence
\[
fr_t:=vv_1\circ P_{v_1w}\circ (C_{ww})^t\circ P_{wv}
\]
is indeed a first-return path for every $t\in\mathds{N}_0$.

We also know that there exists an index $i_4\in\{2,3,\dots,m\}$ such that $u_{i_4}=v_2$.
Therefore, there exists a path $P_{v_2v}$ of length at most $m-2$. Concatenating with the
edge $(v,v_2)$, we obtain a first-return path of length $r_1\le m-1$.

We now define the function $G(z)$:
\[
G(z)=z^{r_1}+\sum_{t=0}^\infty z^{m+tl_c}
=z^{r_1}+\frac{z^m}{1-z^{l_c}}.
\]
Consider the value of $G(z)$ at $z_0=2^{-\frac{1}{m-1}}$. Then $z_0^{m-1}=\frac{1}{2}$
and $z_0^m=\frac{z_0}{2}$. Moreover,
\[
G(z_0)\ge z_0^{m-1}+\frac{z_0^m}{1-z_0^{l_c}}
=\frac{1}{2}+\frac{z_0^m}{1-z_0^{l_c}}.
\]
Since $l_c\le m-1$, we have $1-z^{l_c}\le 1-z^{m-1}$ and thus
$\frac{1}{1-z^{l_c}}\ge \frac{1}{1-z^{m-1}}$. Therefore,
\[
G(z_0)\ge \frac{1}{2}+\frac{z_0^m}{1-z_0^{m-1}}
=\frac{1}{2}+2z_0^m
=\frac{1}{2}+z_0
>\frac{1}{2}+\frac{1}{2}=1.
\]
Consequently,
\[
1-G(z_0)<0,
\]
and hence there exists a solution $r_0\in(0,z_0)$ such that
\[
1-G(r_0)=0.
\]
Since $1-G(z)$ is strictly decreasing on $(0,1)$, $r_0$ is the unique positive singularity of $Q(z)$ in $(0,1)$. Moreover, by definition,
\[
Q(z)=\sum_{n=0}^{\infty} q(n)z^n,
\]
where $q(n)\geq 0$ for all $n$. Hence, by Theorem~\ref{pring}, $r_0$ is the radius of convergence of $Q(z)$. Finally, Lemma~\ref{compare} and Corollary~\ref{srovnavani_motyl_minimum} yield the claim.

\end{proof}

\begin{theorem}\label{thm_global_minimum_motyl}
Let $m\ge 2$ and let $\mathcal{SC}^{h>0}(m)$ denote the class of all strongly connected digraphs
on $m$ vertices (up to isomorphism) with positive topological entropy. Then
\[
h\!\left(\mathcal{B}^{\,m-1}_{m-1,m}\right)\le h(G)
\qquad\text{for every }G\in\mathcal{SC}^{h>0}(m).
\]
Moreover, equality holds if and only if $G\cong \mathcal{B}^{\,m-1}_{m-1,m}$.
\end{theorem}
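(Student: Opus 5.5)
The plan is to partition $\mathcal{SC}^{h>0}(m)$ by the number of edges and reduce everything to the two results already proved for the near-minimal and denser regimes.

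First, let $G$ be strongly connected on $m$ vertices with $h(G)>0$. Every vertex has out-degree at least $1$, so $|E|\ge m$. If $|E|=m$, then every out-degree equals $1$. Strong connectivity then forces $G$ to be a single directed cycle through all vertices, so $G\cong\mathcal{C}_m$. By Lemma~\ref{lem:R_bounds} this gives $R=1$, hence $h(G)=0$, which is excluded. Therefore $|E|=m+k$ for some $k\ge 1$.

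Next, we split according to $k$.

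\textbf{Case $k=1$.} Here $G\in\mathcal{SC}_{m+1}(m)$. Corollary~\ref{srovnavani_motyl_minimum} gives $h(\mathcal{B}^{\,m-1}_{m-1,m})\le h(G)$, with equality if and only if $G\cong\mathcal{B}^{\,m-1}_{m-1,m}$. We should also record that $\mathcal{B}^{\,m-1}_{m-1,m}$ actually exists as a digraph on $m$ vertices with $m+1$ edges. Take a Hamiltonian cycle $v_1\to v_2\to\cdots\to v_m\to v_1$ and add the chord $(v_1,v_3)$; for $m=2$ add a loop at $v_1$ instead. This digraph has positive entropy by Lemma~\ref{lem:R_bounds}, so it lies in $\mathcal{SC}^{h>0}(m)$ and the bound is attained.

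\textbf{Case $k\ge 2$.} Here $G\in\mathcal{SC}_{m+k}(m)$, and Lemma~\ref{srovnani_m+k_hran_motyl} gives the strict inequality $h(\mathcal{B}^{\,m-1}_{m-1,m})<h(G)$. In particular equality cannot occur, and such $G$ is not isomorphic to the butterfly, which has only $m+1$ edges.

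Combining the two cases yields both the inequality and the characterization of equality. The only point needing care is uniqueness: it has to come from the strict statements in Corollary~\ref{srovnavani_motyl_minimum} (in particular the strict monotonicity in Lemma~\ref{monotonicity}) and in Lemma~\ref{srovnani_m+k_hran_motyl}. For $m=2$ we also rely on the separate check inside Lemma~\ref{srovnani_m+k_hran_motyl}, namely $h(K_2)=\ln 2>h(\mathcal{B}^{\,1}_{1,2})$. Since all the analytic work has already been done in those results, I expect no step here to be a genuine obstacle; this theorem is an assembly of them.
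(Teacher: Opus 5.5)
Your proposal is correct and follows essentially the same route as the paper. You split $\mathcal{SC}^{h>0}(m)$ by edge count $m+k$ with $k\ge 1$ (the case $k=0$ being $\mathcal{C}_m$, excluded via Lemma~\ref{lem:R_bounds}), then invoke Corollary~\ref{srovnavani_motyl_minimum} for $k=1$ and the strict inequality of Lemma~\ref{srovnani_m+k_hran_motyl} for $k\ge 2$; your explicit argument that $|E|=m$ forces a Hamiltonian cycle and your construction of $\mathcal{B}^{\,m-1}_{m-1,m}$ (showing it exists and has positive entropy) only fill in points the paper leaves implicit.
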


\begin{proof}
By Lemma~\ref{lem:R_bounds} we have
\[
\mathcal{SC}^{h>0}(m)=\bigcup_{k\ge 1}\mathcal{SC}_{m+k}(m).
\]
By Lemma~\ref{srovnani_m+k_hran_motyl} we have
\[
h\!\left(\mathcal{B}_{m-1,m}^{\,m-1}\right)<h(G)
\qquad\text{for every }G\in\mathcal{SC}_{m+k}(m)\text{ and every }k\ge 2.
\]
On the other hand, Corollary~\ref{srovnavani_motyl_minimum} yields
\[
h\!\left(\mathcal{B}_{m-1,m}^{\,m-1}\right) < h(G)
\quad \text{for every } G\in \mathcal{SC}_{m+1}(m)\smallsetminus \left\{\mathcal{B}_{m-1,m}^{\,m-1}\right\}.
\]
Therefore,
\[
h\!\left(\mathcal{B}_{m-1,m}^{\,m-1}\right)\le h(G)
\qquad\text{for every }G\in\mathcal{SC}^{h>0}(m),
\]
and equality holds if and only if $G\cong \mathcal{B}_{m-1,m}^{\,m-1}$.
\end{proof}

\begin{corollary}
Let $\varepsilon>0$. Define
\[
h_{\min}(m):=\min\{\,h(G): G\in \mathcal{SC}^{h>0}(m)\,\}.
\]
Then the minimal integer $m$ such that $h_{\min}(m)\le \varepsilon$
(equivalently, such that $h\!\left(\mathcal{B}^{\,m-1}_{m-1,m}\right)\le~\varepsilon$) is
\[
m_{\min}(\varepsilon)=\max\left\{2,\left\lceil\frac{\ln(e^\varepsilon+1)}{\varepsilon}\right\rceil\right\}.
\]
\end{corollary}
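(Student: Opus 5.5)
By Theorem~\ref{thm_global_minimum_motyl}, $h_{\min}(m)=h(\mathcal{B}^{\,m-1}_{m-1,m})=-\ln R_m$, where $R_m\in(0,1)$ is the unique root of $1-z^{m-1}-z^m=0$ (Corollary~\ref{polynom_m+1}). The plan has four steps: show that $h_{\min}(m)$ is strictly decreasing in $m$, so the set of admissible $m$ is an up-set; rewrite the condition $h_{\min}(m)\le\varepsilon$ as an explicit inequality in $m$; solve that inequality; and then account for the constraint $m\ge 2$.

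\textbf{Monotonicity.} Lemma~\ref{monotonicity} with $(n_1,n_2)=(m-1,m)$ and $(m_1,m_2)=(m,m+1)$ gives $R_m<R_{m+1}$. Hence $h_{\min}(m)$ is strictly decreasing. It follows that $\{m\ge 2: h_{\min}(m)\le\varepsilon\}$ is a terminal segment of the integers $m\ge 2$, and its least element is what we must compute.

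\textbf{Reformulation.} Put $f_m(z)=1-z^{m-1}-z^m=1-z^{m-1}(1+z)$. This function is strictly decreasing on $(0,1)$, positive on $(0,R_m)$ and negative on $(R_m,1)$. The condition $h_{\min}(m)\le\varepsilon$ means $R_m\ge e^{-\varepsilon}$. Since $e^{-\varepsilon}\in(0,1)$, this is equivalent to $f_m(e^{-\varepsilon})\ge 0$, i.e.
\[
e^{-\varepsilon(m-1)}\bigl(1+e^{-\varepsilon}\bigr)\le 1 .
\]
Multiplying through by $e^{\varepsilon m}$, which is legitimate because it is positive, gives the cleaner form
\[
e^{\varepsilon}+1\le e^{\varepsilon m}.
\]
Taking logarithms, this is equivalent to $m\ge \ln(e^\varepsilon+1)/\varepsilon$.

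\textbf{Solving and the constraint $m\ge 2$.} The least integer satisfying the last inequality is $\lceil \ln(e^\varepsilon+1)/\varepsilon\rceil$. However, the digraph $\mathcal{B}^{\,m-1}_{m-1,m}$ and the corollary are only defined for $m\ge 2$. So the minimal admissible $m$ is the maximum of $2$ and that ceiling, which is the claimed formula. Note that $\ln(e^\varepsilon+1)/\varepsilon>1$ always, since $e^\varepsilon+1>e^\varepsilon$, so the ceiling is already at least $2$ and the $\max$ is harmless.

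The only delicate point is the equivalence in the reformulation step, namely passing from a comparison of $R_m$ with $e^{-\varepsilon}$ to the sign of $f_m(e^{-\varepsilon})$. This rests entirely on the strict monotonicity of $f_m$ on $(0,1)$ together with $f_m(0)=1$ and $f_m(1)=-1$, both already noted in Lemma~\ref{lem:R_bounds}. Everything else is elementary algebra.
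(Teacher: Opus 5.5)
Your proof is correct and follows the paper's argument: you reduce $h_{\min}(m)\le\varepsilon$ to $R_m\ge e^{-\varepsilon}$, use the strict decrease of $1-z^{m-1}-z^m$ on $(0,1)$ to turn this into a sign condition at $z=e^{-\varepsilon}$, and solve for $m$. Your separate monotonicity-in-$m$ step is redundant, since the explicit inequality already shows the admissible set is an up-set, whereas your remark that $\ln(e^\varepsilon+1)/\varepsilon>1$ makes the $\max$ with $2$ vacuous is a correct observation that the paper does not make.
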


\begin{proof}
For $m\ge 2$, the minimum in $\mathcal{SC}^{h>0}(m)$ is attained by $\mathcal{B}^{\,m-1}_{m-1,m}$, whose
topological polynomial is $1-z^{m-1}-z^m$. Let $R_m\in(0,1)$ be its unique positive root. Hence $h\left(\mathcal{B}^{\,m-1}_{m-1,m}\right)\le \varepsilon$ is equivalent to
$R_m\ge e^{-\varepsilon}$.

Since $1-z^{m-1}-z^m$ is strictly decreasing on $(0,1)$, the inequality
$R_m\ge e^{-\varepsilon}$ holds if and only if 
\[
1-e^{-\varepsilon(m-1)}-e^{-\varepsilon m}\ge 0.
\]
Rearranging gives
\[
e^{-\varepsilon(m-1)}\le \frac{1}{1+e^{-\varepsilon}}.
\]
Taking logarithms and simplifying yields
\[
m\ge \frac{\ln(1+e^{-\varepsilon})}{\varepsilon}+1
=\frac{\ln(e^\varepsilon+1)}{\varepsilon}.
\]
Therefore the minimal integer $m$ satisfying $h_{\min}(m)\le \varepsilon$ is
\[
m_{\min}(\varepsilon)=\max\left\{2,\left\lceil\frac{\ln(e^\varepsilon+1)}{\varepsilon}\right\rceil\right\}.
\]
\end{proof}

\section{A pyramidal entropy diagram for $\mathcal{SC}_{m+1}(m)$}
In this section we study the class $\mathcal{SC}_{m+1}(m)$ of all strongly connected digraphs
with $m$ vertices and $m+1$ edges.
Lemma~\ref{lem:m+1_is_butterfly} states that every such digraph can be described as a
$(t,k_1,k_2)$-butterfly digraph $\mathcal{B}^{\,t}_{k_1,k_2}$ satisfying
\begin{equation}\label{vztah_k_1_k_2}
    k_1+k_2-t=m.
\end{equation}
For fixed $m$ and every $t\in\{1,2,\dots,m-1\}$ we define the subclasses
\[ 
    \mathcal{B}^{\,t}:=\left\{\mathcal{B}^{\,s}_{k_1,k_2}\in  \mathcal{SC}_{m+1}(m)\,|\,s=t\right\}.
 \]

We may assume without loss of generality that $k_1\le k_2$, since the digraphs
$\mathcal{B}^{\,t}_{k_1,k_2}$ and $\mathcal{B}^{\,t}_{k_2,k_1}$ are isomorphic. Moreover, for fixed $m$ we represent the digraph $\mathcal{B}^{\,t}_{k_1,k_2}$ by the ordered
pair $(k_1,k_2)$. We can do this, since for every such ordered pair there exists a unique, uniquely determined $t$ by \eqref{vztah_k_1_k_2}, and hence a uniquely determined digraph $\mathcal{B}^{\,t}_{k_1,k_2}$ and, for all $t\in\{1,2,\dots,m-1\}$, the classes $\mathcal{B}^{\,t}$ are pairwise disjoint.

\begin{lemma}\label{mnoziny_k_1k_2}
Let $m,t\in \mathds{N}$, $m\ge 2$, and $t\le m-1$. Define
\begin{align*}
    K_1^t&:=\left\{k_1\in\mathds{N}\,|\,\exists k_2\in\mathds{N}:\mathcal{B}^{\,t}_{k_1,k_2}\in\mathcal{B}^{\,t}\right\}, \\
    K_2^t&:=\left\{k_2\in\mathds{N}\,|\,\exists k_1\in\mathds{N}:\mathcal{B}^{\,t}_{k_1,k_2}\in\mathcal{B}^{\,t}\right\}.
\end{align*}
Then
\begin{align*}
    K_1^t&=\left\{t,t+1,\dots,\left\lfloor\frac{m+t}{2}\right\rfloor-1, \left\lfloor\frac{m+t}{2}\right\rfloor\right\}, \\
    K_2^t&=\left\{m,m-1,\dots,\left\lceil\frac{m+t}{2}\right\rceil+1,\left\lceil\frac{m+t}{2}\right\rceil\right\},
\end{align*}
and
\[ 
    \left|\mathcal{B}^{\,t}\right|=\left\lfloor\frac{m-t}{2}\right\rfloor+1.
 \]
Moreover,
\[ 
    \mathcal{SC}_{m+1}(m)=\bigcup_{t=1}^{m-1}\mathcal{B}^{\,t}.
 \]
\end{lemma}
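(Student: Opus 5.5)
The plan is to reduce everything to the arithmetic constraints in the definition of a butterfly digraph, namely
\[
k_1+k_2-t=m,\qquad 1\le t\le \min\{k_1,k_2\},
\]
together with the normalization $k_1\le k_2$ and the requirement that $\mathcal{B}^{\,t}_{k_1,k_2}$ actually exists. Fix $m\ge 2$ and $t\le m-1$. Then $k_2=m+t-k_1$, so a pair in $\mathcal{B}^{\,t}$ is determined by $k_1$ alone. The condition $k_1\le k_2$ becomes $2k_1\le m+t$, i.e.\ $k_1\le\lfloor (m+t)/2\rfloor$. The condition $t\le\min\{k_1,k_2\}=k_1$ gives $k_1\ge t$. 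Hence $K_1^t\subseteq\{t,\dots,\lfloor (m+t)/2\rfloor\}$. The image under $k_1\mapsto m+t-k_1$ is $K_2^t\subseteq\{\lceil (m+t)/2\rceil,\dots,m\}$, using $m+t-\lfloor (m+t)/2\rfloor=\lceil (m+t)/2\rceil$ and $m+t-t=m$. Since $t\le m-1$, we have $t\le\lfloor (m+t)/2\rfloor$, so these ranges are nonempty.

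For the reverse inclusion I will construct, for each admissible $k_1$, an explicit digraph as in Figure~\ref{motyl}. Take a common directed path $w_1\to\dots$ of $t$ shared vertices in the pattern $u_t\to\cdots\to u_2\to w_1$. Add two disjoint "arcs" of $k_1-t$ and $k_2-t$ new vertices leading from $w_1$ back to $u_t$. Vertex count gives $k_1+k_2-t=m$. Edge count gives $(t-1)+(k_1-t+1)+(k_2-t+1)=k_1+k_2-t+1=m+1$. Strong connectivity is clear.

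The one obstacle is avoiding multiple edges, which are excluded. If $k_1=t$ and $k_2=t$ simultaneously, both arcs would be the single edge $u_t\to w_1$ direction reversed, i.e.\ a doubled edge from $w_1$ to $u_t$. This would force $m=t$, which is excluded by $t\le m-1$. So at most one arc is empty, giving exactly one edge $w_1\to u_t$ (or a loop when $t=1$), and the other arc is nonempty. Hence no multi-edges arise and every admissible $k_1$ is realized. This is the step I expect to need care, in the same spirit as Lemma~\ref{t_omez}.

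Counting then gives $|\mathcal{B}^{\,t}|=\lfloor (m+t)/2\rfloor-t+1=\lfloor (m-t)/2\rfloor+1$. This requires that distinct $k_1$ give non-isomorphic digraphs, which follows from the remark that the pair $(k_1,k_2)$ with $k_1\le k_2$ is an invariant: $k_1,k_2$ are the lengths of the two first-return paths at the unique vertex of out-degree $2$. Finally, the union statement follows from Lemma~\ref{lem:m+1_is_butterfly}, which gives $\mathcal{SC}_{m+1}(m)\subseteq\bigcup_t\mathcal{B}^{\,t}$, combined with Lemma~\ref{t_omez}, which gives $t\le m-1$. The reverse inclusion holds by definition.
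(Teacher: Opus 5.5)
Your proposal is correct and follows the paper's route: the same arithmetic from $k_1+k_2-t=m$, $k_1\le k_2$ and $t\le k_1$, and the union via Lemmas~\ref{lem:m+1_is_butterfly} and~\ref{t_omez}. It also verifies two things the paper leaves implicit, namely that every $k_1\in\{t,\dots,\lfloor (m+t)/2\rfloor\}$ is actually realized without multiple edges, and that distinct pairs give non-isomorphic digraphs. For the count $|\mathcal{B}^{\,t}|=|K_1^t|$ you also need the converse, that $(t,k_1,k_2)$ determines the digraph up to isomorphism: the two cycles through the out-degree-$2$ vertex can only share a final segment ending at that vertex, since all other out-degrees are $1$. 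The paper likewise takes this for granted through its notation.
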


\begin{proof}
The relation $k_1\le k_2$ together with \eqref{vztah_k_1_k_2} yields
\[ 
    2k_1\le m+t,
 \]
and since $k_1$ is a natural number, we obtain
\[ 
    k_1\le\left\lfloor\frac{m+t}{2}\right\rfloor.
 \]
Moreover, the condition $1\le t\le \min\{k_1,k_2\}$ implies $t\le k_1$.
Therefore,
\[ 
    k_1\in K_1^t=\left\{t,t+1,\dots,\left\lfloor\frac{m+t}{2}\right\rfloor-1, \left\lfloor\frac{m+t}{2}\right\rfloor\right\},
 \]
and for each $k_1\in K_1^t$ there exists a unique $k_2$ satisfying \eqref{vztah_k_1_k_2}.
Consequently,
\[ 
    k_2\in K_2^t=\left\{m+t-k_1\,|\,k_1\in K_1^t\right\}
    =\left\{m,m-1,\dots,\left\lceil\frac{m+t}{2}\right\rceil+1,\left\lceil\frac{m+t}{2}\right\rceil\right\},
 \]
and hence
\[ 
    \left|\mathcal{B}^{\,t}\right|
    =\left|K_2^t\right|
    =\left|K_1^t\right|
    =\left\lfloor\frac{m+t}{2}\right\rfloor-t+1
    =\left\lfloor\frac{m-t}{2}\right\rfloor+1.
 \]

To prove the relation
\[
    \mathcal{SC}_{m+1}(m)=\bigcup_{t=1}^{m-1}\mathcal{B}^{\,t},
\]
it suffices to determine an upper bound for the parameter $t$, which is determined in Lemma~\ref{t_omez}.

\end{proof}

\begin{corollary}
Let $m\ge 2$, and let $\mathcal{SC}_{m+1}(m)$ be defined as above. Then
\[
\left|\mathcal{SC}_{m+1}(m)\right|
=\left\lfloor\frac{(m-1)^2}{4}\right\rfloor + (m-1).
\]
\end{corollary}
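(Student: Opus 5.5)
The count follows by summing the class sizes from Lemma~\ref{mnoziny_k_1k_2}. That lemma gives the disjoint decomposition $\mathcal{SC}_{m+1}(m)=\bigcup_{t=1}^{m-1}\mathcal{B}^{\,t}$; disjointness was noted before the lemma, since each ordered pair $(k_1,k_2)$ with $k_1\le k_2$ determines $t$ through \eqref{vztah_k_1_k_2}. It also gives $|\mathcal{B}^{\,t}|=\lfloor (m-t)/2\rfloor+1$. Hence
\[
\left|\mathcal{SC}_{m+1}(m)\right|=\sum_{t=1}^{m-1}\left(\left\lfloor\frac{m-t}{2}\right\rfloor+1\right)=(m-1)+\sum_{s=1}^{m-1}\left\lfloor\frac{s}{2}\right\rfloor,
\]
where we substitute $s=m-t$, which runs over $1,\dots,m-1$. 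The whole task then reduces to the identity
\[
\sum_{s=1}^{n}\left\lfloor \frac{s}{2}\right\rfloor=\left\lfloor\frac{n^2}{4}\right\rfloor, \qquad n=m-1 .
\]

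I would prove this identity by a parity split, or equivalently by induction on $n$. If $n=2q$, the terms pair as $(0,1),(1,2),\dots,(q-1,q)$ over $s=2j-1,2j$, so the sum is $\sum_{j=1}^{q}(2j-1)=q^2=n^2/4$. If $n=2q+1$, we add the extra term $\lfloor(2q+1)/2\rfloor=q$ to get $q^2+q=\lfloor (4q^2+4q+1)/4\rfloor=\lfloor n^2/4\rfloor$. For the inductive alternative, it suffices to check $\lfloor n^2/4\rfloor-\lfloor (n-1)^2/4\rfloor=\lfloor n/2\rfloor$, which again follows from a two-case parity check.

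Finally, I would sanity-check small cases. For $m=2$ the formula gives $0+1=1$, matching $\mathcal{SC}_3(2)=\{\mathcal{B}^{\,1}_{1,2}\}$ as used in Lemma~\ref{srovnani_m+k_hran_motyl}. For $m=3$ it gives $1+2=3$.

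No step here is a real obstacle. The only point needing care is that the count is exactly the sum of the $|\mathcal{B}^{\,t}|$: this requires that distinct admissible pairs $(k_1,k_2)$ give non-isomorphic digraphs, and that every admissible pair is realized. Both facts are part of the setup preceding Lemma~\ref{mnoziny_k_1k_2}, together with the remark that $(k_1,k_2)$ with $k_1\le k_2$ determines the digraph for fixed $m$, so I would cite these rather than reprove them.
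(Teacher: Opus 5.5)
Your proposal is correct and follows the paper's proof essentially line for line. Like the paper, you sum $|\mathcal{B}^{\,t}|=\lfloor (m-t)/2\rfloor+1$ over $t=1,\dots,m-1$ using the decomposition from Lemma~\ref{mnoziny_k_1k_2}, rewrite the sum as $(m-1)+\sum_{s=1}^{m-1}\lfloor s/2\rfloor$, and evaluate it by the same even/odd pairing argument. You also state explicitly that the count requires distinct admissible pairs $(k_1,k_2)$ to give non-isomorphic digraphs, each of which is realized. The paper leaves this point implicit, but it does hold, so your added care is appropriate.
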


\begin{proof}
By Lemma~\ref{mnoziny_k_1k_2} we have
\[
\mathcal{SC}_{m+1}(m)=\bigcup_{t=1}^{m-1}\mathcal{B}^{\,t}
\qquad\text{and}\qquad
\left|\mathcal{B}^{\,t}\right|=\left\lfloor\frac{m-t}{2}\right\rfloor+1.
\]
Hence,
\begin{align*}
\left|\mathcal{SC}_{m+1}(m)\right|
&=\sum_{t=1}^{m-1}\left(\left\lfloor\frac{m-t}{2}\right\rfloor+1\right) \\
&=\sum_{t=1}^{m-1}\left\lfloor\frac{t}{2}\right\rfloor + (m-1).
\end{align*}

\medskip
\noindent\textbf{Case 1.} Suppose $m-1=2n$ for some $n\in\mathbb{N}$. Pair the terms $(2k-1,2k)$ for $k=1,\dots,n$ to obtain
\[
\sum_{t=1}^{2n}\left\lfloor\frac{t}{2}\right\rfloor
=\sum_{k=1}^{n}\left(\left\lfloor\frac{2k-1}{2}\right\rfloor+\left\lfloor\frac{2k}{2}\right\rfloor\right)
=\sum_{k=1}^{n}(2k-1)
=n^2.
\]
Therefore,
\[
\sum_{t=1}^{m-1}\left\lfloor\frac{t}{2}\right\rfloor
=\frac{(m-1)^2}{4}=\left\lfloor\frac{(m-1)^2}{4}\right\rfloor,
\]
since $m-1$ is even.

\medskip
\noindent\textbf{Case 2.} Suppose $m-1=2n-1$ for some $n\in\mathbb{N}$. Then
\[
\sum_{t=1}^{2n-1}\left\lfloor\frac{t}{2}\right\rfloor
=\sum_{t=1}^{2n}\left\lfloor\frac{t}{2}\right\rfloor-\left\lfloor\frac{2n}{2}\right\rfloor
=n^2-n.
\]
On the other hand,
\[
\left\lfloor\frac{(m-1)^2}{4}\right\rfloor
=\left\lfloor\frac{(2n-1)^2}{4}\right\rfloor
=\left\lfloor n^2-n+\frac14\right\rfloor
=n^2-n,
\]
so in this case as well
\[
\sum_{t=1}^{m-1}\left\lfloor\frac{t}{2}\right\rfloor
=\left\lfloor\frac{(m-1)^2}{4}\right\rfloor.
\]

Combining the two cases yields
\[
\left|\mathcal{SC}_{m+1}(m)\right|
=\left\lfloor\frac{(m-1)^2}{4}\right\rfloor + (m-1),
\]
as claimed.
\end{proof}

\begin{lemma}\label{polynomy_koren}
For every $n_1,n_2\in\mathds{N}$ with $n_1\leq n_2$ and for every $z\in(0,1)$ it holds, that
\[ 
    1-z^{n_1}-z^{n_2}>1-z^{n_1-1}-z^{n_2+1}.
 \]
\end{lemma}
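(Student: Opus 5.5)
The inequality is equivalent to
\[
z^{n_1-1}+z^{n_2+1}-z^{n_1}-z^{n_2}>0,
\]
so the plan is to move everything to one side and factor. Grouping the terms in pairs gives
\[
z^{n_1-1}(1-z)-z^{n_2}(1-z)=(1-z)\bigl(z^{n_1-1}-z^{n_2}\bigr).
\]
It therefore suffices to show that both factors are strictly positive for $z\in(0,1)$.

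The first factor $1-z$ is clearly positive on $(0,1)$. For the second factor, the hypothesis $n_1\le n_2$ gives $n_1-1<n_2$, i.e.\ the exponent $n_1-1$ is strictly smaller than $n_2$. Since $t\mapsto z^t$ is strictly decreasing in the exponent for fixed $z\in(0,1)$, this yields $z^{n_1-1}>z^{n_2}$. Hence the product is strictly positive, which is exactly the claimed strict inequality.

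The only point requiring care is the edge case $n_1=1$, where $z^{n_1-1}=z^0=1$. Then the right-hand side reads $1-1-z^{n_2+1}$, and the argument above still goes through unchanged because it only uses $0=n_1-1<n_2$. I note that it is precisely the strictness of $n_1-1<n_2$, guaranteed even when $n_1=n_2$, that makes the inequality strict. I do not expect any genuine obstacle; the whole proof is the factorization $(1-z)(z^{n_1-1}-z^{n_2})$.
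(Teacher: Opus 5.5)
Your proof is correct and matches the paper's: the paper starts from $z^{n_1-1}>z^{n_2}$, multiplies by $1-z>0$, and rearranges. That is exactly your factorization $(1-z)\bigl(z^{n_1-1}-z^{n_2}\bigr)$, read in the opposite direction.
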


\begin{proof}
Let $n_1,n_2\in\mathds{N}$ with $n_1\leq n_2$ and let $z\in(0,1)$. Then
\begin{align*}
    z^{n_1-1}&>z^{n_2}, \\
    z^{n_1-1}(1-z)&>z^{n_2}(1-z), \\
    z^{n_1-1} - z^{n_1}&>z^{n_2}-z^{n_2+1}, \\
    - z^{n_1}- z^{n_2}&>-z^{n_1-1}-z^{n_2+1}, \\
    1-z^{n_1}- z^{n_2}&>1-z^{n_1-1}-z^{n_2+1}.
\end{align*}
\end{proof}

\begin{definition}
    Let $\mathcal{SC}^{h>0}(m)$ be defined as above and $G_1,G_2\in\mathcal{SC}^{h>0}(m)$. We define a binary relation $<_h$ on $\mathcal{SC}^{h>0}(m)$ by
\[
G_1 <_h G_2 \quad:\Longleftrightarrow\quad h(G_1) < h(G_2).
\]
\end{definition}

\begin{lemma}\label{horizont}
Let $m\in \mathds{N}$, $m\ge 2$. Then for every $t\in\{1,2,\dots,m-1\}$ we have
\[ 
    \left(\left\lfloor\frac{m+t}{2}\right\rfloor, \left\lceil\frac{m+t}{2}\right\rceil\right) <_h
    \left(\left\lfloor\frac{m+t}{2}\right\rfloor-1, \left\lceil\frac{m+t}{2}\right\rceil+1\right)
    <_h\dots <_h(t+1,m-1)<_h(t,m).
 \]
In other words, for each $t$ the class $\mathcal{B}^{\,t}$ is totally ordered by the relation $<_h$.
\end{lemma}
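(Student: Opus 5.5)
Fix $t$. By Lemma~\ref{mnoziny_k_1k_2}, the elements of $\mathcal{B}^{\,t}$ are exactly the pairs $(k_1,k_2)=(k_1,m+t-k_1)$ with $k_1\in K_1^t=\{t,\dots,\lfloor\frac{m+t}{2}\rfloor\}$. Hence they form one chain: going from $(k_1,k_2)$ to its neighbour $(k_1-1,k_2+1)$ decreases the first coordinate and increases the second by one. The chain starts at $(\lfloor\frac{m+t}{2}\rfloor,\lceil\frac{m+t}{2}\rceil)$ and ends at $(t,m)$. It therefore suffices to prove the strict inequality for each consecutive pair:
\[
(k_1,k_2)<_h(k_1-1,k_2+1)\qquad\text{whenever } t+1\le k_1\le k_2 .
\]
Transitivity of $<$ on the reals then gives the displayed chain and the total ordering of $\mathcal{B}^{\,t}$.

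\textbf{Consecutive step.} By Corollary~\ref{cor:butterfly_t_irrelevant}, $h(\mathcal{B}^{\,s}_{a,b})=-\ln R(a,b)$, where $R(a,b)\in(0,1)$ is the unique root of $f_{a,b}(z)=1-z^{a}-z^{b}$. It is enough to show $R(k_1-1,k_2+1)<R(k_1,k_2)$. Put $R:=R(k_1,k_2)$, so $f_{k_1,k_2}(R)=0$. Lemma~\ref{polynomy_koren} applies with $n_1=k_1\le k_2=n_2$; note $k_1-1\ge t\ge1$, so all exponents are natural numbers. It gives
\[
f_{k_1-1,k_2+1}(R)<f_{k_1,k_2}(R)=0 .
\]

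\textbf{Locating the root.} The polynomial $f_{k_1-1,k_2+1}$ is strictly decreasing on $(0,1)$ and equals $1$ at $z=0$. Since it is negative at $R$, its unique root $R(k_1-1,k_2+1)$ lies in $(0,R)$. Hence
\[
R(k_1-1,k_2+1)<R(k_1,k_2)\quad\text{and so}\quad h(k_1,k_2)<h(k_1-1,k_2+1).
\]

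\textbf{Remaining checks.} I do not expect a real obstacle; the main care is bookkeeping. First, every intermediate pair must lie in $\mathcal{B}^{\,t}$, i.e. have $k_1\ge t$ and $k_1\le k_2$, which holds by the description of $K_1^t$. Second, Lemma~\ref{polynomy_koren} needs $n_1\le n_2$, which is guaranteed at every step since we only move away from the balanced pair. Third, if $|\mathcal{B}^{\,t}|=1$ (i.e. $\lfloor\frac{m-t}{2}\rfloor=0$), the chain is a single element and the statement is trivial.
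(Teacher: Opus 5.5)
Your proof is correct and follows essentially the same route as the paper: both enumerate $\mathcal{B}^{\,t}$ via Lemma~\ref{mnoziny_k_1k_2} and compare consecutive pairs using Lemma~\ref{polynomy_koren}. Your extra step fills in something the paper leaves implicit. You evaluate $1-z^{k_1-1}-z^{k_2+1}$ at the root $R(k_1,k_2)$, check $k_1-1\ge 1$, and use strict monotonicity on $(0,1)$; this is exactly the passage from the pointwise inequality to $z_j>z_{j+1}$.
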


\begin{proof}
Let $t\in\{1,2,\dots,m-1\}$. For any $\mathcal{B}^{\,t}_{k_1,k_2}\in \mathcal{B}^{\,t}$ there exists a topological polynomial
\[ 
    1-z^{k_1}-z^{k_2},
 \]
whose positive root $z_0(k_1,k_2)$ determines the topological entropy (that is,
$h\!\left(\mathcal{B}^{\,t}_{k_1,k_2}\right)=-\ln(z_0)$). Moreover, by Lemma~\ref{mnoziny_k_1k_2},
\begin{align*}
    k_1&\in\left\{t,t+1,\dots,\left\lfloor\frac{m+t}{2}\right\rfloor-1, \left\lfloor\frac{m+t}{2}\right\rfloor\right\}, \\
    k_2&\in\left\{m,m-1,\dots,\left\lceil\frac{m+t}{2}\right\rceil+1,\left\lceil\frac{m+t}{2}\right\rceil\right\}.
\end{align*}

All digraphs in this class are
\[ 
    \left(\left\lfloor\frac{m+t}{2}\right\rfloor, \left\lceil\frac{m+t}{2}\right\rceil\right),
    \left(\left\lfloor\frac{m+t}{2}\right\rfloor-1, \left\lceil\frac{m+t}{2}\right\rceil+1\right),
    \dots,(t+1,m-1),(t,m),
 \]
and their topological polynomials are
\begin{align*}
    T_1&=1-z^{\left\lfloor\frac{m+t}{2}\right\rfloor}-z^{\left\lceil\frac{m+t}{2}\right\rceil}, \\
    T_2&=1-z^{\left\lfloor\frac{m+t}{2}\right\rfloor-1}-z^{\left\lceil\frac{m+t}{2}\right\rceil+1}, \\
    &\vdots \\
    T_{\left\lfloor\frac{m-t}{2}\right\rfloor}&=1-z^{t+1}-z^{m-1}, \\
    T_{\left\lfloor\frac{m-t}{2}\right\rfloor+1}&=1-z^t-z^m.       
\end{align*}
Let $z_i\in(0,1)$ denote the positive root of $T_i$ for
$i\in \left\{1,2,\dots,\left\lfloor\frac{m-t}{2}\right\rfloor+1\right\}$. Then, by
Lemma~\ref{polynomy_koren}, we have $z_j>z_{j+1}$ for every
$j\in \left\{1,2,\dots,\left\lfloor\frac{m-t}{2}\right\rfloor\right\}$. Consequently, the class
$\mathcal{B}^{\,t}$ is totally ordered by the relation $<_h$, and
\[ 
    \left(\left\lfloor\frac{m+t}{2}\right\rfloor, \left\lceil\frac{m+t}{2}\right\rceil\right) <_h
    \left(\left\lfloor\frac{m+t}{2}\right\rfloor-1, \left\lceil\frac{m+t}{2}\right\rceil+1\right)
    <_h\dots <_h(t+1,m-1)<_h(t,m).
 \]
\end{proof}

\begin{lemma}\label{lem:poly_positive_at_z0}
Let $m \in \mathbb{N}$.
\begin{enumerate}
    \item If $m \geq 5$, then $(m-3, m) <_h (m-2, m-2)$.
    \item If $m \geq 6$, then $(m-4, m) <_h (m-3, m-2)$.
    \item If $m \geq 7$, then $(m-4, m-1) <_h (m-3, m-3)$.
    \item If $m = 7$, then $(m-3, m-3) <_h (m-5, m)$, whereas for all $m \geq 8$, $(m-5, m) <_h (m-3, m-3)$.
    \item If $m = 7$, then $(2, m) <_h (3, m-2)$, whereas for all $m \geq 8$, $(3, m-2) <_h (2, m)$. 
\end{enumerate}
\end{lemma}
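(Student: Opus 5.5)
The plan is to reduce each comparison to evaluating one polynomial at the root of the other. By Corollary~\ref{polynom_m+1}, $(a,b)<_h(c,d)$ is equivalent to $R(a,b)>R(c,d)$, where $R(\cdot,\cdot)$ is the unique root in $(0,1)$ of $1-z^{a}-z^{b}$. Since $T_{a,b}(z)=1-z^a-z^b$ is strictly decreasing on $(0,1)$, positive before its root and negative after, the criterion is
\[
(a,b)<_h(c,d)\iff T_{a,b}\bigl(R(c,d)\bigr)>0 .
\]
When the right-hand type is $(k,k)$, the root is explicit: $R(k,k)=2^{-1/k}$. So for items 1, 3 and 4 I would evaluate $T_{a,b}$ at $z_0=2^{-1/k}$, using $z_0^k=\tfrac12$. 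For example, item 1 with $k=m-2$ becomes $1-z_0^{m-3}-z_0^{m}>0$, i.e. $2^{1/(m-2)}+2^{-2/(m-2)}<2$, after dividing by $z_0^{m-2}$.

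Setting $x=2^{1/k}>1$, each of these conditions becomes a one-variable inequality of the form $x^{j}+x^{-i}<2$ (or $>2$) with fixed small integers $i,j$:
\begin{itemize}[label={}]
\item item 1: $x+x^{-2}<2$;
\item item 3, with $k=m-3$: $x+x^{-3}<2$;
\item item 4, with $k=m-3$: $x^{2}+x^{-3}$ compared with $2$.
\end{itemize}
Since $x=2^{1/k}\downarrow 1$ as $k$ grows, I would write $x=e^{s}$ with $s=\ln2/k$. Then $x^{j}+x^{-i}-2 = (j-i)s + \tfrac{j^2+i^2}{2}s^2+\dots$, so the sign for large $k$ is governed by $j-i$. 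Inequalities with $j<i$ hold for all small $s$, and one checks the threshold by monotonicity: the function $s\mapsto e^{js}+e^{-is}$ is convex and equals $2$ at $s=0$, so it lies below $2$ exactly on an interval $(0,s^*)$. It then suffices to verify the boundary value of $m$ numerically, which handles items 1 and 3.

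Item 4 has $j=2<3=i$ as well, so it holds for large $m$. The statement says it fails at $m=7$ ($k=4$, $x=2^{1/4}$): $\sqrt2+2^{-3/4}\approx1.414+0.595>2$. At $m=8$ ($k=5$): $2^{2/5}+2^{-3/5}\approx1.320+0.660<2$. Convexity then gives the claim for all $m\ge 8$.

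Items 2 and 5 have no equal-exponent pair, so I cannot evaluate at an explicit root. For item 2 I would first bound $R(m-3,m-2)$ via Lemma~\ref{monotonicity}. Then, for each side, I would work with $y=z^{\,m-4}$ near the crossing, or more robustly use $u=-\ln z\approx h$ and expand both topological equations asymptotically. The root of $1-z^{a}-z^{a+d}$ satisfies $z^{a}=1/(1+z^{d})$, so $h\,a=\ln(1+e^{-dh})$. Comparing $a h=\ln(1+e^{-dh})$ across the two types (with $a$ of order $m$) shows which $h$ is smaller for large $m$. Concretely, I would compare $T_{m-4,m}$ evaluated at $R(m-3,m-2)$, substituting $z^{m-3}=1-z^{m-2}$, which reduces the question to the sign of a fixed expression in $z$ and $w=z^{m-2}$ on the curve $w=(1-w)z$.

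Item 5 is the same procedure with $(2,m)$ versus $(3,m-2)$. Here the entropies stay bounded away from $0$, since $R(2,m)\to(\sqrt5-1)/2$ and $R(3,m-2)\to$ the root of $1-z^3$, namely $1$. Wait: $1-z^3$ has root $1$, so $h(3,m-2)\to0$ while $h(2,m)\to\ln\varphi$. This immediately gives $(3,m-2)<_h(2,m)$ for large $m$. For the finite range I would evaluate $T_{2,m}$ at the root of $T_{3,m-2}$ directly: $m=7$ is $(2,7)$ vs $(3,5)$, and $m=8$ is $(2,8)$ vs $(3,6)$. For all $m\ge 8$ I would use monotonicity: $R(3,m-2)$ increases in $m$ by Lemma~\ref{monotonicity}, and $R(2,m)<\varphi^{-1}\cdot$const. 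Hence it suffices that $R(3,6)>R(2,\infty)=(\sqrt5-1)/2\approx0.618$, which holds since $1-0.618^3-0.618^6>0$.

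The main obstacle is item 2 and the exact threshold cases. There the uniform-in-$m$ argument needs a clean monotone reduction, either convexity in $s$ or a substitution turning the comparison into a single-variable inequality. Only then can the finitely many small cases be settled by direct numerical evaluation of the relevant polynomial at the explicitly bounded root.
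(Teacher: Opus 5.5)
Your criterion $(a,b)<_h(c,d)\iff T_{a,b}(R(c,d))>0$ is the same reduction the paper uses. For items 1, 3 and 4, your convexity argument in $s=\ln 2/k$ is a valid alternative to the paper's factorizations $(1-z_0)(z_0^2+z_0-1)$ and $(1-z_0)(z_0^4+z_0^3+z_0^2-z_0-1)$. There is one small slip: item 3 reduces to $x+x^{-2}<2$, not $x+x^{-3}<2$, because $z_0^{m-1}/z_0^{m-3}=z_0^{2}$. The check at $x=2^{1/4}$ still succeeds.

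\textbf{Item 5 for $m\ge 8$ has a genuine gap.} The claim $R(2,m)\to(\sqrt5-1)/2$ is false; that number is the root of $1-z-z^2$, i.e.\ $R(1,2)$. By Lemma~\ref{monotonicity}, $R(2,m)$ increases in $m$ and satisfies $R(2,m)\ge R(2,2)=2^{-1/2}>0.618$. It also tends to $1$, since $1-z^2-z^m\to 1-z^2>0$ for every fixed $z<1$. Consequences:
\begin{enumerate}
\item Both entropies tend to $0$, so the large-$m$ conclusion does not follow.
\item The bound $R(2,m)<0.618$ fails for every $m$, so verifying $R(3,6)>0.618$ proves nothing.
\item The comparison is in fact very tight ($R(2,8)\approx 0.8512$ versus $R(3,6)\approx 0.8518$), so comparing each root with a fixed constant cannot work.
\end{enumerate}
What you need is an $m$-independent reduction. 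At $z_0=R(3,m-2)$, use $z_0^{m-2}=1-z_0^3$ to get
\[
T_{2,m}(z_0)=1-2z_0^2+z_0^5=(1-z_0)\,g(z_0),\qquad g(z)=1+z-z^2-z^3-z^4 .
\]
Here $g$ is strictly decreasing on $[1/2,1)$ and $z_0=z_0(m)$ increases with $m$. At $m=7$ one has $g(z_0)=z_0(1-z_0)(1-z_0^3)>0$. At $m=8$, with $z_0(8)=\bigl((\sqrt5-1)/2\bigr)^{1/3}$, one has $g(z_0(8))<0$. These two facts settle the case $m=7$ and all $m\ge 8$; this is the paper's proof.

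\textbf{Item 2 is left unfinished.} You set up the substitution $w=z^{m-2}$ on the curve $w=(1-w)z$ but never determine the sign. Your asymptotic comparison via $ah=\ln(1+e^{-dh})$ would only cover large $m$. Carrying the substitution through gives
\[
T_{m-4,m}(z_0)=z_0^{m-4}(1-z_0)(z_0^3+z_0^2-1),\qquad z_0=R(m-3,m-2).
\]
The last factor is positive without any numerics: for $m\ge 6$,
\[
1=z_0^{m-3}+z_0^{m-2}\le z_0^3+z_0^4<z_0^3+z_0^2 .
\]
This is how the paper concludes. With these two repairs your plan becomes a complete proof.
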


\begin{proof}
The proof of these relations relies on the evaluation of the topological polynomial of one family at the unique positive root of the topological equation of another. The signs of the resulting expressions are determined through algebraic factorization and monotonicity analysis. Due to the technical nature of these calculations, the detailed analytical derivation is provided in Appendix.
\end{proof}

\begin{theorem}[Pyramidal entropy diagram for $\mathcal{SC}_{m+1}(m)$]\label{thm:pyramidal_diagram}
Let $m\ge 2$. For each $t\in\{1,2,\dots,m-1\}$ let $\mathcal{B}^{\,t}\subset \mathcal{SC}_{m+1}(m)$
be defined as above, and represent each $\mathcal{B}^{\,t}_{k_1,k_2}\in\mathcal{B}^{\,t}$ by the
ordered pair $(k_1,k_2)$ with $k_1\le k_2$.

Define the \emph{pyramidal diagram} $\mathcal{H}_m$ as the directed graph whose vertex set is
\[
V(\mathcal{H}_m)=\bigcup_{t=1}^{m-1}\mathcal{B}^{\,t}
\quad(\text{identified with their pairs }(k_1,k_2)),
\]
and whose edges are of the following three types:

\begin{enumerate}
\item[\textup{(H)}] (\emph{Horizontal edges within a fixed $t$.})
For each fixed $t$, connect consecutive elements in the chain from Lemma~\ref{horizont}:
\[
\Bigl(\Bigl\lfloor\frac{m+t}{2}\Bigr\rfloor,\Bigl\lceil\frac{m+t}{2}\Bigr\rceil\Bigr)
\to
\Bigl(\Bigl\lfloor\frac{m+t}{2}\Bigr\rfloor-1,\Bigl\lceil\frac{m+t}{2}\Bigr\rceil+1\Bigr)
\to\cdots\to (t+1,m-1)\to (t,m).
\]

\item[\textup{(V)}] (\emph{Vertical/diagonal edges between $t$ and $t-1$.})
Let $t\in\{2,3,\dots,m-1\}$ and let $(k_1,k_2)\in\mathcal{B}^{\,t}$.
Whenever the resulting pairs are admissible (and written in nondecreasing order),
add edges
\[
(k_1,k_2)\to (k_1,k_2-1)
\qquad\text{and}\qquad
(k_1,k_2)\to (k_1-1,k_2).
\]
\item[\textup{($\text{V}^*$)}]
Edges
\[ 
    (m-3,m)\to (m-2,m-2), \, (m-4,m)\to (m-3,m-2), \,(m-4,m-1)\to(m-3,m-3).
 \]
\end{enumerate}

Then every edge of $\mathcal{H}_m$ is strict with respect to entropy:
if $G_1\to G_2$ is an edge in $\mathcal{H}_m$, then
\[
h(G_1)<h(G_2).
\]
Consequently, the reachability relation $\preceq$ induced by $\mathcal{H}_m$,
defined by $G_1 \preceq G_2$ if and only if there exists a directed path from $G_1$ to $G_2$ in $\mathcal{H}_m$,
is a partial order on $\mathcal{SC}_{m+1}(m)$ and it is compatible with $<_{h}$.
Moreover, the Hasse diagram of $(\mathcal{SC}_{m+1}(m),\preceq)$ is obtained as the transitive reduction of $\mathcal{H}_m$. In particular, $\mathcal{H}_m$ has the pyramidal shape shown in Figures~\ref{fig:pyramid-common}, \ref{fig:pyramid-odd-ending} and \ref{fig:pyramid-even-ending}(odd/even $m$).
\end{theorem}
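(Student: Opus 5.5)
The plan is to check strictness for each of the three edge types separately, and then deduce the order-theoretic consequences from strictness alone.

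\emph{Type (H).} Consecutive elements of the chain in $\mathcal{B}^{\,t}$ have the form $(k_1,k_2)\to(k_1-1,k_2+1)$ with $k_1\le k_2$. By Lemma~\ref{polynomy_koren}, $1-z^{k_1}-z^{k_2}>1-z^{k_1-1}-z^{k_2+1}$ on $(0,1)$. Both polynomials are strictly decreasing on $(0,1)$, so the root of the second lies strictly to the left of the root of the first. Corollary~\ref{cor:butterfly_t_irrelevant} then gives strictly larger entropy; this is exactly Lemma~\ref{horizont}.

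\emph{Type (V).} Here we pass from $(k_1,k_2)$ to a pair in which one exponent drops by one and the other is unchanged, after reordering to nondecreasing form. For $z\in(0,1)$ we have $z^{k-1}>z^{k}$, so $1-z^{k_1}-z^{k_2-1}<1-z^{k_1}-z^{k_2}$ pointwise, and similarly when $k_1$ is decreased. Equivalently, Lemma~\ref{monotonicity} applies with strict inequality, since the pairs differ. Hence the root strictly decreases and the entropy strictly increases. I would also check that the target pair lies in $\mathcal{B}^{\,t-1}$: we have $(k_1-1)+k_2-(t-1)=m$, so the relation \eqref{vztah_k_1_k_2} is preserved. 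The phrase ``whenever admissible'' takes care of the constraints $t-1\ge1$ and $t-1\le\min$.

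\emph{Type ($\mathrm{V}^*$).} The three edges are precisely items (1)--(3) of Lemma~\ref{lem:poly_positive_at_z0}, which are stated for $m\ge5,6,7$ respectively. For smaller $m$, I would observe that the corresponding pairs are not admissible (a first coordinate would be $\le0$, or $t$ would fall outside $\{1,\dots,m-1\}$), so these edges simply are not present. This is the step that actually carries content: a cross comparison between non-nested exponent pairs, which cannot be handled by pointwise polynomial comparison and is deferred to the Appendix computations, namely evaluating one polynomial at the other's root.

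\emph{Order-theoretic consequences.} Since every edge strictly increases $h$, any directed path from $G_1$ to $G_2$ of positive length gives $h(G_1)<h(G_2)$. Therefore $\mathcal{H}_m$ is acyclic. Reachability, including the trivial path, is then reflexive and transitive, and antisymmetric by acyclicity, hence a partial order. Compatibility with $<_h$ is immediate: $G_1\preceq G_2$ with $G_1\ne G_2$ implies $G_1<_h G_2$. For a finite DAG, the Hasse diagram of its reachability order is the transitive reduction, which is unique. The pyramidal shape is a description of the layers $\mathcal{B}^{\,t}$, whose sizes $\lfloor (m-t)/2\rfloor+1$ come from Lemma~\ref{mnoziny_k_1k_2}, and is not a further claim to prove. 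The only nontrivial obstacle is therefore type ($\mathrm{V}^*$), which I take from Lemma~\ref{lem:poly_positive_at_z0}.
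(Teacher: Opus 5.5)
Your proposal is correct and follows the paper's proof. Type (H) comes from Lemma~\ref{polynomy_koren}, type (V) from lowering a single exponent (Lemma~\ref{monotonicity}), and type (V$^*$) from items (1)--(3) of Lemma~\ref{lem:poly_positive_at_z0}; the partial order and the Hasse diagram then follow from acyclicity. You also check explicitly that the (V$^*$) edges are absent below the thresholds $m\ge5,6,7$, since the target pair would then have $t=0$, which the paper leaves implicit.
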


\begin{proof}
For edges of type \textup{(H)}, the entropy-defining polynomials are of the form
$1-z^{k_1}-z^{k_2}$, and the strict inequality along each horizontal step
follows from Lemma~\ref{polynomy_koren}, hence
$h$ strictly increases along the horizontal arrows.
For edges of type \textup{(V)}, we decrease exactly one exponent by $1$, so the corresponding
polynomials satisfy the monotonicity assumption of Lemma~\ref{monotonicity}. Therefore the unique positive
root in $(0,1)$ strictly decreases, and thus the entropy strictly increases. Edges $(\text{V}^*)$ follow from Lemma~\ref{lem:poly_positive_at_z0}.
This proves $h(G_1)<h(G_2)$ for every arrow $G_1\to G_2$ in $\mathcal{H}_m$. 

Since every edge $G_1\to G_2$ satisfies $h(G_1)<h(G_2)$, the digraph $\mathcal{H}_m$ is acyclic.
Therefore the reachability relation $\preceq$ is antisymmetric and hence defines a
partial order on $V(\mathcal{H}_m)=\mathcal{SC}_{m+1}(m)$, and it is compatible with $<_{h}$.
The claim about the Hasse diagram follows by definition: it is the transitive reduction
of the reachability digraph.

\end{proof}

\newcommand{\TikzExternalOff}{\ifcsname tikzexternaldisable\endcsname\tikzexternaldisable\fi}
\newcommand{\TikzExternalOn}{\ifcsname tikzexternalenable\endcsname\tikzexternalenable\fi}

\makeatletter
\newcommand{\SetTLabel}[2]{\expandafter\gdef\csname TLabel@#1\endcsname{#2}}
\newcommand{\TLabel}[1]{%
  \ifcsname TLabel@#1\endcsname
    \csname TLabel@#1\endcsname
  \else
    \text{(doplním)}%
  \fi
}

\newcommand{\SetNodeLabel}[2]{\expandafter\gdef\csname NodeLabel@#1\endcsname{#2}}
\newcommand{\NodeLabel}[1]{%
  \ifcsname NodeLabel@#1\endcsname
    \csname NodeLabel@#1\endcsname
  \else
    #1%
  \fi
}
\makeatother

\newcommand{\PyramidalEntropyTopOnly}[2]{%
\begin{tikzpicture}[
  x=1cm, y=1cm,
  every node/.style={font=\small, inner sep=0pt, outer sep=0pt},
  edge/.style={-{Stealth[length=3.2mm,width=2.2mm]}, line width=0.55pt}
]
  \def\xsep{2.75}
  \def\ysep{1.05}
  \def\labelsep{1.2}

  \pgfmathtruncatemacro{\m}{#1}
  \pgfmathtruncatemacro{\rows}{\m-1}
  \pgfmathtruncatemacro{\toprows}{min(8,\rows)}

  \foreach \r in {1,...,\toprows} {%
    \pgfmathtruncatemacro{\cols}{int(ceil((\r+1)/2))}%
    \pgfmathsetmacro{\y}{-(\r-1)*\ysep}%

    \node[anchor=east] at (-\labelsep,\y) {$t=\TLabel{\r}:$};

    \foreach \c in {1,...,\cols} {%
      \pgfmathsetmacro{\x}{(\c-1)*\xsep}%
      \node (N-\r-\c) at (\x,\y) {\NodeLabel{N-\r-\c}};
    }%

    \pgfmathtruncatemacro{\colsMinusOne}{\cols-1}%
    \ifnum\colsMinusOne>0
      \foreach \c in {1,...,\colsMinusOne} {%
        \pgfmathtruncatemacro{\cp}{\c+1}%
        \draw[edge] (N-\r-\c) -- (N-\r-\cp);
      }%
    \fi
  }%

  \pgfmathtruncatemacro{\toprowsMinusOne}{\toprows-1}%
  \ifnum\toprowsMinusOne>0
    \foreach \r in {1,...,\toprowsMinusOne} {%
      \pgfmathtruncatemacro{\rp}{\r+1}%
      \pgfmathtruncatemacro{\colsA}{int(ceil((\r+1)/2))}%
      \pgfmathtruncatemacro{\colsB}{int(ceil((\rp+1)/2))}%
      \pgfmathtruncatemacro{\colsMin}{min(\colsA,\colsB)}%
      \foreach \c in {1,...,\colsMin} {%
        \draw[edge] (N-\r-\c) -- (N-\rp-\c);
      }%
    }%
  \fi

\pgfmathtruncatemacro{\colsLast}{int(ceil((\toprows+1)/2))}%
\pgfmathsetmacro{\yvdots}{-(\toprows-0.2)*\ysep}
\foreach \c in {1,...,\colsLast} {%
  \pgfmathsetmacro{\x}{(\c-1)*\xsep}%
  \node at (\x,\yvdots) {$\vdots$};
}%

  #2
\end{tikzpicture}%
}

\newcommand{\PyramidalEntropyBottomOnly}[2]{%
\begin{tikzpicture}[
  x=1cm, y=1cm,
  every node/.style={font=\small, inner sep=0pt, outer sep=0pt},
  edge/.style={-{Stealth[length=3.2mm,width=2.2mm]}, line width=0.55pt}
]
  \def\xsep{2.75}
  \def\ysep{1.05}
  \def\labelsep{1.2}

  \pgfmathsetmacro{\xstart}{1*\xsep}

  \pgfmathsetmacro{\yvdots}{-0.50*\ysep}
  \pgfmathsetmacro{\yPen}{-1.5*\ysep}
  \pgfmathsetmacro{\yLast}{-2.5*\ysep}

  \foreach \c in {1,...,3} {%
    \pgfmathsetmacro{\x}{\xstart+(\c-1)*\xsep}%
    \node at (\x,\yvdots) {$\vdots$};
  }%

  \node[anchor=east] at (-\labelsep,\yPen)  {$t=\TLabel{P}:$};
  \node[anchor=east] at (-\labelsep,\yLast) {$t=\TLabel{L}:$};

  \node at (\xstart-\xsep,\yPen)  {$\dots$};
  \node at (\xstart-\xsep,\yLast) {$\dots$};

  \foreach \c in {1,...,4} {%
    \pgfmathsetmacro{\x}{\xstart+(\c-1)*\xsep}%
    \node (P-\c) at (\x,\yPen)  {\NodeLabel{P-\c}};
    \node (L-\c) at (\x,\yLast) {\NodeLabel{L-\c}};
  }%

  \foreach \c in {1,2,3} {%
    \pgfmathtruncatemacro{\cp}{\c+1}%
    \draw[edge] (P-\c) -- (P-\cp);
    \draw[edge] (L-\c) -- (L-\cp);
  }%

  \foreach \c in {1,...,4} {%
    \draw[edge] (P-\c) -- (L-\c);
  }%

  #2
\end{tikzpicture}%
}

\begin{figure}[H]
\centering
\TikzExternalOff
{
  \SetTLabel{1}{m-1}
  \SetTLabel{2}{m-2}
  \SetTLabel{3}{m-3}
  \SetTLabel{4}{m-4}
  \SetTLabel{5}{m-5}
  \SetTLabel{6}{m-6}
  \SetTLabel{7}{m-7}
  \SetTLabel{8}{m-8}

  \SetNodeLabel{N-1-1}{$(m-1,m)$}
  \SetNodeLabel{N-2-1}{$(m-1,m-1)$}
  \SetNodeLabel{N-2-2}{$(m-2,m)$}
  \SetNodeLabel{N-3-1}{$(m-2,m-1)$}
  \SetNodeLabel{N-3-2}{$(m-3,m)$}
  \SetNodeLabel{N-4-1}{$(m-2,m-2)$}
  \SetNodeLabel{N-4-2}{$(m-3,m-1)$}
  \SetNodeLabel{N-4-3}{$(m-4,m)$}
  \SetNodeLabel{N-5-1}{$(m-3,m-2)$}
  \SetNodeLabel{N-5-2}{$(m-4,m-1)$}
  \SetNodeLabel{N-5-3}{$(m-5,m)$}
  \SetNodeLabel{N-6-1}{$(m-3,m-3)$}
  \SetNodeLabel{N-6-2}{$(m-4,m-2)$}
  \SetNodeLabel{N-6-3}{$(m-5,m-1)$}
  \SetNodeLabel{N-6-4}{$(m-6,m)$}
  \SetNodeLabel{N-7-1}{$(m-4,m-3)$}
  \SetNodeLabel{N-7-2}{$(m-5,m-2)$}
  \SetNodeLabel{N-7-3}{$(m-6,m-1)$}
  \SetNodeLabel{N-7-4}{$(m-7,m)$}
  \SetNodeLabel{N-8-1}{$(m-4,m-4)$}
  \SetNodeLabel{N-8-2}{$(m-5,m-3)$}
  \SetNodeLabel{N-8-3}{$(m-6,m-2)$}
  \SetNodeLabel{N-8-4}{$(m-7,m-1)$}
  \SetNodeLabel{N-8-5}{$(m-8,m)$}

  \PyramidalEntropyTopOnly{11}{%
    \draw[edge] (N-1-1) -- (N-2-2);
    \draw[edge] (N-2-2) -- (N-3-1);
    \draw[edge] (N-3-1) -- (N-4-2);
    \draw[edge] (N-4-2) -- (N-5-1);
    \draw[edge] (N-5-1) -- (N-6-2);

    \draw[edge] (N-3-2) -- (N-4-3);
    \draw[edge] (N-4-3) -- (N-5-2);
    \draw[edge] (N-5-2) -- (N-6-3);
    \draw[edge] (N-5-3) -- (N-6-4);

    \draw[edge] (N-3-2) -- (N-4-1);
    \draw[edge] (N-4-3) -- (N-5-1);
    \draw[edge] (N-5-2) -- (N-6-1);
    \draw[edge] (N-4-3) -- (N-5-1); 
    \draw[edge] (N-5-2) -- (N-6-1); 
    \draw[edge] (N-6-2) -- (N-7-1); 
    \draw[edge] (N-6-3) -- (N-7-2); 
    \draw[edge] (N-6-4) -- (N-7-3); 
    \draw[edge] (N-7-1) -- (N-8-2); 
    \draw[edge] (N-7-2) -- (N-8-3); 
    \draw[edge] (N-7-3) -- (N-8-4); 
    \draw[edge] (N-7-4) -- (N-8-5);
  }%
}
\TikzExternalOn
\caption{Pyramidal entropy diagram for $\mathcal{SC}_{m+1}(m)$ (common part).}
\label{fig:pyramid-common}
\end{figure}

\begin{figure}[H]
\centering
\TikzExternalOff
{
  \SetTLabel{P}{2}
  \SetTLabel{L}{1}

  \SetNodeLabel{L-4}{$(1,m)$}
  \SetNodeLabel{L-3}{$(2,m-1)$}
  \SetNodeLabel{L-2}{$(3,m-2)$}
  \SetNodeLabel{L-1}{$(4,m-3)$}

  \SetNodeLabel{P-3}{$(2,m)$}
  \SetNodeLabel{P-2}{$(3,m-1)$}
  \SetNodeLabel{P-1}{$(4,m-2)$}
  \SetNodeLabel{P-4}{}

  \PyramidalEntropyBottomOnly{11}{%
    \draw[white, line width=8pt] (P-3) -- (P-4);
    \draw[white, line width=8pt] (P-4) -- (L-4);

    \draw[edge] (P-1) -- (L-2);
    \draw[edge] (P-2) -- (L-3);
    \draw[edge] (P-3) -- (L-4);
  }%
}
\TikzExternalOn
\caption{Pyramidal entropy diagram for $\mathcal{SC}_{m+1}(m)$ (odd $m$, ending).}
\label{fig:pyramid-odd-ending}
\end{figure}

\begin{figure}[H]
\centering
\TikzExternalOff
{
  \SetTLabel{P}{2}
  \SetTLabel{L}{1}

  \SetNodeLabel{L-4}{$(1,m)$}
  \SetNodeLabel{L-3}{$(2,m-1)$}
  \SetNodeLabel{L-2}{$(3,m-2)$}
  \SetNodeLabel{L-1}{$(4,m-3)$}

  \SetNodeLabel{P-4}{$(2,m)$}
  \SetNodeLabel{P-3}{$(3,m-1)$}
  \SetNodeLabel{P-2}{$(4,m-2)$}
  \SetNodeLabel{P-1}{$(5,m-3)$}

  \PyramidalEntropyBottomOnly{12}{%
    \draw[edge] (P-2) -- (L-1);
    \draw[edge] (P-3) -- (L-2);
    \draw[edge] (P-4) -- (L-3);
  }%
}
\TikzExternalOn
\caption{Pyramidal entropy diagram for $\mathcal{SC}_{m+1}(m)$ (even $m$, ending).}
\label{fig:pyramid-even-ending}
\end{figure}

\begin{corollary}\label{7_min}
Let $m\ge 2$. In the class $\mathcal{SC}_{m+1}(m)$, the first ten entropy minima are attained by
\begin{align*}
    &(m-1,m),\ (m-1,m-1),\ (m-2,m),\ (m-2,m-1),\ (m-3,m)\\
    &(m-2,m-2),\ (m-3,m-1),\ (m-4,m),\ (m-3,m-2),\ (m-4,m-1),
\end{align*}
in this order. Moreover, the two entropy maxima in $\mathcal{SC}_{m+1}(m)$ are attained by
\[
(1,m)\quad\text{and}\quad (2,m-1),
\]
where $(1,m)$ is the unique maximizer and $(2,m-1)$ is the second maximizer.
\end{corollary}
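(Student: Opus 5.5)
The plan is to reduce both claims to comparisons among pairs $(k_1,k_2)$, using Corollary~\ref{cor:butterfly_t_irrelevant}: entropy increases exactly when the root of $1-z^{k_1}-z^{k_2}$ decreases. For the ten minima I will exhibit the chain
\[
(m-1,m)\to(m-1,m-1)\to(m-2,m)\to(m-2,m-1)\to(m-3,m)\to(m-2,m-2)\to(m-3,m-1)\to(m-4,m)\to(m-3,m-2)\to(m-4,m-1)
\]
with every step an edge of $\mathcal H_m$ or a composition of edges. The steps are of three kinds.
\begin{itemize}
\item Steps that lower one exponent are (V) edges, strict by Lemma~\ref{monotonicity}.
\item Steps of the form $(a,b)\to(a-1,b+1)$ are (H) edges, strict by Lemma~\ref{polynomy_koren}.
\item The steps $(m-3,m)\to(m-2,m-2)$ and $(m-4,m)\to(m-3,m-2)$ are (V$^*$) edges, strict by Lemma~\ref{lem:poly_positive_at_z0}.
\end{itemize}
This shows the listed elements occur in strictly increasing entropy order.

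Next I must show no other pair lies below $(m-4,m-1)$. Every other admissible pair $(k_1,k_2)$, with $k_1\le k_2\le m$ and $k_1\ge1$, is dominated in one of three ways:
\begin{itemize}
\item it satisfies $k_1\le m-5$, and then $(m-5,m)$ lies below it in the componentwise order;
\item it equals $(m-3,m-3)$;
\item it satisfies $k_1=m-4$ and $k_2\le m-2$, and then $(m-4,m-2)$ is componentwise below it.
\end{itemize}
For the first kind, Lemma~\ref{monotonicity} gives $h(k_1,k_2)\ge h(m-5,m)$. An (H) edge gives $(m-4,m-1)<_h(m-5,m)$. For $(m-3,m-3)$, Lemma~\ref{lem:poly_positive_at_z0}(3) gives the needed inequality. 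For $(m-4,m-2)$, Lemma~\ref{monotonicity} compares it to $(m-4,m-1)$.

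For small $m$ some of the listed pairs are inadmissible, because $k_1\ge t\ge1$ forces $m-4\ge1$ and so on. In that case the statement should be read as saying that the first $|\mathcal{SC}_{m+1}(m)|$ of the list, truncated, give the order, with the same arguments applied where they make sense. I would also check $m\le6$ directly, since Lemma~\ref{lem:poly_positive_at_z0} needs $m\ge5,6,7$. I expect this small-$m$ bookkeeping to be the main nuisance, and I would handle it by direct root comparisons.

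For the maxima, note that $(1,m)$ has $t=1$ and $(2,m-1)$ has $t=1$. An (H) edge gives $(2,m-1)<_h(1,m)$. Any pair with $k_1=1$ must be $(1,m)$, because $t\le k_1=1$ forces $k_2=m$.

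It remains to show every other pair $(k_1,k_2)$ with $k_1\ge2$ has entropy below that of $(2,m-1)$.
\begin{itemize}
\item If $k_1=2$, then $k_2\in\{m-1,m\}$. The pair $(2,m)$ is below $(2,m-1)$ by Lemma~\ref{monotonicity}.
\item If $k_1\ge3$, then $k_2\le m$. Lemma~\ref{monotonicity} gives $h(k_1,k_2)\le h(3,m)$, and an (H) edge gives $(3,m)<_h(2,m+1)$. That pair is not admissible, however, so I will instead compare with $(3,m-1)$ and $(3,m)$ directly. We have $(3,m)\le(3,m-1)$ by Lemma~\ref{monotonicity}, and $(3,m-1)<_h(2,m)\le_h(2,m-1)$ by (H) followed by (V). Any pair with $k_1\ge3$ is componentwise $\ge(3,k_2)\ge(3,m-1)$ or equals $(3,m)$, so in both cases its entropy is at most $h(3,m-1)<h(2,m-1)$.
\end{itemize}
Strictness, and hence uniqueness, comes from the equality clause of Lemma~\ref{monotonicity}.
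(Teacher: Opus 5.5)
Your treatment of the ten minima is correct. It is a more explicit version of the paper's argument, which reads the order off $\mathcal{H}_m$. The chain uses exactly the (H), (V) and (V$^*$) comparisons, and your split of the remaining pairs into three cases is exhaustive: $k_1\le m-5$; $(m-3,m-3)$; and $k_1=m-4$ with $k_2\le m-2$. The word ``below'' is reversed in your componentwise statements: $(k_1,k_2)$ is componentwise below $(m-5,m)$, resp.\ $(m-4,m-2)$. The entropy inequalities you draw from them are nevertheless the right ones. No separate small-$m$ check is needed. The hypotheses $m\ge5,6,7$ of Lemma~\ref{lem:poly_positive_at_z0} coincide exactly with the admissibility of $(m-2,m-2)$, $(m-3,m-2)$ and $(m-3,m-3)$.

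The genuine gap is in the maxima, case $k_1\ge 3$.

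\textbf{What fails.} Your claim that such a pair is componentwise $\ge(3,k_2)\ge(3,m-1)$ requires $k_2\ge m-1$. So it says nothing about pairs with $k_2\le m-2$, for example every pair of row $t=1$ other than $(1,m)$ and $(2,m-1)$. For these pairs the intermediate bound $h(k_1,k_2)\le h(3,m-1)$ is in fact false. Since $(3,m-2)$ is componentwise below $(3,m-1)$, Lemma~\ref{monotonicity} gives $h(3,m-2)>h(3,m-1)$. Concretely, at $m=7$ one has $(3,6)<_h(4,4)<_h(2,7)<_h(3,5)$.

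\textbf{Why.} Pairs with the same sum $k_1+k_2=m+t$ are componentwise incomparable. Lemma~\ref{monotonicity} alone can therefore never place them below $(2,m-1)$; you need the spreading move $(a,b)\to(a-1,b+1)$.

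\textbf{Repair.} Write $h(a,b)$ for $-\ln$ of the root of $1-z^a-z^b$, whether or not $(a,b)$ is admissible. Take $3\le k_1\le k_2$ with $s=k_1+k_2\ge m+1$, which is the condition $t\ge1$. Applying Lemma~\ref{polynomy_koren} $k_1-2$ times gives $h(k_1,k_2)<h(2,s-2)$. Then Lemma~\ref{monotonicity} with $s-2\ge m-1$ gives $h(2,s-2)\le h(2,m-1)$.

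Equivalently, by Lemma~\ref{horizont} every pair in row $t\ge2$ has entropy at most $h(t,m)$. Also $h(t,m)<h(2,m-1)$, because $(2,m-1)$ is componentwise strictly below $(t,m)$. Row $t=1$ is a chain ending in $(2,m-1)\to(1,m)$. This is exactly the reachability in $\mathcal{H}_m$ that the paper invokes: horizontal edges to the end of a row, then vertical edges down to $(2,m)\to(2,m-1)$.
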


\begin{proof}
By the construction of the pyramidal entropy diagrams (Figures~\ref{fig:pyramid-common}--\ref{fig:pyramid-even-ending}),
an oriented path in the diagram corresponds to a strict inequality with respect to $<_h$.
Hence, from each of the ten minima listed above there is a directed path to every other vertex of the diagram that is not among these minima,
which implies that each of them is $<_h$-smaller than all remaining digraphs in $\mathcal{SC}_{m+1}(m)$. 

Conversely, each of the two listed maxima is reachable from every other vertex, and therefore it is $<_h$-greater
than all remaining digraphs in $\mathcal{SC}_{m+1}(m)$.

The mutual relations among the minima and maxima are also apparent from the diagram.
\end{proof}

\begin{corollary}
    The segment consisting of the initial ten entropy minima and the two entropy maxima identified in Corollary~\ref{7_min} is maximal in the sense that it represents the largest set of digraphs in $\mathcal{SC}_{m+1}(m)$ whose relative $<_h$-ordering remains invariant for all $m \ge 2$.
\end{corollary}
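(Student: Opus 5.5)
The plan is to split the statement into two parts. The first is \emph{invariance}: the relative $<_h$-order of the ten minima and the two maxima does not depend on $m$. The second is \emph{maximality}: if the initial segment is extended by an eleventh element, or the terminal segment by a third element, the identity of that new element depends on $m$. Invariance is already contained in Corollary~\ref{7_min}. Its chains come from edges of types (H), (V) and $(\text{V}^*)$ of $\mathcal{H}_m$, and Theorem~\ref{thm:pyramidal_diagram} makes every such edge a strict inequality uniformly in $m$. The one thing to add is a convention for small $m$: when some of the listed pairs are not admissible, because a coordinate is $<1$ or the pairs coincide, only the admissible ones are kept, and their induced order is still the same. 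The real content is therefore maximality, and the plan is to locate the \emph{next} candidates via the diagram.

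For the initial segment, remove the ten minima from $\mathcal{H}_m$ and find the minimal elements of the remaining poset $(\mathcal{SC}_{m+1}(m),\preceq)$. Reading off Figure~\ref{fig:pyramid-common}, the row $t=m-6$ begins with $(m-3,m-3)\to(m-4,m-2)\to\cdots$, so $(m-4,m-2)$ has a predecessor outside the removed set. Its other predecessors are already removed, so $(m-3,m-3)$ is minimal. In row $t=m-5$ the chain $(m-3,m-2)\to(m-4,m-1)\to(m-5,m)$ shows that all predecessors of $(m-5,m)$ lie among the ten, so $(m-5,m)$ is also minimal. I would then check that every other remaining vertex is reachable from one of these two, using the (H) and (V) edges together with $(m-4,m-1)\to(m-3,m-3)$ from $(\text{V}^*)$. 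This shows that the eleventh smallest entropy in $\mathcal{SC}_{m+1}(m)$ is attained by $(m-3,m-3)$ or by $(m-5,m)$. Lemma~\ref{lem:poly_positive_at_z0}(4) shows that which of the two comes first is $(m-3,m-3)$ for $m=7$ and $(m-5,m)$ for every $m\ge 8$. Hence no eleventh element can be appended with an $m$-independent order.

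The terminal segment is handled symmetrically using Figures~\ref{fig:pyramid-odd-ending} and \ref{fig:pyramid-even-ending}. After deleting $(1,m)$ and $(2,m-1)$, the maximal elements of the remaining poset are $(2,m)$, at the end of row $t=2$, and $(3,m-2)$, at the end of row $t=1$ minus its last two entries. Every other vertex reaches one of them along horizontal and diagonal edges. Lemma~\ref{lem:poly_positive_at_z0}(5) shows that their order flips between $m=7$ and $m\ge 8$, so a third maximum cannot be added invariantly. Combining both parts gives maximality.

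The main obstacle I expect is the reachability bookkeeping: verifying that exactly these two elements are the candidates in each case, so that no third pair could sneak in as the next extremum for some $m$. A secondary issue is making precise what ``invariant for all $m\ge 2$'' means for small $m$, since the pairs involved in the flips only all exist once $m\ge 7$. I would state the convention explicitly, meaning comparison over all $m$ for which the pairs are admissible. For $m\le 7$ the orders can also be confirmed directly, since $\mathcal{SC}_{m+1}(m)$ is totally ordered there.
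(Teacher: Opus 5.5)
Your proposal is correct and follows essentially the same route as the paper, which reads off from Lemma~\ref{lem:poly_positive_at_z0}(4) and (5) that the eleventh minimum switches from $(m-3,m-3)$ at $m=7$ to $(m-5,m)$ for $m\ge 8$, and that the third maximum switches from $(3,m-2)$ to $(2,m)$. You add a step the paper leaves implicit: after deleting the ten minima (respectively the two maxima), these two pairs are the only minimal (respectively maximal) elements of $\mathcal{H}_m$. That step checks out, since all predecessors of $(m-5,m)$, namely $(m-4,m-1)$ and $(m-4,m)$, and all predecessors of $(m-3,m-3)$, namely $(m-3,m-2)$ and $(m-4,m-1)$, lie among the ten.
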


\begin{proof}
    By Lemma~\ref{lem:poly_positive_at_z0}, the eleventh entropy minimum is attained by the digraph $(m-3,m-3)$ when $m=7$, whereas for $m \ge 8$, the eleventh minimum is given by $(m-5,m)$. 
    According to the same lemma, the third entropy maximum is the digraph $(3,m-2)$ for $m=7$, while for $m \ge 8$, the third maximum is $(2,m)$. 
    This establishes the maximality of both the initial segment of entropy minima and the segment of entropy maxima.
\end{proof}

\begin{figure}[H]
\centering

\begin{subfigure}[t]{0.3\textwidth}
\centering
\input{min1.tikz}
\caption{$(m-1,m)$}
\label{fig:min1}
\end{subfigure}\hfill
\begin{subfigure}[t]{0.3\textwidth}
\centering
\input{min2.tikz}
\caption{$(m-1,m-1)$}
\label{fig:min2}
\end{subfigure}\hfill
\begin{subfigure}[t]{0.3\textwidth}
\centering
\input{min3.tikz}
\caption{$(m-2,m)$}
\label{fig:min3}
\end{subfigure}

\vspace{2mm}

\begin{subfigure}[t]{0.3\textwidth}
\centering
\input{min4.tikz}
\caption{$(m-2,m-1)$}
\label{fig:min4}
\end{subfigure}\hfill
\begin{subfigure}[t]{0.3\textwidth}
\centering
\input{min5.tikz}
\caption{$(m-3,m)$}
\label{fig:min5}
\end{subfigure}\hfill
\begin{subfigure}[t]{0.3\textwidth}
\centering
\input{min6.tikz}
\caption{$(m-2,m-2)$}
\label{fig:min6}
\end{subfigure}

\vspace{2mm}

\begin{subfigure}[t]{0.3\textwidth}
\centering
\input{min7.tikz}
\caption{$(m-3,m-1)$}
\label{fig:min7}
\end{subfigure}\hfill
\begin{subfigure}[t]{0.3\textwidth}
\centering
\input{min8.tikz}
\caption{$(m-4,m)$}
\label{fig:min2}
\end{subfigure}\hfill
\begin{subfigure}[t]{0.3\textwidth}
\centering
\input{min9.tikz}
\caption{$(m-3,m-2)$}
\label{fig:min2}
\end{subfigure}\hfill

\vspace{2mm}

\begin{subfigure}[t]{0.3\textwidth}
\centering
\input{min10.tikz}
\caption{$(m-4,m-1)$}
\label{fig:max2}
\end{subfigure}\hfill
\begin{subfigure}[t]{0.3\textwidth}
\centering
\input{max2.tikz}
\caption{$(2,m-1)$}
\label{fig:min2}
\end{subfigure}\hfill
\begin{subfigure}[t]{0.3\textwidth}
\centering
\input{max1.tikz}
\caption{$(1,m)$}
\label{fig:max1}
\end{subfigure}

\caption{Ten entropy minima ((a)--(j)) and two entropy maxima ((k),(l)) of the class $\mathcal{SC}_{m+1}(m)$, for fixed $m\geq 2$.}
\label{fig:entropy-extremals-3x3}
\end{figure}

\begin{corollary}
Let $m\in \{2,3,4,5,6,7\}$. Then the transitive reduction of $\mathcal{H}_m$ describes a total ordering of the class
$\mathcal{SC}_{m+1}(m)$ with respect to the relation $<_h$.
\end{corollary}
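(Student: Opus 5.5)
The plan is to show that, for each $m\in\{2,\dots,7\}$, the digraph $\mathcal{H}_m$ contains a directed \emph{Hamiltonian path}: a path visiting every vertex of $\mathcal{SC}_{m+1}(m)$ exactly once. Every edge of $\mathcal{H}_m$ strictly increases entropy by Theorem~\ref{thm:pyramidal_diagram}. So such a path lists the whole class in strictly increasing order of $h$. This gives a total order under $<_h$, and consecutive elements on the path are then exactly the covering pairs that survive in the transitive reduction.

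The first step is to enumerate the vertex set for each $m$ using Lemma~\ref{mnoziny_k_1k_2}. Its size is $\lfloor (m-1)^2/4\rfloor+m-1$, which equals $1,3,5,8,11,15$ for $m=2,\dots,7$. Each pair $(k_1,k_2)$ is tagged with its $t=k_1+k_2-m$, so that we can tell which edges are horizontal (H), vertical (V), or of type $(\text{V}^*)$.

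For $m\le 4$ the chain is immediate from (H) and (V) edges alone. For example, for $m=4$ it is
\[
(3,4)\to(3,3)\to(2,4)\to(2,3)\to(1,4).
\]
For $m=5$ the intended chain is
\[
(4,5)\to(4,4)\to(3,5)\to(3,4)\to(2,5)\to(3,3)\to(2,4)\to(1,5).
\]
Every step here is (H) or (V), except $(2,5)\to(3,3)$, which is Lemma~\ref{lem:poly_positive_at_z0}(1).

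For $m=6$ the intended chain is
\[
(5,6)\to(5,5)\to(4,6)\to(4,5)\to(3,6)\to(4,4)\to(3,5)\to(2,6)\to(3,4)\to(2,5)\to(1,6).
\]
This uses Lemma~\ref{lem:poly_positive_at_z0}(1) for $(3,6)\to(4,4)$ and part~(2) for $(2,6)\to(3,4)$. The steps $(4,4)\to(3,5)\to(2,6)$ lie in the $t=2$ horizontal chain, and $(3,4)\to(2,5)\to(1,6)$ lies in the $t=1$ chain.

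For $m=7$ the chain starts with the ten minima of Corollary~\ref{7_min}, ending at $(3,6)$. It then continues
\[
(3,6)\to(4,4)\to(2,7)\to(3,5)\to(2,6)\to(1,7).
\]
Here $(3,6)\to(4,4)$ is Lemma~\ref{lem:poly_positive_at_z0}(3), and $(4,4)\to(2,7)$ and $(2,7)\to(3,5)$ are parts~(4) and~(5) for $m=7$. The final steps $(3,5)\to(2,6)\to(1,7)$ are horizontal in $t=1$.

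For each step I will check that the two pairs really are joined by an edge of the stated type, or by a proven $<_h$ relation. I will also check that all listed pairs are admissible and pairwise distinct for the given $m$. For small $m$ some of the generic labels, such as $(m-4,m)$, are not admissible, so the listing has to be done concretely rather than symbolically.

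The main obstacle is $m=7$. There the general diagram edges do not by themselves compare $(4,4)$, $(2,7)$ and $(3,5)$, and the chain depends on the exceptional inequalities for $m=7$ from Lemma~\ref{lem:poly_positive_at_z0}(4),(5). Those inequalities are exactly what fails for $m\ge 8$, which is why the corollary stops at $7$.
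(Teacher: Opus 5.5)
Your chains are correct and follow the paper's argument. For $m\le 6$ the paper reads off the same orders from Corollary~\ref{7_min}, and for $m=7$ it places $(4,4)<_h(2,7)<_h(3,5)$ between $(3,6)$ and $(2,6)$ via Lemma~\ref{lem:poly_positive_at_z0}, just as you do; your step-by-step check is in fact more careful for small $m$, where generic labels such as $(m-3,m-2)$ are not admissible.

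One correction to your framing: $(4,4)\to(2,7)$ and $(2,7)\to(3,5)$ are not edges of $\mathcal{H}_7$, and in $\mathcal{H}_7$ the vertex $(2,7)$ is reachability-incomparable with both. So for $m=7$ you get a Hamiltonian chain only after adding the relations of Lemma~\ref{lem:poly_positive_at_z0}(4),(5), and what you (and the paper) actually prove is that $<_h$ is total on $\mathcal{SC}_{8}(7)$, not that $\mathcal{H}_7$ itself has a Hamiltonian path or a chain-shaped transitive reduction.
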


\begin{proof}
For $m=2$ there is only one digraph.
For $m\in\{3,4,5\}$ all digraphs correspond to a single minimum from Corollary~\ref{7_min}, and hence they are totally ordered.
For $m=6$ there are exactly $11$ pairwise non-isomorphic digraphs: ten of them correspond to the ten minima in Corollary~\ref{7_min}, and the remaining one corresponds to the maximum from the same corollary.
For $m=7$, there exist exactly $15$ pairwise non-isomorphic digraphs. Ten of these correspond to the minima and two correspond to the maxima identified in Corollary~\ref{7_min}. It remains to compare the three digraphs $(2,7)$, $(4,4)$, and $(3,5)$ and determine their relationship to the tenth minimum $(3,6)$ and the second maximum $(2,6)$. Lemma \ref{lem:poly_positive_at_z0} yields
\[ 
(4,4) <_h (2,7) <_h (3,5),
 \]
while it also holds that $(3,6) <_h (3,5)$ and $(2,7) <_h (2,6)$. Therefore,
\[ 
(3,6) <_h (4,4) <_h (2,7) <_h (3,5) <_h (2,6),
 \]
and the ordering is total
\end{proof}

\begin{corollary}
Let $m\ge 3$ and let $\mathcal{SC}^{h>0}(m)$ be defined as above. Then the butterfly digraph
$\mathcal{B}^{\,m-2}_{m-1,m-1}$ satisfies
\[
h\!\left(\mathcal{B}^{\,m-2}_{m-1,m-1}\right)<h(G)
\quad\text{for every}\quad
G\in \mathcal{SC}^{h>0}(m)\setminus\left\{\mathcal{B}^{\,m-1}_{m-1,m},\,\mathcal{B}^{\,m-2}_{m-1,m-1}\right\}.
\]
In other words, $\mathcal{B}^{\,m-2}_{m-1,m-1}$ attains the second smallest positive topological entropy in the class
$\mathcal{SC}^{h>0}(m)$.
\end{corollary}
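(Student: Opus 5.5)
We prove the claim by splitting $\mathcal{SC}^{h>0}(m)$ by edge count, as in the proof of Theorem~\ref{thm_global_minimum_motyl}. By Lemma~\ref{lem:R_bounds}, every $G\in\mathcal{SC}^{h>0}(m)$ has $m+k$ edges for some $k\ge 1$, so
\[
\mathcal{SC}^{h>0}(m)=\mathcal{SC}_{m+1}(m)\cup\bigcup_{k\ge 2}\mathcal{SC}_{m+k}(m).
\]
We handle the two parts separately and then combine them.

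\emph{Part one: digraphs with $m+k$ edges, $k\ge 2$.} Here we use Lemma~\ref{srovnani_m+k_hran_motyl}, which requires $m\ge 3$ (exactly our hypothesis). It gives
\[
h(G)>\frac{\ln 2}{m-1}\qquad\text{for every } G\in\mathcal{SC}_{m+k}(m),\ k\ge 2.
\]
By Corollary~\ref{cor:butterfly_t_irrelevant} with $k_1=k_2=m-1$, we have $h\bigl(\mathcal{B}^{\,m-2}_{m-1,m-1}\bigr)=\frac{\ln 2}{m-1}$. Since $m-1+m-1-(m-2)=m$, this digraph is admissible of order $m$, and $t=m-2\ge 1$ because $m\ge 3$. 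Hence every digraph with at least $m+2$ edges has strictly larger entropy than $\mathcal{B}^{\,m-2}_{m-1,m-1}$.

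\emph{Part two: digraphs with $m+1$ edges.} By Lemma~\ref{lem:m+1_is_butterfly} and Lemma~\ref{mnoziny_k_1k_2}, every $G\in\mathcal{SC}_{m+1}(m)$ is a pair $(k_1,k_2)$ with $k_1\le k_2\le m$ and $k_1\le m-1$ (Lemma~\ref{t_omez}). Its entropy is $-\ln$ of the root of $1-z^{k_1}-z^{k_2}$ (Corollary~\ref{polynom_m+1}). We exclude $(m-1,m)$ and $(m-1,m-1)$, and split the remaining pairs into two cases.
\begin{itemize}
\item If $k_2\le m-1$, then $k_1\le m-1$ and $k_2\le m-1$, and $(k_1,k_2)\ne(m-1,m-1)$. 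Lemma~\ref{monotonicity}, applied with $(n_1,n_2)=(k_1,k_2)$ and $(m_1,m_2)=(m-1,m-1)$, gives a root strictly smaller than $2^{-1/(m-1)}$. Hence the entropy is strictly larger.
\item If $k_2=m$, then $k_1\le m-2$. Horizontal monotonicity (Lemma~\ref{polynomy_koren}) gives
\[
1-z^{k_1}-z^{m}\le 1-z^{m-2}-z^{m}<1-z^{m-1}-z^{m-1}
\]
on $(0,1)$. The second inequality is Lemma~\ref{polynomy_koren} with $n_1=n_2=m-1$. The first inequality comes either from Lemma~\ref{monotonicity} or is an equality. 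Evaluating at the root of $1-z^{k_1}-z^m$ shows that $1-2z^{m-1}$ is positive there, so that root lies below $2^{-1/(m-1)}$, and the entropy is again strictly larger.
\end{itemize}
The case $k_2=m$ is where a cross-$t$ comparison is needed, but the chain above settles it cleanly. Alternatively, one can read off from Corollary~\ref{7_min} that $(m-1,m-1)$ is the second minimum of $\mathcal{SC}_{m+1}(m)$; for small $m$ the list there simply contains fewer pairs.

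\emph{Combining the parts.} Theorem~\ref{thm_global_minimum_motyl} shows that $\mathcal{B}^{\,m-1}_{m-1,m}$ is the unique minimizer. Parts one and two show that every other digraph except $\mathcal{B}^{\,m-2}_{m-1,m-1}$ has strictly larger entropy than $\mathcal{B}^{\,m-2}_{m-1,m-1}$. Together these give the strict inequality in the statement, so $\mathcal{B}^{\,m-2}_{m-1,m-1}$ attains the second smallest positive entropy. The main point requiring care is the range $m\ge 3$: it is needed for the strict bound in Lemma~\ref{srovnani_m+k_hran_motyl} and for the admissibility of $t=m-2\ge 1$. For $m=2$ the pair $(1,1)$ would need $t=0$, which is excluded by the statement.
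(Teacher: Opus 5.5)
Your proof is correct. Part one and the final combination match the paper's argument, which also uses Lemma~\ref{srovnani_m+k_hran_motyl} together with $h(\mathcal{B}^{\,m-2}_{m-1,m-1})=\frac{\ln 2}{m-1}$ from Corollary~\ref{cor:butterfly_t_irrelevant}.

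The difference lies in the class $\mathcal{SC}_{m+1}(m)$. There the paper simply invokes Corollary~\ref{7_min}, reading off from the pyramidal diagram $\mathcal{H}_m$ that $(m-1,m-1)$ is the second minimum. You instead verify this directly. Every pair with $k_2\le m-1$ is coordinatewise dominated by $(m-1,m-1)$. Every pair $(t,m)$ with $t\le m-2$ is coordinatewise dominated by $(m-2,m)$, which lies below $(m-1,m-1)$ by the single horizontal step of Lemma~\ref{polynomy_koren}.

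Each route has its merits. The paper's is shorter once the diagram is available, and it fits the narrative of obtaining the second minimizer as a by-product of $\mathcal{H}_m$. Yours is self-contained and needs only Lemmas~\ref{monotonicity} and~\ref{polynomy_koren}. It also shows that $(m-2,m)$ versus $(m-1,m-1)$ is the only comparison that is not coordinatewise. Finally, it works uniformly for every $m\ge 3$. By contrast, Corollary~\ref{7_min} is justified by inspecting figures, and its list contains pairs such as $(m-3,m)$ that are not admissible when $m=3$.

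One minor point: your first inequality $1-z^{k_1}-z^{m}\le 1-z^{m-2}-z^{m}$ is just the pointwise bound $z^{k_1}\ge z^{m-2}$ on $(0,1)$. It does not come from Lemma~\ref{monotonicity}, which is a statement about roots rather than pointwise values.
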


\begin{proof}
By Corollary~\ref{7_min}, the type $(m-1,m-1)$ attains the second minimum within the class
$\mathcal{SC}_{m+1}(m)$.
Moreover, in Corollary~\ref{cor:butterfly_t_irrelevant} we showed that
\[
h\!\left(\mathcal{B}^{\,m-2}_{m-1,m-1}\right)=\frac{\ln 2}{m-1},
\]
and by Lemma~\ref{srovnani_m+k_hran_motyl}, for every
$G\in \mathcal{SC}_{m+k}(m)$ with $k\ge 2$,
\[
h(G)>\frac{\ln 2}{m-1}.
\]
Combining these facts yields the claim.
\end{proof}

\section{Conclusion}
Using a generating-function approach, we recover the known first and
second positive-entropy minimizers from the previously studied
loopless spectral setting and provide new analytic--combinatorial
proofs of their extremality and uniqueness. Our formulation also
allows loops and covers the relevant small orders not included in
the previous results.

The principal new contributions concern the internal entropy
structure of the class $\mathcal{SC}_{m+1}(m)$. We introduce a unified
$(t,k_1,k_2)$-butterfly parametrization and show that the topological
polynomial, and hence the entropy, can be read directly from the pair
$(k_1,k_2)$. For fixed $m$, the entropy is a complete invariant up to
isomorphism within this class. We also derive an explicit formula for
the minimum order required to realize a positive entropy not exceeding
a prescribed threshold.

A key outcome of our structural analysis is the reachability order
encoded by the Pyramidal Entropy Diagram. We show that the entropy
ordering is total for $m\leq 7$ and undergoes its first structural
bifurcation at $m=8$, where the relative order of certain butterfly
types reverses. We further determine maximal stable initial and
terminal segments, consisting of ten entropy minima and two entropy
maxima, respectively. Their relative ordering is independent of $m$,
whereas no larger initial or terminal segment has this stability.

\section{Acknowledgements}
Research was funded by institutional support for the development of research organisations (I\v{C} 47813059) and by Grant SGS 16/2024.

\bibliographystyle{plain}
\bibliography{refs}
\pagebreak

\section{Appendix}
\subsection{Proof of Lemma~\ref{lem:poly_positive_at_z0}}
\begin{enumerate}
\item \begin{proof}
Let $z_0=2^{-1/(m-2)}$ be the root of the polynomial $1-2z^{m-2}$, that is, $1=2z_0^{m-2}$.
We evaluate the polynomial $1-z^{m-3}-z^{m}$ at $z_0$:
\begin{align*}
    1-z_0^{m-3}-z_0^{m}
    &=2z_0^{m-2}-z_0^{m-3}-z_0^{m}\\
    &=z_0^{m-3}\bigl(2z_0-1-z_0^3\bigr)\\
    &=z_0^{m-3}(1-z_0)(z_0^2+z_0-1).
\end{align*}
Since $z_0\in(0,1)$, the sign is determined solely by the factor $z_0^2+z_0-1$.
The polynomial $z^2+z-1$ has a unique positive root $\frac{\sqrt5-1}{2}$, so it suffices to verify that
$z_0>\frac{\sqrt5-1}{2}$.
Set $z_0(m)=2^{-1/(m-2)}$. For $m\ge 5$ we have
\[
z_0(5)=2^{-1/3}>\frac{\sqrt5-1}{2}.
\]
Moreover, $z_0(m)$ is increasing in $m$, hence $z_0(m)\ge z_0(5)$ for all $m\ge 5$.
Therefore $z_0(m)>\frac{\sqrt5-1}{2}$ for every $m\ge 5$.
\end{proof}

\item \begin{proof}
Let $z_0\in(0,1)$ satisfy $1-z_0^{m-3}-z_0^{m-2}=0$. Then $1=z_0^{m-3}+z_0^{m-2}$.
We evaluate the polynomial $1-z^{m-4}-z^{m-1}$ at $z_0$:
\begin{align*}
    1-z_0^{m-4}-z_0^{m-1}
    &=z_0^{m-3}+z_0^{m-2}-z_0^{m-4}-z_0^{m-1}\\
    &=z_0^{m-4}\bigl(z_0+z_0^2-1-z_0^4\bigr)\\
    &=z_0^{m-4}(1-z_0)(z_0^3+z_0^2-1).
\end{align*}
Since $z_0^{m-4}>0$ and $1-z_0>0$ on $(0,1)$, the sign is determined solely by the factor
$z_0^3+z_0^2-1$.
Because $m\ge 6$, we have $m-3\ge 3$ and $m-2\ge 4$, hence
\[
1=z_0^{m-3}+z_0^{m-2}\le z_0^3+z_0^4<z_0^3+z_0^2,
\]
which implies $z_0^3+z_0^2-1>0$.
\end{proof}

\item \begin{proof}
Let $z_0=2^{-1/(m-3)}$ be the root of the polynomial $1-2z^{m-3}$, that is, $1=2z_0^{m-3}$.
Again, we evaluate the polynomial $1-z^{m-4}-z^{m-1}$ at $z_0$:
\begin{align*}
    1-z_0^{m-4}-z_0^{m-1}
    &=2z_0^{m-3}-z_0^{m-4}-z_0^{m-1}\\
    &=z_0^{m-4}\bigl(2z_0-1-z_0^3\bigr)\\
    &=z_0^{m-4}(1-z_0)(z_0^2+z_0-1).
\end{align*}
Since $z_0\in(0,1)$, the sign is determined solely by the factor $z_0^2+z_0-1$.
The polynomial $z^2+z-1$ has a unique positive root $\frac{\sqrt5-1}{2}$, so it suffices to verify that
$z_0>\frac{\sqrt5-1}{2}$.
Set $z_0(m)=2^{-1/(m-3)}$. For $m\ge 7$ we have
\[
z_0(7)=2^{-1/4}>\frac{\sqrt5-1}{2}.
\]
Moreover, $z_0(m)$ is increasing in $m$, hence $z_0(m)\ge z_0(7)$ for all $m\ge 7$.
Therefore $z_0(m)>\frac{\sqrt5-1}{2}$ for every $m\ge 7$.
\end{proof}

\item \begin{proof}
Let $z_0=2^{-1/(m-3)}$ be the root of the polynomial $1-2z^{m-3}$, that is, $1=2z_0^{m-3}$.
Again, we evaluate the polynomial $1-z^{m-5}-z^{m}$ at $z_0$:
\begin{align*}
    1-z_0^{m-5}-z_0^{m}
    &=2z_0^{m-3}-z_0^{m-5}-z_0^{m}\\
    &=z_0^{m-5}\bigl(2z_0^2-1-z_0^5\bigr)\\
    &=z_0^{m-5}(1-z_0)\bigl(z_0^4+z_0^3+z_0^2-z_0-1\bigr).
\end{align*}
Since $z_0\in(0,1)$, the sign is determined solely by the factor
\[
f(z_0):=z_0^4+z_0^3+z_0^2-z_0-1.
\]
Set $z_0(m)=2^{-1/(m-3)}$. Direct evaluations yield $f(z_0(7))<0$ and $f(z_0(8))>0$.
Furthermore, since $z_0(8)=2^{-1/5}>\tfrac12$, since $z_0(m)$ is increasing in $m$, and since $f$ is increasing on
$[1/2,1)$, we obtain $f(z_0(m))>0$ for all $m\ge 8$.
\end{proof}
\item \begin{proof}
    Let $z_0 \in (0, 1)$ be the unique positive root of the polynomial $1 - z^3 - z^{m-2}$, which implies the identity $z_0^{m-2} = 1 - z_0^3$. To determine the sign of the polynomial $1 - z^2 - z^m$ at this root, we evaluate:
    \begin{align*}
        1 - z_0^2 - z_0^m &= 1 - z_0^2 - z_0^2(1 - z_0^3) \\ 
        &= z_0^5 - 2z_0^2 + 1 \\
        &=(1 - z_0)(1 + z_0 - z_0^2 - z_0^3 - z_0^4).
    \end{align*}
    Since $z_0 \in (0, 1)$, the sign of the expression is uniquely determined by the factor$$g(z_0) := 1 + z_0 - z_0^2 - z_0^3 - z_0^4.$$ For the case $m=7$, we utilize the identity $z_0^5 = 1 - z_0^3$ to obtain\[ g(z_0) = (1 - z_0^3) + z_0 - z_0^2 - z_0^4 = z_0^5 + z_0 - z_0^2 - z_0^4 = z_0(z_0^3 - 1)(z_0 - 1) > 0. \]
    
    In contrast, let $z_0(m)$ denote the root of the polynomial $1 - z^3 - z^{m-2} = 0$. For $m=8$, identifying $z_0(8)$ requires solving $1 - z^3 - z^6 = 0$ as a quadratic in $z^3$, which yields the exact value $$z_0(8) = \sqrt[3]{\frac{\sqrt{5}-1}{2}}.$$ Direct substitution shows that $g(z_0(8)) < 0$. Since $z_0(m)$ is a strictly increasing function of $m$ and $g(z)$ is strictly decreasing on the interval $[1/2, 1)$, it follows that $g(z_0(m)) < 0$ for all $m \ge 8$.
\end{proof}
\end{enumerate}

\end{document}

%% file: min1.tikz
\begin{tikzpicture}[scale=1.5,
  every node/.style={
    circle,
    draw,
    minimum size=6mm, 
    inner sep=0.2pt
  }
]
  \node (A) at (0,0) {};
  \node (B) at (1,0) {};
  \node (D) at (3,0) {};
  \node[
    shape=circle,
    minimum size=6mm,
    draw=none,
    inner sep=0pt
  ] (dotsM) at ($(B)!0.5!(D)$) {$\cdots$};
  \node (T) at (1.5,0.95) {};
  \draw[->] (A) -- (T);
  \draw[->] (T) -- (D);
  \draw[->] (A) to[bend right=45] (D);
  \draw[->] (D) -- (dotsM);
  \draw[->] (dotsM) -- (B);
  \draw[->] (B) -- (A);
\end{tikzpicture}

%% file: min2.tikz
\begin{tikzpicture}[scale=1.5,
  every node/.style={
    circle,
    draw,
    minimum size=6mm,
    inner sep=0.2pt
  }
]
  \node (A) at (0,0) {};
  \node (B) at (1,0) {};
  \node (D) at (3,0) {};
  \node[
    shape=circle,
    minimum size=6mm,
    draw=none,
    inner sep=0pt
  ] (dotsM) at ($(B)!0.5!(D)$) {$\cdots$};
  \node (T) at (1.5,0.95) {};
  \node (S) at (1.5,-0.95) {};
  \draw[->] (A) -- (T);
  \draw[->] (T) -- (D);
  \draw[->] (A) -- (S);
  \draw[->] (S) -- (D);
  \draw[->] (D) -- (dotsM);
  \draw[->] (dotsM) -- (B);
  \draw[->] (B) -- (A);

\end{tikzpicture}

%% file: min3.tikz
\begin{tikzpicture}[scale=1.5,
  every node/.style={
    circle,
    draw,
    minimum size=6mm,
    inner sep=0.2pt
  }
]
  \node (A) at (0,0) {};
  \node (B) at (1,0) {};
  \node (D) at (3,0) {};
  \node[
    shape=circle,
    minimum size=6mm,
    draw=none,
    inner sep=0pt
  ] (dotsM) at ($(B)!0.5!(D)$) {$\cdots$};
  \node (T1) at (1,0.95) {};
  \node (T2) at (2,0.95) {};
  \draw[->] (A) -- (T1);
  \draw[->] (T1) -- (T2);
  \draw[->] (T2) -- (D);
  \draw[->] (A) to[bend right=45] (D);
  \draw[->] (D) -- (dotsM);
  \draw[->] (dotsM) -- (B);
  \draw[->] (B) -- (A);
\end{tikzpicture}

%% file: min4.tikz
\begin{tikzpicture}[scale=1.5,
  every node/.style={circle,draw,minimum size=6mm,inner sep=0.2pt}
]
  \node (A) at (0,0) {};
  \node (B) at (1,0) {};
  \node (D) at (3,0) {};
  \node[draw=none, inner sep=0pt, minimum size=6mm] (dotsM) at ($(B)!0.5!(D)$) {$\cdots$};
  \def\overhang{0} 
  \def\yTop{0.95}
  \def\yBot{-0.95}
  \def\nTop{2}
  \def\nBot{1}
  \pgfmathsetmacro{\L}{3 + 2*\overhang}
  \pgfmathsetmacro{\xLeft}{0 - \overhang}
  \foreach \i in {1,...,\nTop} {
    \pgfmathsetmacro{\xi}{\xLeft + \i*\L/(\nTop+1)}
    \node (T\i) at (\xi,\yTop) {};
  }
  \foreach \i in {1,...,\nBot} {
    \pgfmathsetmacro{\xi}{\xLeft + \i*\L/(\nBot+1)}
    \node (S\i) at (\xi,\yBot) {};
  }
  \draw[->] (A) -- (T1);
  \foreach \i [evaluate=\i as \j using int(\i+1)] in {1,...,\numexpr\nTop-1\relax} {
    \draw[->] (T\i) -- (T\j);
  }
  \draw[->] (T\nTop) -- (D);
  \draw[->] (A) -- (S1);
  \draw[->] (S\nBot) -- (D);
  \draw[->] (D) -- (dotsM);
  \draw[->] (dotsM) -- (B);
  \draw[->] (B) -- (A);
\end{tikzpicture}

%% file: min5.tikz
\begin{tikzpicture}[scale=1.5,
  every node/.style={
    circle, draw,
    minimum size=6mm,
    inner sep=0.2pt
  }
]
  \node (A) at (0,0) {};
  \node (B) at (1,0) {};
  \node (D) at (3,0) {};
  \node[draw=none, inner sep=0pt, minimum size=6mm] (dotsM)
    at ($(B)!0.5!(D)$) {$\cdots$};
  \def\nTop{3}       
  \def\yTop{0.95}    
  \def\overhang{0}  

  \pgfmathsetmacro{\L}{3 + 2*\overhang}         
  \pgfmathsetmacro{\xLeft}{0 - \overhang}      
  \foreach \i in {1,...,\nTop} {
    \pgfmathsetmacro{\xi}{\xLeft + \i*\L/(\nTop+1)}
    \node (T\i) at (\xi,\yTop) {};
  }
  \draw[->] (A) -- (T1);
  \foreach \i [evaluate=\i as \j using int(\i+1)] in {1,...,\numexpr\nTop-1\relax} {
    \draw[->] (T\i) -- (T\j);
  }
  \draw[->] (T\nTop) -- (D);
  \draw[->] (A) to[bend right=45] (D);
  \draw[->] (D) -- (dotsM);
  \draw[->] (dotsM) -- (B);
  \draw[->] (B) -- (A);
\end{tikzpicture}

%% file: min6.tikz
\begin{tikzpicture}[scale=1.5,
  every node/.style={circle,draw,minimum size=6mm,inner sep=0.2pt}
]
  \node (A) at (0,0) {};
  \node (B) at (1,0) {};
  \node (D) at (3,0) {};
  \node[draw=none, inner sep=0pt, minimum size=6mm] (dotsM) at ($(B)!0.5!(D)$) {$\cdots$};
  \def\overhang{0}
  \def\yTop{0.95}
  \def\yBot{-0.95}
  \def\nTop{2}
  \def\nBot{2}
  \pgfmathsetmacro{\L}{3 + 2*\overhang}
  \pgfmathsetmacro{\xLeft}{0 - \overhang}
  \foreach \i in {1,...,\nTop} {
    \pgfmathsetmacro{\xi}{\xLeft + \i*\L/(\nTop+1)}
    \node (T\i) at (\xi,\yTop) {};
  }
  \foreach \i in {1,...,\nBot} {
    \pgfmathsetmacro{\xi}{\xLeft + \i*\L/(\nBot+1)}
    \node (S\i) at (\xi,\yBot) {};
  }
  \draw[->] (A) -- (T1);
  \foreach \i [evaluate=\i as \j using int(\i+1)] in {1,...,\numexpr\nTop-1\relax} {
    \draw[->] (T\i) -- (T\j);
  }
  \draw[->] (T\nTop) -- (D);
  \draw[->] (A) -- (S1);
  \foreach \i [evaluate=\i as \j using int(\i+1)] in {1,...,\numexpr\nBot-1\relax} {
    \draw[->] (S\i) -- (S\j);
  }
  \draw[->] (S\nBot) -- (D);
  \draw[->] (D) -- (dotsM);
  \draw[->] (dotsM) -- (B);
  \draw[->] (B) -- (A);
\end{tikzpicture}

%% file: min7.tikz
\begin{tikzpicture}[scale=1.5,
  every node/.style={circle,draw,minimum size=6mm,inner sep=0.2pt}
]
  \node (A) at (0,0) {};
  \node (B) at (1,0) {};
  \node (D) at (3,0) {};
  \node[draw=none, inner sep=0pt, minimum size=6mm] (dotsM) at ($(B)!0.5!(D)$) {$\cdots$};
  \def\overhang{0}
  \def\yTop{0.95}
  \def\yBot{-0.95}
  \def\nTop{3}
  \def\nBot{1}
  \pgfmathsetmacro{\L}{3 + 2*\overhang}
  \pgfmathsetmacro{\xLeft}{0 - \overhang}
  \foreach \i in {1,...,\nTop} {
    \pgfmathsetmacro{\xi}{\xLeft + \i*\L/(\nTop+1)}
    \node (T\i) at (\xi,\yTop) {};
  }
  \foreach \i in {1,...,\nBot} {
    \pgfmathsetmacro{\xi}{\xLeft + \i*\L/(\nBot+1)}
    \node (S\i) at (\xi,\yBot) {};
  }
  \draw[->] (A) -- (T1);
  \foreach \i [evaluate=\i as \j using int(\i+1)] in {1,...,\numexpr\nTop-1\relax} {
    \draw[->] (T\i) -- (T\j);
  }
  \draw[->] (T\nTop) -- (D);
  \draw[->] (A) -- (S1);
  \draw[->] (S\nBot) -- (D);
  \draw[->] (D) -- (dotsM);
  \draw[->] (dotsM) -- (B);
  \draw[->] (B) -- (A);
\end{tikzpicture}

%% file: min8.tikz
\begin{tikzpicture}[scale=1.5,
  every node/.style={circle,draw,minimum size=6mm,inner sep=0.2pt}
]
  \node (A) at (0,0) {};
  \node (B) at (1,0) {};
  \node (D) at (3,0) {};
  \node[draw=none, inner sep=0pt, minimum size=6mm] (dotsM) at ($(B)!0.5!(D)$) {$\cdots$};
  \def\overhang{0}  
  \def\yTop{0.95}
  \def\nTop{4}
  \pgfmathsetmacro{\L}{3 + 2*\overhang}
  \pgfmathsetmacro{\xLeft}{0 - \overhang}
  \foreach \i in {1,...,\nTop} {
    \pgfmathsetmacro{\xi}{\xLeft + \i*\L/(\nTop+1)}
    \node (T\i) at (\xi,\yTop) {};
  }
  \draw[->] (A) -- (T1);
  \foreach \i [evaluate=\i as \j using int(\i+1)] in {1,...,\numexpr\nTop-1\relax} {
    \draw[->] (T\i) -- (T\j);
  }
  \draw[->] (T\nTop) -- (D);
  \draw[->, bend right=45] (A) to (D);
  \draw[->] (D) -- (dotsM);
  \draw[->] (dotsM) -- (B);
  \draw[->] (B) -- (A);
\end{tikzpicture}

%% file: min9.tikz
\begin{tikzpicture}[scale=1.5,
  every node/.style={circle,draw,minimum size=6mm,inner sep=0.2pt}
]
  \node (A) at (0,0) {};
  \node (B) at (1,0) {};
  \node (D) at (3,0) {};
  \node[draw=none, inner sep=0pt, minimum size=6mm] (dotsM) at ($(B)!0.5!(D)$) {$\cdots$};
  \def\overhang{0} 
  \def\yTop{0.95}
  \def\yBot{-0.95}
  \def\nTop{3}
  \def\nBot{2}
  \pgfmathsetmacro{\L}{3 + 2*\overhang}
  \pgfmathsetmacro{\xLeft}{0 - \overhang}
  \foreach \i in {1,...,\nTop} {
    \pgfmathsetmacro{\xi}{\xLeft + \i*\L/(\nTop+1)}
    \node (T\i) at (\xi,\yTop) {};
  }
  \foreach \i in {1,...,\nBot} {
    \pgfmathsetmacro{\xi}{\xLeft + \i*\L/(\nBot+1)}
    \node (S\i) at (\xi,\yBot) {};
  }
  \draw[->] (A) -- (T1);
  \foreach \i [evaluate=\i as \j using int(\i+1)] in {1,...,\numexpr\nTop-1\relax} {
    \draw[->] (T\i) -- (T\j);
  }
  \draw[->] (T\nTop) -- (D);
  \draw[->] (A) -- (S1);
  \foreach \i [evaluate=\i as \j using int(\i+1)] in {1,...,\numexpr\nBot-1\relax} {
    \draw[->] (S\i) -- (S\j);
  }
  \draw[->] (S\nBot) -- (D);
  \draw[->] (D) -- (dotsM);
  \draw[->] (dotsM) -- (B);
  \draw[->] (B) -- (A);
\end{tikzpicture}

%% file: min10.tikz
\begin{tikzpicture}[scale=1.5,
  every node/.style={circle,draw,minimum size=6mm,inner sep=0.2pt}
]
  \node (A) at (0,0) {};
  \node (B) at (1,0) {};
  \node (D) at (3,0) {};
  \node[draw=none, inner sep=0pt, minimum size=6mm] (dotsM) at ($(B)!0.5!(D)$) {$\cdots$};
  \def\overhang{0}  
  \def\yTop{0.95}
  \def\yBot{-0.95}
  \def\nTop{4}
  \def\nBot{1}
  \pgfmathsetmacro{\L}{3 + 2*\overhang}
  \pgfmathsetmacro{\xLeft}{0 - \overhang}
  \foreach \i in {1,...,\nTop} {
    \pgfmathsetmacro{\xi}{\xLeft + \i*\L/(\nTop+1)}
    \node (T\i) at (\xi,\yTop) {};
  }
  \foreach \i in {1,...,\nBot} {
    \pgfmathsetmacro{\xi}{\xLeft + \i*\L/(\nBot+1)}
    \node (S\i) at (\xi,\yBot) {};
  }
  \draw[->] (A) -- (T1);
  \foreach \i [evaluate=\i as \j using int(\i+1)] in {1,...,\numexpr\nTop-1\relax} {
    \draw[->] (T\i) -- (T\j);
  }
  \draw[->] (T\nTop) -- (D);
  \draw[->] (A) -- (S1);
  \draw[->] (S\nBot) -- (D);
  \draw[->] (D) -- (dotsM);
  \draw[->] (dotsM) -- (B);
  \draw[->] (B) -- (A);
\end{tikzpicture}

%% file: max2.tikz
\begin{tikzpicture}[scale=1.5,
  every node/.style={circle,draw,minimum size=6mm,inner sep=0.2pt}
]
  \def\a{1.2}   
  \def\yTop{0.95}
  \def\yBot{-0.95}
  \node (C) at (0,0) {};
  \node (T1) at (-\a,\yTop) {};
  \node (T2) at ( \a,\yTop) {};
  \node[draw=none, inner sep=0pt, minimum size=6mm] (dotsT) at (0,\yTop) {$\cdots$};
  \draw[->] (C)   -- (T1);
  \draw[->] (T1)  -- (dotsT);
  \draw[->] (dotsT) -- (T2);
  \draw[->] (T2)  -- (C);
  \node (S) at (0,\yBot) {};
  \draw[->] (C) to[out=-120,in=120,looseness=1.1] (S);
  \draw[->] (S) to[out=60,in=-60,looseness=1.1] (C);
\end{tikzpicture}

%% file: max1.tikz
\begin{tikzpicture}[scale=1.5,
  every node/.style={circle,draw,minimum size=6mm,inner sep=0.2pt}
]
  \def\a{1.2}
  \def\yTop{0.95}
  \node (C) at (0,0) {};
  \node (T1) at (-\a,\yTop) {};
  \node (T2) at ( \a,\yTop) {};
  \node[draw=none, inner sep=0pt, minimum size=6mm] (dotsT) at (0,\yTop) {$\cdots$};
  \draw[->] (C) -- (T1);
  \draw[->] (T1) -- (dotsT);
  \draw[->] (dotsT) -- (T2);
  \draw[->] (T2) -- (C);
  \draw[->] (C) to[out=-135,in=-45,looseness=8] (C);
\end{tikzpicture}